\newtheorem{thm}{Theorem}[section]
\newtheorem{lem}[thm]{Lemma}
\theoremstyle{definition}
\newtheorem{rem}[thm]{Remark}
\begin{document}

\begin{frontmatter}
\title{Asymptotically Minimax Prediction \\ in Infinite Sequence Models}
\runtitle{Prediction in Infinite Sequence Models}
\begin{aug}
	\author{Keisuke Yano \corref{}\ead[label=e1]{yano@mist.i.u-tokyo.ac.jp}}\\
	\affiliation{The University of Tokyo}
	\address{Department of Mathematical Informatics\\
	Graduate School of Information Science and Technology\\
	The University of Tokyo\\
	7-3-1 Hongo, Bunkyo-ku, Tokyo 113-8656, Japan}
	\printead{e1}\\
\and
\author{Fumiyasu Komaki \ead[label=e2]{komaki@mist.i.u-tokyo.ac.jp}}\\
\affiliation{The University of Tokyo}
\address{Department of Mathematical Informatics\\
	Graduate School of Information Science and Technology\\
	The University of Tokyo\\
	7-3-1 Hongo, Bunkyo-ku, Tokyo 113-8656, Japan}
\printead{e2}\\
\address{RIKEN Brain Science Institute\\
2-1 Hirosawa, Wako City, Saitama 351-0198, Japan}
\runauthor{K. Yano and F. Komaki}
\end{aug}

\begin{abstract}
We study asymptotically minimax predictive distributions in infinite sequence models.
First,
we discuss the connection between prediction in an infinite sequence model and prediction in a function model.
Second,
we construct an asymptotically minimax predictive distribution for the setting in which the parameter space is a known ellipsoid.
We show that the Bayesian predictive distribution based on the Gaussian prior distribution is asymptotically minimax in the ellipsoid.
Third, we construct an asymptotically minimax predictive distribution for any Sobolev ellipsoid.
We show that the Bayesian predictive distribution based on the product of Stein's priors is asymptotically minimax for any Sobolev ellipsoid.
Finally, we present an efficient sampling method from the proposed Bayesian predictive distribution.
\end{abstract}
\begin{keyword}[class=MSC]
	\kwd{62C10; 62C20; 62G20}
\end{keyword}

\begin{keyword}
	\kwd{Adaptivity}
	\kwd{Kullback--Leibler divergence}
	\kwd{Nonparametric statistics}
	\kwd{Predictive distribution}
	\kwd{Stein's prior}
\end{keyword}

\end{frontmatter}

\section{Introduction}

We consider prediction in an infinite sequence model.
The current observation is a random sequence $X=(X_{1},X_{2},\ldots)$ given by
\begin{align}
X_{i}=\theta_{i}+\varepsilon W_{i} \text{ for $i\in\mathbb{N}$},
\label{current_insequence}
\end{align}
where $\theta=(\theta_{1},\theta_{2},\ldots)$ is an unknown sequence in $l_{2}:=\{\theta:\sum_{i=1}^{\infty}\theta_{i}^{2}<\infty\}$
and
$W=(W_{1},W_{2},\ldots)$ is a random sequence distributed according to $\mathop{\otimes}_{i=1}^{\infty}\mathcal{N}(0,1)$ 
on $(\mathbb{R}^{\infty},\mathcal{R}^{\infty})$.
Here $\mathcal{R}^{\infty}$ is a product $\sigma$-field of the Borel $\sigma$-field $\mathcal{R}$ on the Euclidean space $\mathbb{R}$.
Based on the current observation $X$,
we estimate the distribution of a future observation $Y=(Y_{1},Y_{2},\ldots)$ given by
\begin{align}
Y_{i}=\theta_{i}+\tilde{\varepsilon}\widetilde{W}_{i} \text{ for $i\in\mathbb{N}$},
\label{future_insequence}
\end{align}
where $\widetilde{W}=(\widetilde{W}_{1},\widetilde{W}_{2},\ldots)$ is distributed according to $\mathop{\otimes}_{i=1}^{\infty}\mathcal{N}(0,1)$.
We denote the true distribution of $X$ with $\theta$ by $P_{\theta}$
and 
the true distribution of $Y$ with $\theta$ by $Q_{\theta}$.
For simplicity,
we assume that $W$ and $\widetilde{W}$ are independent.

Prediction in an infinite sequence model is shown to be equivalent to the following prediction in a function model. 
Consider that we observe a random function $X(\cdot)$ given by
\begin{align}
	X(\cdot)=F(\cdot)+\varepsilon W(\cdot) \text{ in } L_{2}[0,1],
\label{current_infunction}
\end{align}
where 
$L_{2}[0,1]$ is the $L_{2}$-space on $[0,1]$ with the Lebesgue measure,
$F(\cdot):[0,1]\to\mathbb{R}$ is an unknown absolutely continuous function of which the derivative is in $L_{2}[0,1]$,
$\varepsilon$ is a known constant,
and
$W(\cdot)$ follows the standard Wiener measure on $L_{2}[0,1]$.
Based on the current observation $X(\cdot)$,
we estimate the distribution of a random function $Y(\cdot)$ given by
\begin{align}
	Y(\cdot)=F(\cdot)+\tilde{\varepsilon} \widetilde{W}(\cdot) \text{ in } L_{2}[0,1],
	\label{future_infunction}
\end{align}
where
$\tilde{\varepsilon}$ is a known constant,
and
$\widetilde{W}(\cdot)$ follows the standard Wiener measure on $L_{2}[0,1]$.
The details are provided in Section \ref{Sec:Equivalence}.
\citet{XuandLiang(2010)} established the connection between 
prediction of a function on equispaced grids
and prediction in a high-dimensional sequence model,
using the asymptotics in which the dimension of the parameter grows to infinity according to the growth of the grid size.
Our study is motivated by \citet{XuandLiang(2010)}
and is its generalization to the settings in which the parameter $\theta$ is infinite-dimensional.

Using the above equivalence,
we discuss the performance of a predictive distribution $\widehat{Q}(\cdot;\cdot)$ of $Y$ based on $X$ in an infinite sequence model.
Let $\mathcal{A}$ be the whole set of probability measures on $(\mathbb{R}^{\infty},\mathcal{R}^{\infty})$
and
let $\mathcal{D}$ be the decision space $\{\widehat{Q}:\mathbb{R}^{\infty}\to\mathcal{A}\}$.
We use
the Kullback--Leibler loss
as a loss function:
for all $Q\in\mathcal{A}$ and all $\theta\in l_{2}$,
if $Q_{\theta}$ is absolutely continuous with respect to $Q$, then
\begin{align*}
	l(\theta,Q):=\int \log \frac{\mathrm{d}Q_{\theta}}{\mathrm{d}Q}(y)\mathrm{d}Q_{\theta}(y),
\end{align*}
and otherwise $l(\theta,Q)=\infty$.
The risk of a predictive distribution $\widehat{Q}(\cdot;\cdot)\in\mathcal{D}$
in the case that the true distributions of $X$ and $Y$ are $P_{\theta}$ and $Q_{\theta}$,
respectively,
is denoted by
\begin{align*}
	R(\theta,\widehat{Q}):= \int l(\theta,\widehat{Q}(\cdot;x)) \mathrm{d}P_{\theta}(x).
\end{align*}

We construct an asymptotically minimax predictive distribution $\widehat{Q}^{*}\in\mathcal{D}$ that satisfies
\begin{align*}
	\lim_{\varepsilon\to 0}\left[\mathop{\sup}_{\theta\in\Theta(a,B)}R(\theta,\widehat{Q}^{*})
		\bigg{/}\mathop{\inf}_{\widehat{Q}\in\mathcal{D}}\mathop{\sup}_{\theta\in\Theta(a,B)}R(\theta,\widehat{Q})
	\right]=1,
\end{align*}
where $\Theta(a,B):=\{\theta\in l_{2}:\sum_{i=1}^{\infty} a_{i}^{2}\theta_{i}^{2}\leq B\}$
with a known non-zero and non-decreasing divergent sequence $a=(a_{1},a_{2},\ldots)$ and
with a known constant $B$.
Note that for any $\varepsilon>0$, the minimax risk is bounded above by $(1/2\tilde{\varepsilon}^{2})(B/a^{2}_{1})<\infty$.
Further, note that using the above equivalence between the infinite sequence model and the function model,
the parameter restriction in the infinite sequence model that $\theta\in\Theta(a,B)$ 
corresponds to the restriction that
the corresponding parameter in the function model is smooth;
$B$ represents the volume of the parameter space,
and 
the growth rate of $a$ represents the smoothness of the functions.

The constructed predictive distribution is the Bayesian predictive distribution based on the Gaussian distribution.
For a prior distribution $\Pi$ of $\theta$,
the Bayesian predictive distribution $Q_{\Pi}$ based on $\Pi$ is
obtained by averaging $Q_{\theta}$ with respect to the posterior distribution based on $\Pi$.
Our construction is a generalization of the result in \citet{XuandLiang(2010)} to infinite-dimensional settings.
The details are provided in Section \ref{Sec:AsymptoticallyMinimaxTheorem}.

Further,
we discuss adaptivity to the sequence $a$ and $B$.
In applications,
since we do not know the true values of $a$ and $B$,
it is desirable to construct a predictive distribution without using $a$ and $B$ that is asymptotically minimax in any ellipsoid in the class.
Such a predictive distribution is called an asymptotically minimax adaptive predictive distribution in the class.
In the present paper,
we focus on an asymptotically minimax adaptive predictive distribution
in the simplified Sobolev class $\{\Theta_{\mathrm{Sobolev}}(\alpha,B):\alpha>0,B>0\}$,
where $\Theta_{\mathrm{Sobolev}}(\alpha,B):=\{\theta\in l_{2}:\sum_{i\in\mathbb{N}}i^{2\alpha}\theta^{2}_{i} \leq B \}$.

Our construction of the asymptotically minimax adaptive predictive distribution 
is based on Stein's prior and the division of the parameter into blocks.
The proof of the adaptivity relies on a new oracle inequality related to the Bayesian predictive distribution based on Stein's prior;
see Subsection \ref{Subsec:Oracle}.
Stein's prior on $\mathbb{R}^{n}$ is an improper prior
whose density is $\left(\sum_{i=1}^{n}\theta_{i}^{2}\right)^{(2-n)/n}$.
It is known that
the Bayesian predictive distribution based on that prior has a smaller Kullback--Leibler risk than that based on the uniform prior
in the finite dimensional Gaussian settings;
see \citet{Komaki(2001)} and \citet{GeorgeLiangXu(2006)}.
The division of the parameter into blocks is widely used
for the construction of the asymptotically minimax adaptive estimator;
see \citet{EfromovichandPinsker(1984)}, \citet{Caietal(2000)}, and \citet{CavalierandTsybakov(2001)}.
The details are provided in Section \ref{Sec:Adaptivity}.

The remainder of the paper is organized as follows.
In Section \ref{Sec:Numerical}, we provide an efficient sampling method for the proposed asymptotically minimax adaptive distribution
and provide numerical experiments with a fixed $\varepsilon$.
In Section \ref{Sec:Conclusion}, we conclude the paper.

\section{Equivalence between predictions in infinite sequence models and predictions in function models}
\label{Sec:Equivalence}

In this section,
we provide an equivalence between prediction in a function model and prediction in an infinite sequence model.
The proof consists of the two steps.
First, we provide a connection between predictions in a function model and predictions in the submodel of an infinite sequence model.
Second, we extend predictions in the submodel to predictions in the infinite sequence model.

The detailed description of prediction in a function model is as follows.
Let $\mathcal{H}_{\mathrm{F}}:=\{F(\cdot)\in L_{2}[0,1]:F(0)=0,\dot{F}(\cdot)\in L_{2}[0,1]\}$,
where $\dot{F}$ denotes the derivative of $F$.
Let $\langle\cdot,\cdot\rangle_{L_{2}}$ be the inner product of $L_{2}[0,1]$.
Let $\mathcal{A}_{\mathrm{F}}$ be the whole set of probability distributions on $(L_{2}[0,1],\mathcal{B}_{\mathrm{F}})$,
where $\mathcal{B}_{\mathrm{F}}$ is the Borel $\sigma$-field of $H_{\mathrm{F}}$.
$l_{\mathrm{F}}(F,Q)$ denotes
the Kullback--Leibler loss of $Q\in \mathcal{A}_{\mathrm{F}}$ in the setting that the true parameter function is $F(\cdot)$.

Let $C:L_{2}[0,1]\to L_{2}[0,1]$ be the covariance operator of $W$:
for any $x(\cdot)\in L_{2}[0,1]$,
$C(x(\cdot))(\cdot)=\int_{0}^{1}(\cdot \wedge t)x(t)\mathrm{d}t$.
By Mercer's theorem,
there exists a non-negative monotone decreasing sequence $\{\lambda_{i}\}_{i=1}^{\infty}$
and an orthonormal basis $\{e_{i}(\cdot)\}_{i=1}^{\infty}$ in $L_{2}[0,1]$
such that
\begin{align*}
	C(x(\cdot))(\cdot)=\sum_{i=1}^{\infty}\lambda_{i}\langle x(\cdot) ,e_{i}(\cdot) \rangle_{L_{2}}e_{i}(\cdot) \text{ in } L_{2}[0,1].
\end{align*}
Explicitly,
$\lambda_{i}$ is $1/\{\pi(i-1/2)\}^{2}$
and
$e_{i}(\cdot)$ is $\sqrt{2}\sin((i-1/2)\pi \cdot)$
for $i\in\mathbb{N}$.

The detailed description of prediction in the sub-model of an infinite sequence model is as follows.
Let $S_{\mathrm{D}}$ be $\{x\in \mathbb{R}^{\infty}: \sum_{i=1}^{\infty}x_{i}\sqrt{\lambda_{i}}e_{i}(\cdot) \text{ converges in $L_{2}[0,1]$.} \}$.
Note that $S_{\mathrm{D}}$ is a measurable set with respect to $(\mathbb{R}^{\infty},\mathcal{R}^{\infty})$,
because
$g(x):=\sum_{i}^{\infty}x_{i}\sqrt{\lambda_{i}}e_{i}(\cdot)$ is the pointwise $L_{2}[0,1]$-limit of
$g_{n}(x):=\sum_{i=1}^{n}x_{i}\sqrt{\lambda_{i}}e_{i}(\cdot)$
and we use Theorem 4.2.2. in \citet{Dudley_RAP}.
Let $\mathcal{A}_{\mathrm{D}}$ be the whole set of probability distributions on $(S_{\mathrm{D}},\mathcal{B}_{\mathrm{D}})$,
where $\mathcal{B}_{\mathrm{D}}$ is the relative $\sigma$-field of $\mathcal{R}^{\infty}$.

The following theorem states that the Kullback--Leibler loss in the function model is equivalent to that in the submodel of the infinite sequence model.

\begin{thm}
	\label{identification_FD}
	For every $Q \in \mathcal{A}_{\mathrm{F}}$ and every $F \in \mathcal{H}_{\mathrm{F}}$,
	there exist $\widetilde{Q}\in \mathcal{A}_{\mathrm{D}} $ and $\theta\in l_{2}$
	such that
	\begin{align*}
		l_{\mathrm{F}}(F ,Q)=l(\theta,\widetilde{Q}).
	\end{align*}
	Conversely,
	for every $\widetilde{Q} \in \mathcal{A}_{\mathrm{D}} $ and every $\theta\in l_{2}$,
	there exist $Q \in\mathcal{A}_{\mathrm{F}}$ and $F\in \mathcal{H}_{\mathrm{F}}$
	such that
	\begin{align*}
		l(\theta,\widetilde{Q})=l_{\mathrm{F}}(F,Q).
	\end{align*}
\end{thm}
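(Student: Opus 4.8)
The plan is to build an explicit bimeasurable bijection between $S_{\mathrm{D}}$ and $L_{2}[0,1]$ that at the same time identifies the parameter sets $l_{2}$ and $\mathcal{H}_{\mathrm{F}}$, carries the sampling laws $P_{\theta},Q_{\theta}$ of the sequence submodel to the sampling laws of $X(\cdot),Y(\cdot)$ in the function model, and is therefore Kullback--Leibler-loss preserving. Concretely, for $x\in S_{\mathrm{D}}$ put $\Phi(x):=\sum_{i=1}^{\infty}x_{i}\sqrt{\lambda_{i}}\,e_{i}(\cdot)$, with candidate inverse $\Phi^{-1}(h):=\bigl(\langle h,e_{i}\rangle_{L_{2}}/\sqrt{\lambda_{i}}\bigr)_{i\geq 1}$. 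Since $\{e_{i}\}$ is an orthonormal basis and every $\lambda_{i}$ is positive, a short computation gives $S_{\mathrm{D}}=\{x\in\mathbb{R}^{\infty}:\sum_{i}\lambda_{i}x_{i}^{2}<\infty\}$ and shows that $\Phi$ is a bijection of $S_{\mathrm{D}}$ onto $L_{2}[0,1]$. Measurability of $\Phi$ follows by the same pointwise-$L_{2}$-limit argument used just before the theorem to show $S_{\mathrm{D}}$ is measurable (it is the pointwise limit of the $\mathcal{R}^{\infty}$-measurable maps $g_{n}$), and measurability of $\Phi^{-1}$ follows because each coordinate $h\mapsto\langle h,e_{i}\rangle_{L_{2}}/\sqrt{\lambda_{i}}$ is continuous; hence $\Phi$ is a bimeasurable bijection of $(S_{\mathrm{D}},\mathcal{B}_{\mathrm{D}})$ onto $(L_{2}[0,1],\mathcal{B}_{\mathrm{F}})$.

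The next step is the parameter correspondence. Given $F\in\mathcal{H}_{\mathrm{F}}$ I would set $\theta:=\Phi^{-1}(F)$, i.e.\ $\theta_{i}=\langle F,e_{i}\rangle_{L_{2}}/\sqrt{\lambda_{i}}$, and conversely given $\theta\in l_{2}$ I would set $F:=\Phi(\theta)$. Using $\lambda_{i}=1/\{\pi(i-1/2)\}^{2}$ and the fact that $\{\sqrt{2}\cos((i-1/2)\pi\,\cdot)\}_{i\geq 1}$ is orthonormal in $L_{2}[0,1]$, term-by-term differentiation yields $\|\dot{F}\|_{L_{2}}^{2}=\sum_{i}\theta_{i}^{2}$, while $F(0)=0$ is automatic since $e_{i}(0)=0$; thus $F\in\mathcal{H}_{\mathrm{F}}$ exactly when $\theta\in l_{2}$, and the two assignments are mutually inverse at the parameter level. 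The distributional step is to check that $\Phi$ transports the sampling laws: writing $X=\theta+\varepsilon W$ in the sequence model, $\Phi(X)=\sum_{i}\theta_{i}\sqrt{\lambda_{i}}e_{i}+\varepsilon\sum_{i}W_{i}\sqrt{\lambda_{i}}e_{i}=F+\varepsilon\sum_{i}W_{i}\sqrt{\lambda_{i}}e_{i}$, and by the Karhunen--Lo\`eve theorem applied to the standard Wiener measure, whose covariance operator is precisely $C$, the series $\sum_{i}W_{i}\sqrt{\lambda_{i}}e_{i}$ converges in $L_{2}[0,1]$ almost surely and has the distribution of $W(\cdot)$. Consequently $\Phi_{*}P_{\theta}=P_{F}$ and, identically, $\Phi_{*}Q_{\theta}=Q_{F}$, where $P_{F},Q_{F}$ denote the laws of $F(\cdot)+\varepsilon W(\cdot)$ and $F(\cdot)+\tilde{\varepsilon}\widetilde{W}(\cdot)$; in particular $P_{\theta}$ and $Q_{\theta}$ give full mass to $S_{\mathrm{D}}$, so $\widetilde{Q}:=(\Phi^{-1})_{*}Q\in\mathcal{A}_{\mathrm{D}}$ for $Q\in\mathcal{A}_{\mathrm{F}}$ and $Q:=\Phi_{*}\widetilde{Q}\in\mathcal{A}_{\mathrm{F}}$ for $\widetilde{Q}\in\mathcal{A}_{\mathrm{D}}$.

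To close, I would invoke the invariance of the Kullback--Leibler divergence under a bimeasurable bijection $T$: one has $T_{*}\mu\ll T_{*}\nu$ if and only if $\mu\ll\nu$, in which case $\mathrm{d}T_{*}\mu/\mathrm{d}T_{*}\nu=(\mathrm{d}\mu/\mathrm{d}\nu)\circ T^{-1}$, whence $D(T_{*}\mu\,\|\,T_{*}\nu)=D(\mu\,\|\,\nu)$, both sides being $+\infty$ together when absolute continuity fails. Applying this with $T=\Phi$, $\mu=Q_{\theta}$, $\nu=\widetilde{Q}$, so that $\Phi_{*}\mu=Q_{F}$ and $\Phi_{*}\nu=Q$, gives $l(\theta,\widetilde{Q})=D(Q_{\theta}\,\|\,\widetilde{Q})=D(Q_{F}\,\|\,Q)=l_{\mathrm{F}}(F,Q)$; reading the chain from either end produces the two assertions of the theorem.

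I expect the main obstacle to lie in the distributional identification of the middle paragraph: rigorously verifying that the pushforward of $\mathop{\otimes}_{i=1}^{\infty}\mathcal{N}(0,1)$ under $\Phi$ is the standard Wiener measure --- equivalently, the almost sure $L_{2}$-convergence and correct covariance of the Karhunen--Lo\`eve series --- together with the measurability bookkeeping needed to conclude that $S_{\mathrm{D}}$ carries every $P_{\theta}$ and $Q_{\theta}$. By comparison, the identity $\|\dot{F}\|_{L_{2}}^{2}=\sum_{i}\theta_{i}^{2}$ and the KL-invariance are routine.
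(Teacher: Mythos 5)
Your proposal is correct and takes essentially the same route as the paper: the coordinate map induced by the Mercer/Karhunen--Lo\`eve eigenpairs $\{(\lambda_{i},e_{i})\}$ of the Wiener covariance operator, shown to be a bimeasurable bijection between $S_{\mathrm{D}}$ and $L_{2}[0,1]$ whose restriction identifies $l_{2}$ with $\mathcal{H}_{\mathrm{F}}$, followed by invariance of the Kullback--Leibler divergence under measurable one-to-one maps. The only difference is that you make explicit the transport of the sampling laws (that the pushforward of $\mathop{\otimes}_{i}\mathcal{N}(0,1)$ under the series map is the standard Wiener measure, so $\Phi_{*}Q_{\theta}=Q_{F}$) and the verification $\|\dot{F}\|_{L_{2}}^{2}=\sum_{i}\theta_{i}^{2}$, both of which the paper leaves implicit; this is added detail rather than a different argument.
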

\begin{proof}
	We construct pairs of a measurable one-to-one map $\Phi:L_{2}[0,1]\to S_{\mathrm{D}}$ and a measurable one-to-one map $\Psi:S_{\mathrm{D}}\to L_{2}[0,1]$.

	Let $\Phi$ be defined by
	\begin{align*}
		\Phi (x(\cdot)):= \begin{pmatrix} \langle x(\cdot),\lambda_{1}^{-1/2}e_{1}(\cdot) \rangle_{L_{2}} \\
			\langle x(\cdot),\lambda_{2}^{-1/2}e_{2}(\cdot) \rangle_{L_{2}}  \\
	\cdots \end{pmatrix}.
	\end{align*}
	$\Phi$ is well-defined as a map from $L_{2}[0,1]$ to $S_{\mathrm{D}}$
	because for $x(\cdot)$ and $y(\cdot)$ in $L_{2}[0,1]$ such that $x(\cdot)=y(\cdot)$,
	we have 
	$\langle x(\cdot),\lambda_{i}^{-1/2}e_{i}(\cdot) \rangle_{L_{2}}=\langle y(\cdot), \lambda_{i}^{-1/2}e_{i}(\cdot) \rangle_{L_{2}}$,
	and 
	because
	for $x(\cdot)\in L_{2}[0,1]$,
	we have $\sum_{i=1}^{\infty}\langle x(\cdot),\lambda_{i}^{-1/2} e_{i}(\cdot)\rangle_{L_{2}} \lambda_{i}^{1/2}e_{i}(\cdot)\in L_{2}[0,1]$.

	We show that $\Phi$ is one-to-one, onto, and measurable.
	$\Phi$ is one-to-one
	because 
	if $\Phi(x(\cdot))=\Phi(y(\cdot))$,
	then
	we have $\langle x(\cdot),e_{i}(\cdot)\rangle_{L_{2}}=\langle y(\cdot),e_{i}(\cdot) \rangle_{L_{2}}$ for all $i\in \mathbb{N}$.
	$\Phi$ is onto 
	because
	if $x\in S_{\mathrm{D}}$,
	$x(\cdot):=\sum_{i=1}^{\infty}x_{i}\sqrt{\lambda_{i}}e_{i}(\cdot)$ satisfies that $\Phi(x(\cdot))=x$.
	$\Phi$ is measurable
	because
	$\Phi$ is continuous with respect to the norm $||\cdot||_{L_{2}}$ of $L_{2}[0,1]$ and $\rho$,
	and
	because $\mathcal{R}^{\infty}$ is equal to the Borel $\sigma$-field with respect to $\rho(x,y):=\sum_{i=1}^{\infty}2^{-i}|x_{i}-y_{i}|\wedge 1$.
	$\Phi$ is continuous, because we have
	\begin{align*}
	\rho(\Phi(x(\cdot)),\Phi(y(\cdot)))=\sum_{i=1}^{\infty}(\lambda^{-1/2}_{i}/2^{i})
	|\langle x(\cdot),e_{i}(\cdot) \rangle_{L_{2}}-\langle y(\cdot),e_{i}(\cdot) \rangle_{L_{2}} |\wedge 1.
	\end{align*}
	Further,
	the restriction of $\Phi$ to $\mathcal{H}_{\mathrm{F}}$ is a one-to-one and onto map from $\mathcal{H}_{\mathrm{F}}$ to $l_{2}$.

	Let $\Psi: S_{\mathrm{D}}\to L_{2}[0,1]$ be defined by $\Psi(x):=\sum_{i=1}^{\infty}x_{i}\sqrt{\lambda_{i}}e_{i}(\cdot)$.
	$\Psi$ is the inverse of $\Phi$.
	Thus, $\Psi$ is one-to-one, onto, and measurable.

	Since the Kullback--Leibler divergence is unchanged under a measurable one-to-one mapping,
	the proof is completed.
\end{proof}

\begin{rem}
	\citet{Mandelbaum(1984)} constructed the connection between estimation in an infinite sequence model and estimation in a function model.
	Our connection is its extension to prediction.
	In fact, the map $\Phi$ is used in \citet{Mandelbaum(1984)}.
\end{rem}

The following theorem justifies focusing on prediction in $(\mathbb{R}^{\infty},\mathcal{R}^{\infty})$ 
instead of prediction in $(S_{\mathrm{D}},\mathcal{B}_{\mathrm{D}})$.

\begin{thm}
	For every $\theta\in l_{2}$ and $Q\in \mathcal{A}$,
	there exists $\widetilde{Q}\in \mathcal{A}_{\mathrm{D}}$
	such that
	\begin{align*}
		l(\theta,\widetilde{Q})\leq l(\theta,Q).
	\end{align*}
	In particular,
	for any subset $\Theta$ of $l_{2}$,
	\begin{align*}
		\inf_{\widehat{Q}\in\mathcal{D}}\sup_{\theta\in\Theta}R(\theta,\widehat{Q})
		=\inf_{\widehat{Q}\in\mathcal{D}_{\mathrm{D}}}\sup_{\theta\in\Theta}R(\theta,\widehat{Q}),
	\end{align*}
	where $\mathcal{D}_{\mathrm{D}}:=\{\widehat{Q}:\mathbb{R}^{\infty}\to\mathcal{A}_{\mathrm{D}}\}$.
\end{thm}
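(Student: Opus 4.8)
The plan is to exploit the fact that the future law $Q_{\theta}$ is carried by $S_{\mathrm{D}}$ whenever $\theta\in l_{2}$: one then takes $\widetilde{Q}$ to be the restriction of an arbitrary $Q$ to $S_{\mathrm{D}}$, renormalized to a probability measure. This can only decrease the Kullback--Leibler loss, and it loses nothing because $Q_{\theta}$ puts no mass outside $S_{\mathrm{D}}$.

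First I would establish $Q_{\theta}(S_{\mathrm{D}})=1$. Since $\lambda_{i}=1/\{\pi(i-1/2)\}^{2}$, the sequence $(\lambda_{i})$ is bounded and summable, so for $\theta\in l_{2}$ one has $\sum_{i}\lambda_{i}\theta_{i}^{2}\le\lambda_{1}\sum_{i}\theta_{i}^{2}<\infty$ and hence
\[
\sum_{i=1}^{\infty}\lambda_{i}\,\mathbb{E}_{Q_{\theta}}\big[Y_{i}^{2}\big]=\sum_{i=1}^{\infty}\lambda_{i}\big(\theta_{i}^{2}+\tilde{\varepsilon}^{2}\big)<\infty,
\]
whence $\sum_{i}\lambda_{i}Y_{i}^{2}<\infty$ for $Q_{\theta}$-almost every $Y$. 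By orthonormality of $\{e_{i}(\cdot)\}$ in $L_{2}[0,1]$, the partial sums of $\sum_{i}Y_{i}\sqrt{\lambda_{i}}e_{i}(\cdot)$ then form an $L_{2}[0,1]$-Cauchy sequence, so the series converges and $Y\in S_{\mathrm{D}}$ $Q_{\theta}$-a.s.; in particular the restriction of $Q_{\theta}$ to $\mathcal{B}_{\mathrm{D}}$ is an element of $\mathcal{A}_{\mathrm{D}}$.

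Next I would construct $\widetilde{Q}$. If $Q_{\theta}$ is not absolutely continuous with respect to $Q$, then $l(\theta,Q)=\infty$ and we may take $\widetilde{Q}$ to be $Q_{\theta}$ restricted to $\mathcal{B}_{\mathrm{D}}$, for which $l(\theta,\widetilde{Q})=0$. Otherwise $Q_{\theta}\ll Q$, which forces $Q(S_{\mathrm{D}})>0$ (else $Q_{\theta}(S_{\mathrm{D}})=0$), and we set $\widetilde{Q}(\cdot):=Q(\cdot)/Q(S_{\mathrm{D}})$ on $\mathcal{B}_{\mathrm{D}}\subset\mathcal{R}^{\infty}$. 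Because $Q_{\theta}$ is carried by $S_{\mathrm{D}}$, one computes $\mathrm{d}Q_{\theta}/\mathrm{d}\widetilde{Q}=Q(S_{\mathrm{D}})\,\mathrm{d}Q_{\theta}/\mathrm{d}Q$, hence $l(\theta,\widetilde{Q})=\log Q(S_{\mathrm{D}})+l(\theta,Q)\le l(\theta,Q)$, which proves the first assertion.

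For the displayed identity, the inequality ``$\ge$'' is immediate because every $\widehat{Q}\in\mathcal{D}_{\mathrm{D}}$ extends by zero outside $S_{\mathrm{D}}$ to an element of $\mathcal{D}$ with the same risk (again using $Q_{\theta}(S_{\mathrm{D}})=1$). For ``$\le$'', given $\widehat{Q}\in\mathcal{D}$ I would apply the previous construction pointwise in $x$: put $\widehat{\widetilde{Q}}(\,\cdot\,;x):=\widehat{Q}(\,\cdot\,;x)/\widehat{Q}(S_{\mathrm{D}};x)$ on $\mathcal{B}_{\mathrm{D}}$ when $\widehat{Q}(S_{\mathrm{D}};x)>0$ and $\widehat{\widetilde{Q}}(\,\cdot\,;x):=Q_{\theta}|_{\mathcal{B}_{\mathrm{D}}}$ otherwise; since $S_{\mathrm{D}}\in\mathcal{R}^{\infty}$ and $x\mapsto\widehat{Q}(C;x)$ is measurable for each $C\in\mathcal{R}^{\infty}$, this is a measurable map, i.e.\ an element of $\mathcal{D}_{\mathrm{D}}$. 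On $\{\widehat{Q}(S_{\mathrm{D}};x)>0\}$ the computation above (or the trivial bound $l(\theta,\cdot)\le\infty$ when $Q_{\theta}\not\ll\widehat{Q}(\,\cdot\,;x)$) gives $l(\theta,\widehat{\widetilde{Q}}(\,\cdot\,;x))\le l(\theta,\widehat{Q}(\,\cdot\,;x))$, while on $\{\widehat{Q}(S_{\mathrm{D}};x)=0\}$ one has $Q_{\theta}\not\ll\widehat{Q}(\,\cdot\,;x)$, so $l(\theta,\widehat{Q}(\,\cdot\,;x))=\infty\ge l(\theta,\widehat{\widetilde{Q}}(\,\cdot\,;x))$. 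Integrating against $P_{\theta}$ yields $R(\theta,\widehat{\widetilde{Q}})\le R(\theta,\widehat{Q})$ for all $\theta\in\Theta$, and taking suprema over $\Theta$ and then infima over $\mathcal{D}$ gives the identity. I expect the only genuinely substantive step to be the first one --- verifying that the Gaussian measure $Q_{\theta}$ concentrates on $S_{\mathrm{D}}$ --- with the renormalization identity and the measurable selection in the last step being routine.
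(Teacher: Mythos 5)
Your proposal is correct and follows essentially the same route as the paper: establish $Q_{\theta}(S_{\mathrm{D}})=1$ (which you verify directly via the summability of $\lambda_{i}(\theta_{i}^{2}+\tilde{\varepsilon}^{2})$ rather than invoking the Karhunen--Lo\`eve theorem, as the paper does) and then pass to the normalized restriction of $Q$ to $S_{\mathrm{D}}$, which changes the loss by exactly $\log Q(S_{\mathrm{D}})\leq 0$; your pointwise-in-$x$ treatment of the ``in particular'' statement simply makes explicit what the paper leaves implicit. The only blemish is cosmetic: in your final paragraph the labels ``$\geq$'' and ``$\leq$'' are swapped relative to the arguments you attach to them (extension of elements of $\mathcal{D}_{\mathrm{D}}$ gives $\inf_{\mathcal{D}}\leq\inf_{\mathcal{D}_{\mathrm{D}}}$, while the restriction construction gives the reverse), but both directions are in fact correctly established.
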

\begin{proof}
	Note that $Q_{\theta}(S_{\mathrm{D}})=1$ by the Karhunen--Lo\`eve theorem.
	For $Q\in\mathcal{A}$ such that $Q(S_{\mathrm{D}})=0$,
	$l(\theta,Q)=\infty$ and then 
	for any $\widetilde{Q}\in\mathcal{A}_{\mathrm{D}}$,
	$l(\theta,\tilde{Q})<l(\theta,Q)$.
	For $Q\in\mathcal{A}$ such that $Q(S_{\mathrm{D}})>0$,
	\begin{align*}
		l(\theta,Q)=l(\theta,\widetilde{Q})-\log Q(S_{\mathrm{D}})\geq l(\theta,\widetilde{Q}),
	\end{align*}
	where $\widetilde{Q}$ is the restriction of $Q$ to $S_{\mathrm{D}}$.
\end{proof}

\section{Asymptotically minimax predictive distribution}
\label{Sec:AsymptoticallyMinimaxTheorem}

In this section,
we construct an asymptotically minimax predictive distribution 
for the setting in which the parameter space is an ellipsoid $\Theta(a,B)=\{\theta\in l_{2}:\sum_{i=1}^{\infty}a_{i}^{2}\theta_{i}^{2}\leq B\}$
with a known sequence $a=(a_{1},a_{2},\ldots)$ and with a known constant $B$.
Further,
we provide the asymptotically minimax predictive distributions in two well-known ellipsoids;
a Sobolev and an exponential ellipsoids.

\subsection{Principal theorem of Section \ref{Sec:AsymptoticallyMinimaxTheorem}}

We construct an asymptotically minimax predictive distribution in Theorem \ref{AsymptoticallyMinimaxTheorem}.

We introduce notations used in the principal theorem.
For an infinite sequence $\tau=(\tau_{1},\tau_{2},\ldots)$,
let $\mathrm{G}_{\tau}$ be $\mathop{\otimes}_{i=1}^{\infty}\mathcal{N}(0,\tau_{i}^{2})$ with variance $\tau^{2}=(\tau_{1}^{2},\tau_{2}^{2},\ldots)$.
Then,
the posterior distribution $\mathrm{G}_{\tau}(\cdot|X)$ based on $\mathrm{G}_{\tau}$ is
\begin{eqnarray}
\mathrm{G}_{\tau}(\cdot|X=x)
&=&
\mathop{\otimes}_{i=1}^{\infty}
\mathcal{N}\left(\frac{1/\varepsilon^{2}}{1/\varepsilon^{2}+1/\tau_{i}^{2}}x_{i},\frac{1}{1/\varepsilon^{2}+1/\tau^{2}_{i}}\right)
\text{ $P_{\theta}$-a.s.}.
\label{Gaussian_Posterior}
\end{eqnarray}
The Bayesian predictive distribution $Q_{\mathrm{G}_{\tau}}(\cdot|X)$ based on $\mathrm{G}_{\tau}$ is
\begin{eqnarray}
	Q_{\mathrm{G}_{\tau}}(\cdot|X=x)
&=&
\mathop{\otimes}_{i=1}^{\infty}
\mathcal{N}
\left(
\frac{1/\varepsilon^{2}}{1/\varepsilon^{2}+1/\tau^{2}_{i}}x_{i},
\frac{1}{1/\varepsilon^{2}+1/\tau^{2}_{i}}+\tilde{\varepsilon}^{2}
\right)
\text{ $P_{\theta}$-a.s.}.
\label{Gaussian_Predictivedistribution}
\end{eqnarray}
For the derivations of (\ref{Gaussian_Posterior}) and (\ref{Gaussian_Predictivedistribution}),
see Theorem 3.2 in \citet{Zhao(2000)}.
Let $v^{2}_{\varepsilon}$ and $v^{2}_{\varepsilon,\tilde{\varepsilon}}$ be defined by
\begin{eqnarray}
	v^{2}_{\varepsilon,\tilde{\varepsilon}}:=\frac{1}{1/\varepsilon^{2}+1/\tilde{\varepsilon}^{2}}
&\mathrm{and}&
	v^{2}_{\varepsilon}:=\varepsilon^{2},
	\label{def_v}
\end{eqnarray}
respectively.
Let $\tau^{*}(\varepsilon,\tilde{\varepsilon})=(\tau^{*}_{1}(\varepsilon,\tilde{\varepsilon}),\tau^{*}_{2}(\varepsilon,\tilde{\varepsilon}),\ldots)$ be
the infinite sequence of which the $i$-th coordinate for $i\in\mathbb{N}$ is defined by
\begin{align}
	\left(\tau^{*}_{i}(\varepsilon,\tilde{\varepsilon})\right)^{2}=\frac{1}{2}
\left[(v^{2}_{\varepsilon}-v^{2}_{\varepsilon,\tilde{\varepsilon}})
\sqrt{1+\frac{4}{2\lambda(\varepsilon,\tilde{\varepsilon}) a^{2}_{i}(v^{2}_{\varepsilon}-v^{2}_{\varepsilon,\tilde{\varepsilon}})}}
-(v^{2}_{\varepsilon}+v^{2}_{\varepsilon,\tilde{\varepsilon}})
\right]_{+},
\label{optimalvariance}
\end{align}
where $[t]_{+}=\mathop{\max}\{t,0\}$,
and
$\lambda(\varepsilon,\tilde{\varepsilon})$ is determined by
\begin{eqnarray*}
	\sum_{i=1}^{\infty}a_{i}^{2}\left(\tau^{*}_{i}(\varepsilon,\tilde{\varepsilon})\right)^{2}=B.
\end{eqnarray*}
Let $T(\varepsilon,\tilde{\varepsilon})$ be the number defined by
\begin{eqnarray}
	T(\varepsilon,\tilde{\varepsilon}):=\mathop{\sup}\{i:\text{$\tau^{*}_{i}(\varepsilon,\tilde{\varepsilon})$ is non-zero}\}
=\mathop{\sup}\left\{i:\frac{1}{\lambda(\varepsilon,\tilde{\varepsilon})a^{2}_{i}}>2\tilde{\varepsilon}^{2}\right\}.
\label{truncationnumber}
\end{eqnarray}

The following is the principal theorem of this section.
\begin{thm}
\label{AsymptoticallyMinimaxTheorem}
Let $d(\varepsilon)$ be $\lfloor 1/\varepsilon^{2}\rfloor$.
Assume that $0 < a_{1} \leq a_{2} \leq \ldots \nearrow \infty$.
If
$1/\tilde{\varepsilon}=\mathrm{O}(1/\varepsilon)$ as $\varepsilon\to 0$
and
$\log(1/\varepsilon^{2})\mathop{\sum}_{i=1}^{d(\varepsilon)}a_{i}^{4}\left(\tau^{*}_{i}(\varepsilon,\tilde{\varepsilon})\right)^{4}
=\mathrm{o}(1) \text{ as $\varepsilon\to 0$}$,
then
\begin{align*}
\mathop{\lim}_{\varepsilon\rightarrow 0}
\left[
\left\{\mathop{\inf}_{\widehat{Q}\in \mathcal{D}}\mathop{\sup}_{\theta\in\Theta(a,B)} R(\theta,\widehat{Q})\right\}
\bigg/
\mathop{\sum}_{i=1}^{T(\varepsilon,\tilde{\varepsilon})}\frac{1}{2}
\log\left(\frac{1+(\tau^{*}_{i}(\varepsilon,\tilde{\varepsilon}))^{2}/v^{2}_{\varepsilon,\tilde{\varepsilon}}}
{1+(\tau^{*}_{i}(\varepsilon,\tilde{\varepsilon}))^{2}/v^{2}_{\varepsilon}}\right)
\right]
=1.
\end{align*}
Further,
the Bayesian predictive distribution based on 
$\mathrm{G}_{\tau=\tau^{*}(\varepsilon,\tilde{\varepsilon})}$ is
asymptotically minimax:
\begin{eqnarray*}
	\mathop{\sup}_{\theta\in\Theta(a,B)}R(\theta,Q_{\mathrm{G}_{\tau=\tau^{*}(\varepsilon,\tilde{\varepsilon})}})=(1+\mathrm{o}(1))
\mathop{\inf}_{\widehat{Q}\in\mathcal{D}}
\mathop{\sup}_{\theta\in\Theta(a,B)}R(\theta,\widehat{Q})
\end{eqnarray*}
as $\varepsilon\to 0$.
\end{thm}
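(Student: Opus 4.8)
The plan is to prove matching upper and lower bounds for $\inf_{\widehat Q}\sup_{\Theta(a,B)}R(\theta,\widehat Q)$; the upper bound will turn out to be an exact finite-$\varepsilon$ identity, so all the asymptotics, and all of the hypotheses, enter only through the lower bound. For the upper bound, observe that both $Q_\theta=\bigotimes_i\mathcal N(\theta_i,\tilde\varepsilon^2)$ and, by (\ref{Gaussian_Predictivedistribution}), $Q_{\mathrm G_{\tau^*(\varepsilon,\tilde\varepsilon)}}(\cdot;x)=\bigotimes_i\mathcal N(w_ix_i,s_i^2)$ — with $w_i=(\tau^*_i)^2/((\tau^*_i)^2+\varepsilon^2)$, $\sigma_i^2=\varepsilon^2(\tau^*_i)^2/((\tau^*_i)^2+\varepsilon^2)$, $s_i^2=\sigma_i^2+\tilde\varepsilon^2$ — are product measures, so the Kullback--Leibler loss is the sum of its coordinatewise contributions. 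A one-dimensional Gaussian computation followed by integration over $X_i\sim\mathcal N(\theta_i,\varepsilon^2)$ shows the $i$-th contribution to $R(\theta,Q_{\mathrm G_{\tau^*}})$ is affine and nondecreasing in $\theta_i^2$: for $i\le T(\varepsilon,\tilde\varepsilon)$ it is $c_i+\mu a_i^2\theta_i^2$ with $c_i=\tfrac12\bigl(\log(s_i^2/\tilde\varepsilon^2)-1+(\tilde\varepsilon^2+w_i^2\varepsilon^2)/s_i^2\bigr)$ and slope $(1-w_i)^2/(2s_i^2)$, identified with $\mu a_i^2$ for a \emph{common} $\mu=\lambda(\varepsilon,\tilde\varepsilon)$ by checking that the formula (\ref{optimalvariance}) is exactly the solution of the Lagrange equations for maximizing $\sum_i\tfrac12\log(1+\sigma_i^2/\tilde\varepsilon^2)$ subject to $\sum_i a_i^2\tau_i^2=B$; for $i>T(\varepsilon,\tilde\varepsilon)$ it is $\theta_i^2/(2\tilde\varepsilon^2)$ and $1/(2\tilde\varepsilon^2a_i^2)\le\mu$ by the second description of $T(\varepsilon,\tilde\varepsilon)$ in (\ref{truncationnumber}).

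Maximizing $R(\theta,Q_{\mathrm G_{\tau^*}})$ over $\theta\in\Theta(a,B)$ then amounts to maximizing $\sum_i(\mathrm{slope})_i\theta_i^2$ under $\sum_i a_i^2\theta_i^2\le B$, and since $(\mathrm{slope})_i/a_i^2\le\mu$ with equality for $i\le T(\varepsilon,\tilde\varepsilon)$, the maximum is $\mu B$, attained by placing the whole budget on one coordinate $i\le T(\varepsilon,\tilde\varepsilon)$. Using $\sum_{i\le T}a_i^2(\tau^*_i)^2=B$ together with the algebraic identity $(1-w_i)^2(\tau^*_i)^2=\sigma_i^2-w_i^2\varepsilon^2$ one obtains the cancellation $c_i+\mu a_i^2(\tau^*_i)^2=\tfrac12\log(s_i^2/\tilde\varepsilon^2)$, hence $\sup_{\Theta(a,B)}R(\theta,Q_{\mathrm G_{\tau^*}})=\sum_{i\le T}\tfrac12\log(s_i^2/\tilde\varepsilon^2)$; since $s_i^2/\tilde\varepsilon^2=(1+(\tau^*_i)^2/v^2_{\varepsilon,\tilde\varepsilon})/(1+(\tau^*_i)^2/v^2_\varepsilon)$ this is exactly the normalizing sum in the statement, so $\inf_{\widehat Q}\sup_{\Theta(a,B)}R(\theta,\widehat Q)$ is bounded above by that sum.

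For the lower bound I would use $\inf_{\widehat Q}\sup_{\Theta(a,B)}R(\theta,\widehat Q)\ge\inf_{\widehat Q}\int R(\theta,\widehat Q)\,\mathrm d\Pi(\theta)$ for a prior $\Pi$ supported in $\Theta(a,B)$, together with the classical identity that the Bayes risk of the Bayesian predictive equals the conditional mutual information $I_\Pi(\theta;Y\mid X)$. For $\Pi$ take the Gaussian prior defined as in (\ref{optimalvariance}) but for the smaller ellipsoid $\Theta(a,(1-\eta)B)$, conditioned on $\Theta(a,B)$, with $\eta=\eta_\varepsilon\downarrow0$ chosen below. Under the unconditioned prior $\sum_i a_i^2\theta_i^2$ has mean $(1-\eta)B$ and variance $2\sum_i a_i^4(\tau^{\eta}_i)^4$, comparable to $\sum_{i\le d(\varepsilon)}a_i^4(\tau^*_i)^4$ since $\tau^{\eta}\asymp\tau^*$ coordinatewise; Chebyshev then gives $\Pi$-mass $1-\delta_\varepsilon$ to $\Theta(a,B)$ with $\delta_\varepsilon\lesssim\eta^{-2}\sum_{i\le d(\varepsilon)}a_i^4(\tau^*_i)^4$. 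Conditioning decreases $I_\Pi(\theta;Y\mid X)$ by at most the binary entropy $h(\delta_\varepsilon)$ plus $\delta_\varepsilon$ times the mutual information of the discarded tail, and the latter is bounded coordinatewise by $\tfrac12\log(1+\mathrm{Var}(\theta_i\mid X_i)/\tilde\varepsilon^2)$ with posterior second moment inflated by at most $1/\delta_\varepsilon$, so summed over the $\mathrm O(T(\varepsilon,\tilde\varepsilon))$ active coordinates the defect is $\mathrm O(\delta_\varepsilon T(\varepsilon,\tilde\varepsilon)\log(1/\delta_\varepsilon))$. The unconditioned Bayes risk equals $\sum_{i\le T^{\eta}}\tfrac12\log(1+(\sigma^{\eta}_i)^2/\tilde\varepsilon^2)$, the value of the concave maximization at radius $(1-\eta)B$, which is $\ge(1-\eta)$ times its value at radius $B$ — the target sum — because that value function is concave and vanishes at $0$. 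Since $1/\tilde\varepsilon=\mathrm O(1/\varepsilon)$ keeps each such logarithm bounded, the target sum is of order $T(\varepsilon,\tilde\varepsilon)$, and the hypothesis $\log(1/\varepsilon^2)\sum_{i\le d(\varepsilon)}a_i^4(\tau^*_i)^4=\mathrm o(1)$ lets one pick $\eta_\varepsilon\to0$ so slowly that $\eta_\varepsilon^{-2}\log(1/\varepsilon^2)\sum_{i\le d(\varepsilon)}a_i^4(\tau^*_i)^4\to0$; then, using $\log(1/\delta_\varepsilon)\lesssim\log(1/\varepsilon^2)+\log(1/\eta_\varepsilon)$, one gets $\delta_\varepsilon\log(1/\delta_\varepsilon)=\mathrm o(1/T(\varepsilon,\tilde\varepsilon))$, so every correction term is $\mathrm o$ of the target sum. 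This gives $\inf_{\widehat Q}\sup_{\Theta(a,B)}R(\theta,\widehat Q)\ge(1-\mathrm o(1))\sum_{i\le T}\tfrac12\log\bigl((1+(\tau^*_i)^2/v^2_{\varepsilon,\tilde\varepsilon})/(1+(\tau^*_i)^2/v^2_\varepsilon)\bigr)$.

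Combining the two bounds yields the first displayed limit, and since $\sup_{\Theta(a,B)}R(\theta,Q_{\mathrm G_{\tau^*}})$ equals the normalizing sum, which is $(1+\mathrm o(1))$ times the minimax risk, the asserted asymptotic minimaxity of $Q_{\mathrm G_{\tau=\tau^*(\varepsilon,\tilde\varepsilon)}}$ follows immediately. I expect the genuinely delicate part to be the lower bound: the Gaussian prior $\mathrm G_{\tau^*}$ has Bayes risk \emph{exactly} the target but is not supported in $\Theta(a,B)$, and passing to a prior that is — simultaneously shrinking the radius and conditioning, quantifying the concentration of $\sum_i a_i^2\theta_i^2$, and controlling the mutual-information defect from conditioning — without losing more than a $1+\mathrm o(1)$ factor is where both hypotheses of the theorem are consumed. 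By contrast the upper bound is an exact identity for every $\varepsilon$, its only non-routine ingredient being the verification that (\ref{optimalvariance}) solves the per-coordinate Lagrange equations, so that the slopes $\mu=\lambda(\varepsilon,\tilde\varepsilon)$ are equalized across the active coordinates.
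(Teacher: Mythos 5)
Your upper bound is essentially the paper's. Lemma \ref{LinearMinimax} shows that the minimax value over the Gaussian class $\{Q_{\mathrm{G}_{\tau}}\}$ equals $\sum_{i\le T(\varepsilon,\tilde{\varepsilon})}\tfrac12\log\bigl((1+(\tau^{*}_{i})^{2}/v^{2}_{\varepsilon,\tilde{\varepsilon}})/(1+(\tau^{*}_{i})^{2}/v^{2}_{\varepsilon})\bigr)$ and that $\sup_{\Theta(a,B)}R(\theta,Q_{\mathrm{G}_{\tau=\tau^{*}}})$ equals that same sum, and Lemma \ref{upperboundMinimax} bounds the minimax risk by inclusion of this class in $\mathcal{D}$; your affine-in-$\theta_i^2$/Lagrangian computation is an equivalent rewriting of the computation around (\ref{Min_tau}). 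Where you genuinely diverge is the lower bound: the paper does not re-prove a Pinsker-type bound, but shows via the projection $\pi^{(d)}$ and Jensen's inequality (Lemma \ref{Lowerboundbysubproblem}) that the infinite-dimensional minimax risk dominates the $d(\varepsilon)$-dimensional minimax risk, and then invokes Theorem 4.2 of Xu and Liang (Lemma \ref{MinimaxLowerboundinsieve}); both hypotheses of the theorem are consumed entirely inside that cited result. You instead sketch that finite-dimensional argument from scratch, directly in infinite dimensions, using a Gaussian prior tuned to the shrunken ellipsoid, conditioned to lie in $\Theta(a,B)$, and the Bayes-risk/mutual-information identity. This is a legitimate alternative (it is essentially how the cited theorem is proved) and would make the proof self-contained, at the price of redoing exactly the delicate part the paper outsources.

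As written, though, that lower bound has concrete gaps. First, the conditioning defect: the claim that restricting the prior to $\Theta(a,B)$ costs at most $h(\delta_\varepsilon)$ plus $\delta_\varepsilon$ times the information of the discarded part needs the chain of mutual-information (in)equalities written out, and, more seriously, the prior conditioned on the bad event is no longer a product measure, so your coordinatewise bound $\sum_i\tfrac12\log(1+\cdot/\tilde{\varepsilon}^2)$ with ``second moment inflated by at most $1/\delta_\varepsilon$'' requires a subadditivity-plus-maximum-entropy argument that is only gestured at. Second, you absorb the defect by asserting that the normalizing sum is of order $T(\varepsilon,\tilde{\varepsilon})$, but you only establish the upper-bound direction (each log term is $\mathrm{O}(1)$ because $1/\tilde{\varepsilon}=\mathrm{O}(1/\varepsilon)$); for general $a$ the sum need not be bounded below by a constant times $T$, so the defect must be compared against the sum itself, and that comparison is not made. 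Third, the hypothesis controls $\sum_{i\le d(\varepsilon)}a_i^4(\tau_i^{*})^4$, whereas your Chebyshev step needs the full prior variance $2\sum_i a_i^4(\tau_i^{\eta})^4$ over all active coordinates of the shrunken-ellipsoid prior; the asserted coordinatewise comparability $\tau^{\eta}\asymp\tau^{*}$ is unproven, and the active range $T$ need not be contained in $\{1,\ldots,d(\varepsilon)\}$, so the truncated sum in the hypothesis does not automatically control it. None of these is obviously fatal, but they are precisely the content of the cited Theorem 4.2 of Xu and Liang; a complete argument should either carry out these estimates or, as the paper does, reduce to the finite-dimensional statement via Lemma \ref{Lowerboundbysubproblem} and cite it.
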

The proof is provided in the next subsection.

\subsection{Proof of the principal theorem of Section \ref{Sec:AsymptoticallyMinimaxTheorem}}
The proof of Theorem \ref{AsymptoticallyMinimaxTheorem}
requires five lemmas.
Because the parameter is infinite-dimensional,
we need Lemmas \ref{GaussianMeasures} and \ref{Lowerboundbysubproblem}
in addition to Theorem 4.2 in \citet{XuandLiang(2010)}.

The first lemma provides the explicit form of the Kullback--Leibler risk of the Bayesian predictive distribution $Q_{\mathrm{G}_{\tau}}$.
The proof is provided in Appendix \ref{Appendix:ProofofSection2}.

\begin{lem}
\label{GaussianMeasures}
If $\theta\in l_{2}$ and $\tau\in l_{2}$,
then 
$Q_{\mathrm{G}_{\tau}}(\cdot|X)$ and $Q_{\theta}$ are mutually absolutely continuous given $X=x$ $P_{\theta}$-a.s.~and
the Kullback--Leibler risk $R(\theta,Q_{\mathrm{G}_{\tau}})$ of the Bayesian predictive distribution $Q_{\mathrm{G}_{\tau}}$
is given by
\begin{align}
	R(\theta,Q_{\mathrm{G}_{\tau}})
=
\mathop{\sum}_{i=1}^{\infty}\left\{\frac{1}{2}\log\left(\frac{1+\tau^{2}_{i}/v^{2}_{\varepsilon,\tilde{\varepsilon}}}{1+\tau^{2}_{i}/v^{2}_{\varepsilon}}\right)
	+\frac{1}{2}\frac{v^{2}_{\varepsilon,\tilde{\varepsilon}}+\theta^{2}_{i}}{v^{2}_{\varepsilon,\tilde{\varepsilon}}+\tau^{2}_{i}}
	-\frac{1}{2}\frac{v^{2}_{\varepsilon}+\theta^{2}_{i}}{v^{2}_{\varepsilon}+\tau^{2}_{i}}
\right\}.
\label{KL_Bayes_productNormal}
\end{align}
\end{lem}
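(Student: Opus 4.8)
The plan is to condition on $X=x$ and reduce everything to a coordinatewise, one‑dimensional computation, exploiting the fact that both $Q_{\theta}$ and the conditional Bayesian predictive distribution $Q_{\mathrm{G}_{\tau}}(\cdot\,|\,x)$ of (\ref{Gaussian_Predictivedistribution}) are countable products of nondegenerate univariate Gaussians. Write $\kappa_{i}:=(1/\varepsilon^{2})/(1/\varepsilon^{2}+1/\tau_{i}^{2})=\tau_{i}^{2}/(\tau_{i}^{2}+\varepsilon^{2})$, $\sigma_{*,i}^{2}:=1/(1/\varepsilon^{2}+1/\tau_{i}^{2})=\varepsilon^{2}\tau_{i}^{2}/(\varepsilon^{2}+\tau_{i}^{2})$, and $s_{i}^{2}:=\sigma_{*,i}^{2}+\tilde{\varepsilon}^{2}$, so that the $i$‑th marginal of $Q_{\mathrm{G}_{\tau}}(\cdot\,|\,x)$ is $\mathcal{N}(\kappa_{i}x_{i},s_{i}^{2})$ while that of $Q_{\theta}$ is $\mathcal{N}(\theta_{i},\tilde{\varepsilon}^{2})$.

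For the absolute‑continuity assertion, observe that coordinatewise any two Gaussians with strictly positive variances are mutually absolutely continuous, so Kakutani's dichotomy for product measures forces $Q_{\theta}$ and $Q_{\mathrm{G}_{\tau}}(\cdot\,|\,x)$ to be either equivalent or mutually singular; to exclude singularity it suffices to exhibit, for $P_{\theta}$‑a.e.\ $x$, a finite value of $l(\theta,Q_{\mathrm{G}_{\tau}}(\cdot\,|\,x))$. By the closed form for the Kullback--Leibler divergence between univariate Gaussians, the $i$‑th coordinate contributes $\tfrac12\log(s_{i}^{2}/\tilde{\varepsilon}^{2})+\bigl(\tilde{\varepsilon}^{2}+(\theta_{i}-\kappa_{i}x_{i})^{2}\bigr)/(2s_{i}^{2})-\tfrac12$. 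Here $\sigma_{*,i}^{2}\le\tau_{i}^{2}$ gives $0\le\log(s_{i}^{2}/\tilde{\varepsilon}^{2})\le\tau_{i}^{2}/\tilde{\varepsilon}^{2}$, summable since $\tau\in l_{2}$; and writing $x_{i}=\theta_{i}+\varepsilon W_{i}$, so that $\theta_{i}-\kappa_{i}x_{i}=(1-\kappa_{i})\theta_{i}-\varepsilon\kappa_{i}W_{i}$, one has $\sum_{i}(1-\kappa_{i})^{2}\theta_{i}^{2}\le\sum_{i}\theta_{i}^{2}<\infty$ because $\theta\in l_{2}$, while $\sum_{i}\varepsilon^{2}\kappa_{i}^{2}W_{i}^{2}\le\varepsilon^{-2}\sum_{i}\tau_{i}^{4}W_{i}^{2}$ is finite $P_{\theta}$‑a.s.\ by Tonelli, since $\mathbb{E}_{\theta}\sum_{i}\tau_{i}^{4}W_{i}^{2}=\sum_{i}\tau_{i}^{4}<\infty$. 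Hence the conditional loss is finite $P_{\theta}$‑a.s., which gives the claimed mutual absolute continuity.

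Granting this, additivity of the Kullback--Leibler divergence over product measures yields $l(\theta,Q_{\mathrm{G}_{\tau}}(\cdot\,|\,x))=\sum_{i}\bigl[\tfrac12\log(s_{i}^{2}/\tilde{\varepsilon}^{2})+(\tilde{\varepsilon}^{2}+(\theta_{i}-\kappa_{i}x_{i})^{2})/(2s_{i}^{2})-\tfrac12\bigr]$. Integrating over $X\sim P_{\theta}$ and using $\mathbb{E}_{\theta}[(\theta_{i}-\kappa_{i}X_{i})^{2}]=(1-\kappa_{i})^{2}\theta_{i}^{2}+\kappa_{i}^{2}\varepsilon^{2}$ gives $R(\theta,Q_{\mathrm{G}_{\tau}})=\sum_{i}\bigl[\tfrac12\log(s_{i}^{2}/\tilde{\varepsilon}^{2})+\tfrac12\{(\tilde{\varepsilon}^{2}+(1-\kappa_{i})^{2}\theta_{i}^{2}+\kappa_{i}^{2}\varepsilon^{2})/s_{i}^{2}-1\}\bigr]$; the exchange of $\sum$ and $\mathbb{E}_{\theta}$ is legitimate because, writing each summand as a constant $c_{i}:=\tfrac12\log(s_{i}^{2}/\tilde{\varepsilon}^{2})+\tilde{\varepsilon}^{2}/(2s_{i}^{2})-\tfrac12$ plus a nonnegative term, the constants satisfy $\sum_{i}|c_{i}|\le\sum_{i}(\sigma_{*,i}^{2}/\tilde{\varepsilon}^{2})<\infty$ and the nonnegative part is handled by Tonelli.

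It then remains to match this termwise with (\ref{KL_Bayes_productNormal}). Using $v_{\varepsilon}^{2}=\varepsilon^{2}$ and $v_{\varepsilon,\tilde{\varepsilon}}^{2}=(1/\varepsilon^{2}+1/\tilde{\varepsilon}^{2})^{-1}$, hence $v_{\varepsilon,\tilde{\varepsilon}}^{2}(\varepsilon^{2}+\tilde{\varepsilon}^{2})=\varepsilon^{2}\tilde{\varepsilon}^{2}$, a one‑line manipulation gives $s_{i}^{2}/\tilde{\varepsilon}^{2}=(1+\tau_{i}^{2}/v_{\varepsilon,\tilde{\varepsilon}}^{2})/(1+\tau_{i}^{2}/v_{\varepsilon}^{2})$, which is the logarithmic term; and substituting $1-\kappa_{i}=\varepsilon^{2}/(\tau_{i}^{2}+\varepsilon^{2})$, $\kappa_{i}=\tau_{i}^{2}/(\tau_{i}^{2}+\varepsilon^{2})$ and clearing denominators shows that both $(\tilde{\varepsilon}^{2}+(1-\kappa_{i})^{2}\theta_{i}^{2}+\kappa_{i}^{2}\varepsilon^{2})/s_{i}^{2}-1$ and $(v_{\varepsilon,\tilde{\varepsilon}}^{2}+\theta_{i}^{2})/(v_{\varepsilon,\tilde{\varepsilon}}^{2}+\tau_{i}^{2})-(v_{\varepsilon}^{2}+\theta_{i}^{2})/(v_{\varepsilon}^{2}+\tau_{i}^{2})$ reduce to $\varepsilon^{4}(\theta_{i}^{2}-\tau_{i}^{2})/[(\varepsilon^{2}+\tilde{\varepsilon}^{2})(v_{\varepsilon,\tilde{\varepsilon}}^{2}+\tau_{i}^{2})(\varepsilon^{2}+\tau_{i}^{2})]$, which completes the proof. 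The \emph{main obstacle} is not this algebra, which is routine, but the infinite‑dimensional bookkeeping: one must check that $Q_{\mathrm{G}_{\tau}}(\cdot\,|\,x)$ is a genuine probability measure on $(\mathbb{R}^{\infty},\mathcal{R}^{\infty})$, that it and $Q_{\theta}$ are mutually absolutely continuous for $P_{\theta}$‑a.e.\ $x$, and that summation and integration may be interchanged; the essential quantitative input there is the hypothesis $\tau\in l_{2}$, which is precisely what controls the Wiener‑noise contribution $\sum_{i}\varepsilon^{2}\kappa_{i}^{2}W_{i}^{2}$ and the logarithmic terms.
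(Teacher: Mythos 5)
Your proposal is correct and ends with the same coordinatewise Gaussian algebra, but it organizes the two measure-theoretic steps differently from the paper, and the comparison is worth recording. For mutual absolute continuity the paper verifies Kakutani's criterion head-on: it factors the infinite product of Hellinger affinities into (\ref{diverge_1}) and (\ref{diverge_2}) and shows each converges to a non-zero constant using $\sum_i\tau_i^{2}<\infty$, $\sum_i\theta_i^{2}<\infty$ and the $P_{\theta}$-a.s.\ finiteness of $\sum_i\tau_i^{2}(x_i-\theta_i)^{2}$; you instead invoke the Kakutani dichotomy and exclude singularity by showing that the sum of coordinatewise Kullback--Leibler divergences is finite a.s.\ (same ingredients: $\theta\in l_{2}$, $\tau\in l_{2}$, and a Tonelli bound on the noise terms). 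One point to tighten: saying ``the conditional loss is finite'' already uses that the loss equals the coordinatewise sum, i.e.\ additivity of the divergence over the countable product, which is only immediate once equivalence is known; the circularity is removed in one line by Jensen's inequality, $\int\sqrt{\mathrm{d}P_{i}\,\mathrm{d}Q_{i}}\geq\exp\{-\tfrac12 D(P_{i}\Vert Q_{i})\}$, so that your summability estimate makes the Kakutani affinity product positive and equivalence (hence additivity, or your termwise computation) follows --- alternatively you may cite the standard fact that the divergence is additive over countable products in $[0,\infty]$. For the risk identity the paper passes from the a.s.\ limit of the finite-dimensional log-likelihood ratios to the expectation via the zero-mean, $L^{2}$-bounded martingale $S_{d}$, whereas you integrate termwise and justify the interchange by splitting each summand into a nonnegative part (Tonelli) plus constants $c_{i}$ with $\sum_i|c_i|\leq\sum_i\tau_i^{2}/\tilde{\varepsilon}^{2}<\infty$; this is more elementary and avoids the martingale machinery, at the price of the additivity fact just mentioned for the conditional loss itself. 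Your closing identities, $s_i^{2}/\tilde{\varepsilon}^{2}=(1+\tau_i^{2}/v^{2}_{\varepsilon,\tilde{\varepsilon}})/(1+\tau_i^{2}/v^{2}_{\varepsilon})$ and the matching of the bias/variance term with the difference of ratios in (\ref{KL_Bayes_productNormal}), check out.
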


The second lemma provides the Bayesian predictive distribution that is minimax among the sub class of $\mathcal{D}$.
The proof is provided in Appendix \ref{Appendix:ProofofSection2}.

\begin{lem}
\label{LinearMinimax}
Assume that $0 < a_{1} \leq a_{2} \leq \cdots \nearrow \infty$.
Then,
for any $\varepsilon>0$ and any $\tilde{\varepsilon}>0$,
$T(\varepsilon,\tilde{\varepsilon})$ is finite
and
$\lambda(\varepsilon,\tilde{\varepsilon})$ is uniquely determined.
Further,
\begin{eqnarray*}
	\mathop{\inf}_{\tau\in l_{2}}\mathop{\sup}_{\theta\in\Theta(a,B)} R(\theta,Q_{\mathrm{G}_{\tau}})
	&=&\mathop{\sup}_{\theta\in\Theta(a,B)} \mathop{\inf}_{\tau\in l_{2}} R(\theta,Q_{\mathrm{G}_{\tau}})
\nonumber\\	
&=&\mathop{\sup}_{\theta\in\Theta(a,B)} R(\theta, Q_{\mathrm{G}_{\tau=\tau^{*}(\varepsilon,\tilde{\varepsilon})}})
\nonumber\\
&=&\mathop{\sum}_{i=1}^{T(\varepsilon,\tilde{\varepsilon})}\frac{1}{2}
\log\left(\frac{1+(\tau^{*}_{i}(\varepsilon,\tilde{\varepsilon}))^{2}/v^{2}_{\varepsilon,\tilde{\varepsilon}}}
{1+(\tau^{*}_{i}(\varepsilon,\tilde{\varepsilon}))^{2}/v^{2}_{\varepsilon}}\right).
\end{eqnarray*}
\end{lem}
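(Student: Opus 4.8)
The plan is to exhibit the pair $(\theta^{\circ},\tau^{*})$, with $\tau^{*}=\tau^{*}(\varepsilon,\tilde{\varepsilon})$ and $\theta^{\circ}=\tau^{*}$, as a saddle point of $R(\theta,Q_{\mathrm{G}_{\tau}})$ on $\Theta(a,B)\times l_{2}$ and then to evaluate its value. Everything rests on Lemma \ref{GaussianMeasures}: writing the $i$-th summand of (\ref{KL_Bayes_productNormal}) as $\bar{r}_{i}(\tau_{i})+c_{i}(\tau_{i})\theta_{i}^{2}$, where $c_{i}(\tau_{i})=\tfrac12\{(v^{2}_{\varepsilon,\tilde{\varepsilon}}+\tau_{i}^{2})^{-1}-(v^{2}_{\varepsilon}+\tau_{i}^{2})^{-1}\}>0$ (because $v^{2}_{\varepsilon,\tilde{\varepsilon}}<v^{2}_{\varepsilon}$) and $\bar{r}_{i}(\tau_{i})$ collects the $\theta$-free terms, the risk $R(\theta,Q_{\mathrm{G}_{\tau}})$ is a coordinatewise-separable affine function of $(\theta_{i}^{2})_{i}$; moreover $\bar{r}_{i}\ge 0$ with $\bar{r}_{i}(0)=0$, and substituting $\theta_{i}=\tau_{i}=s$ collapses the summand to $\tfrac12\log\{(1+s^{2}/v^{2}_{\varepsilon,\tilde{\varepsilon}})/(1+s^{2}/v^{2}_{\varepsilon})\}$, since the two rational terms become $\tfrac12$ and $-\tfrac12$.

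First I would settle the preliminary claims. Set $g(\lambda):=\sum_{i}a_{i}^{2}(\tau^{*}_{i}(\lambda))^{2}$, where $(\tau^{*}_{i}(\lambda))^{2}$ is the right-hand side of (\ref{optimalvariance}). Using the elementary identity $v^{2}_{\varepsilon,\tilde{\varepsilon}}v^{2}_{\varepsilon}=\tilde{\varepsilon}^{2}(v^{2}_{\varepsilon}-v^{2}_{\varepsilon,\tilde{\varepsilon}})$, one checks that $(\tau^{*}_{i}(\lambda))^{2}>0$ precisely when $1/(\lambda a_{i}^{2})>2\tilde{\varepsilon}^{2}$; since $a_{i}\nearrow\infty$, for each fixed $\lambda>0$ only finitely many indices qualify, so $g(\lambda)<\infty$ and, once $\lambda$ is fixed, $T(\varepsilon,\tilde{\varepsilon})<\infty$. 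The bracket in (\ref{optimalvariance}) is continuous and strictly decreasing in $\lambda$, hence so is $g$ on $(0,1/(2\tilde{\varepsilon}^{2}a_{1}^{2}))$ (continuity of the sum on compact subintervals holds because only a fixed finite index set contributes there), with $g(\lambda)\to\infty$ as $\lambda\to 0^{+}$ and $g(\lambda)\downarrow 0$ at the right endpoint. Thus $g(\lambda)=B$ has a unique root for $B\in(0,\infty)$, and $\tau^{*}$ lies in $l_{2}$ because it is finitely supported.

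Next come the two inequalities bracketing the value. For the maximization, affineness in $(\theta_{i}^{2})$ with nonnegative coefficients gives $\sup_{\theta\in\Theta(a,B)}R(\theta,Q_{\mathrm{G}_{\tau}})=\sum_{i}\bar{r}_{i}(\tau_{i})+B\sup_{i}c_{i}(\tau_{i})/a_{i}^{2}$. The crucial equalizer identity is that, after simplifying $(\tau^{*}_{i})^{2}+v^{2}_{\varepsilon,\tilde{\varepsilon}}$ and $(\tau^{*}_{i})^{2}+v^{2}_{\varepsilon}$ through (\ref{optimalvariance}), their product equals $(v^{2}_{\varepsilon}-v^{2}_{\varepsilon,\tilde{\varepsilon}})/(2\lambda a_{i}^{2})$ for $i\le T$, whence $c_{i}(\tau^{*}_{i})/a_{i}^{2}=\lambda$ for all $i\le T$, while $c_{i}(0)/a_{i}^{2}=1/(2a_{i}^{2}\tilde{\varepsilon}^{2})\le\lambda$ for $i>T$. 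Hence $\sup_{\theta}R(\theta,Q_{\mathrm{G}_{\tau^{*}}})=\sum_{i}\bar{r}_{i}(\tau^{*}_{i})+\lambda B$; since $\lambda B=\lambda\sum_{i\le T}a_{i}^{2}(\tau^{*}_{i})^{2}=\sum_{i}c_{i}(\tau^{*}_{i})(\tau^{*}_{i})^{2}$ (the $i>T$ terms vanish), this equals $\sum_{i}\{\bar{r}_{i}(\tau^{*}_{i})+c_{i}(\tau^{*}_{i})(\tau^{*}_{i})^{2}\}$, i.e.\ $\sum_{i\le T}\tfrac12\log\{(1+(\tau^{*}_{i})^{2}/v^{2}_{\varepsilon,\tilde{\varepsilon}})/(1+(\tau^{*}_{i})^{2}/v^{2}_{\varepsilon})\}=:V$ by the collapse noted above. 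For the minimization, take $\theta^{\circ}=\tau^{*}$: because the $i$-th summand is affine in $\theta_{i}^{2}$ and $\int\theta_{i}^{2}\,d\mathcal{N}(0,(\tau^{*}_{i})^{2})=(\tau^{*}_{i})^{2}$, the $i$-th coordinate risk at $\theta^{\circ}_{i}$ equals the Bayes risk of $Q_{\mathrm{G}_{\tau_{i}}}$ under the prior $\mathcal{N}(0,(\tau^{*}_{i})^{2})$, which—by optimality of the Bayesian predictive distribution under Kullback--Leibler loss—is minimized in $\tau_{i}$ at $\tau_{i}=\tau^{*}_{i}$, with value $\tfrac12\log\{(1+(\tau^{*}_{i})^{2}/v^{2}_{\varepsilon,\tilde{\varepsilon}})/(1+(\tau^{*}_{i})^{2}/v^{2}_{\varepsilon})\}$. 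Summing the nonnegative coordinate infima, $\inf_{\tau\in l_{2}}R(\theta^{\circ},Q_{\mathrm{G}_{\tau}})=V$, attained at $\tau=\tau^{*}$. Since $\sup_{\theta}\inf_{\tau}\le\inf_{\tau}\sup_{\theta}$ always, the chain $V\le\sup_{\theta}\inf_{\tau}R\le\inf_{\tau}\sup_{\theta}R\le\sup_{\theta}R(\theta,Q_{\mathrm{G}_{\tau^{*}}})=V$ forces all four quantities to coincide, which is the assertion.

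I expect the main obstacle to be computational rather than conceptual: verifying the equalizer identity $c_{i}(\tau^{*}_{i})/a_{i}^{2}=\lambda$ and the threshold description of $T(\varepsilon,\tilde{\varepsilon})$ from the cumbersome formula (\ref{optimalvariance}), and checking that $\sum_{i}\bar{r}_{i}(\tau^{*}_{i})+\lambda B$ telescopes into the clean logarithmic sum $V$. A secondary point to handle carefully is the legitimacy of interchanging the supremum over $\theta$ and the infimum over $\tau$ with the infinite sum; this is justified by the nonnegativity of $\bar{r}_{i}$ and of each coordinate risk, by the finiteness of $T(\varepsilon,\tilde{\varepsilon})$, and by the fact that the minimizing $\tau$ is $\tau^{*}$ itself, which is finitely supported and hence genuinely lies in $l_{2}$.
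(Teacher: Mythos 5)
Your proposal is correct, and it follows essentially the same saddle-point route as the paper: both arguments exhibit $(\theta,\tau)=(\tau^{*},\tau^{*})$ as a saddle point of the explicit risk formula (\ref{KL_Bayes_productNormal}) from Lemma \ref{GaussianMeasures}, use the weak-duality chain $\sup_{\theta}\inf_{\tau}\le\inf_{\tau}\sup_{\theta}$, exploit that the risk is affine in $(\theta_{i}^{2})_{i}$, and use the constraint $\sum_{i}a_{i}^{2}(\tau^{*}_{i})^{2}=B$ together with the identity $(v^{2}_{\varepsilon,\tilde{\varepsilon}}+(\tau^{*}_{i})^{2})(v^{2}_{\varepsilon}+(\tau^{*}_{i})^{2})=(v^{2}_{\varepsilon}-v^{2}_{\varepsilon,\tilde{\varepsilon}})/(2\lambda a_{i}^{2})$ for $i\le T$. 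There are two local differences. First, writing $V$ for the claimed logarithmic sum, the paper evaluates $\sup_{\theta\in\Theta(a,B)}R(\theta,Q_{\mathrm{G}_{\tau=\tau^{*}}})$ by showing $V-R(\theta,Q_{\mathrm{G}_{\tau=\tau^{*}}})\ge\lambda\sum_{i}a_{i}^{2}\left((\tau^{*}_{i})^{2}-\theta_{i}^{2}\right)\ge 0$ with equality at $\theta=\tau^{*}$, whereas you phrase the same computation as an equalizer condition, $c_{i}(\tau^{*}_{i})/a_{i}^{2}=\lambda$ for $i\le T$ and $\le\lambda$ for $i>T$, plus a linear maximization over the ellipsoid; the content is equivalent. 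Second, for the inner minimization over $\tau$ the paper notes directly (by calculus) that the coordinate function $r_{i}(z)$ is minimized at $z=\theta_{i}$ for every $\theta$, which also yields the identity (\ref{Min_tau}) reused later in the paper, while you only treat the single point $\theta^{\circ}=\tau^{*}$ and replace the calculus by an affinity-plus-Bayes-risk argument invoking the Kullback--Leibler optimality of Bayesian predictive distributions; this is a clean alternative, but it imports that standard fact (and proves slightly less, which suffices for the lemma since the weak-duality chain closes the loop). Your treatment of the preliminary claims (finiteness of $T$, existence and uniqueness of $\lambda$ via monotonicity and the endpoint limits of $g$) matches, and indeed slightly elaborates, the paper's terser argument.
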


The third lemma provides the upper bound of the minimax risk.

\begin{lem}
\label{upperboundMinimax}
Assume that $0 < a_{1} \leq a_{2} \leq \cdots \nearrow \infty$.
Then, 
for any $\varepsilon>0$ and any $\tilde{\varepsilon}>0$,
\begin{align*}
\mathop{\inf}_{\widehat{Q}\in \mathcal{D}}\mathop{\sup}_{\theta\in\Theta(a,B)} R(\theta,\widehat{Q})
\leq
\mathop{\sum}_{i=1}^{T(\varepsilon,\tilde{\varepsilon})}\frac{1}{2}
\log\left(\frac{1+(\tau^{*}_{i}(\varepsilon,\tilde{\varepsilon}))^{2}/v^{2}_{\varepsilon,\tilde{\varepsilon}}}
{1+(\tau^{*}_{i}(\varepsilon,\tilde{\varepsilon}))^{2}/v^{2}_{\varepsilon}}\right).
\end{align*}
\end{lem}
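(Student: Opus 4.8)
The plan is to exhibit a single predictive distribution whose worst-case risk over $\Theta(a,B)$ is bounded by the claimed quantity, which immediately bounds the infimum over all of $\mathcal{D}$. The natural candidate is the Bayesian predictive distribution $Q_{\mathrm{G}_{\tau=\tau^{*}(\varepsilon,\tilde{\varepsilon})}}$ itself, since Lemma \ref{LinearMinimax} already identifies its worst-case risk over the ellipsoid as exactly $\sum_{i=1}^{T(\varepsilon,\tilde{\varepsilon})}\frac{1}{2}\log\big((1+(\tau^{*}_{i})^{2}/v^{2}_{\varepsilon,\tilde{\varepsilon}})/(1+(\tau^{*}_{i})^{2}/v^{2}_{\varepsilon})\big)$. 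The only gap to close is that Lemma \ref{GaussianMeasures} and Lemma \ref{LinearMinimax} are stated for priors with $\tau\in l_{2}$, so I must first verify that $\tau^{*}(\varepsilon,\tilde{\varepsilon})\in l_{2}$, i.e. that $Q_{\mathrm{G}_{\tau=\tau^{*}}}$ is a legitimate element of $\mathcal{D}$ to which those lemmas apply.

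First I would invoke Lemma \ref{LinearMinimax} to conclude that $T(\varepsilon,\tilde{\varepsilon})$ is finite and $\lambda(\varepsilon,\tilde{\varepsilon})$ is well defined for every fixed $\varepsilon,\tilde{\varepsilon}>0$. Since $(\tau^{*}_{i})^{2}=0$ for all $i>T(\varepsilon,\tilde{\varepsilon})$ by the definition \eqref{truncationnumber} of $T$, the sequence $\tau^{*}$ has only finitely many nonzero coordinates; in particular $\tau^{*}\in l_{2}$ trivially, and moreover $\sum_i a_i^2 (\tau^{*}_i)^2 = B < \infty$. Hence $\mathrm{G}_{\tau=\tau^{*}}$ is a well-defined prior satisfying the hypotheses of Lemmas \ref{GaussianMeasures} and \ref{LinearMinimax}, so $Q_{\mathrm{G}_{\tau=\tau^{*}}}\in\mathcal{D}$.

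Next I would simply chain the inequalities: taking the infimum over $\widehat{Q}\in\mathcal{D}$ can only decrease the value relative to the particular choice $\widehat{Q}=Q_{\mathrm{G}_{\tau=\tau^{*}}}$, so
\begin{align*}
\mathop{\inf}_{\widehat{Q}\in \mathcal{D}}\mathop{\sup}_{\theta\in\Theta(a,B)} R(\theta,\widehat{Q})
\leq
\mathop{\sup}_{\theta\in\Theta(a,B)} R(\theta, Q_{\mathrm{G}_{\tau=\tau^{*}(\varepsilon,\tilde{\varepsilon})}}),
\end{align*}
and the right-hand side equals the asserted sum by the third displayed equality in Lemma \ref{LinearMinimax}. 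This completes the argument.

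The argument has essentially no obstacle once Lemma \ref{LinearMinimax} is in hand; the only point requiring a word of care is the membership $Q_{\mathrm{G}_{\tau=\tau^{*}}}\in\mathcal{D}$, which hinges on the finiteness of $T(\varepsilon,\tilde{\varepsilon})$ — itself a consequence of $a_i\nearrow\infty$, since $1/(\lambda a_i^2)>2\tilde\varepsilon^2$ can hold only for finitely many $i$. If one preferred to avoid even citing Lemma \ref{LinearMinimax} for the worst-case value and only borrow its structural conclusions ($T$ finite, $\lambda$ determined), one could alternatively bound $\sup_{\theta\in\Theta(a,B)} R(\theta,Q_{\mathrm{G}_{\tau=\tau^{*}}})$ directly from the risk formula \eqref{KL_Bayes_productNormal} in Lemma \ref{GaussianMeasures}, using that for $i\le T$ the Bayes risk term dominates the two fractional terms uniformly in $\theta$ on the ellipsoid and that the tail $i>T$ contributes nothing; but citing Lemma \ref{LinearMinimax} is cleaner.
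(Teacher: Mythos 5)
Your proposal is correct and matches the paper's own proof, which likewise bounds the minimax risk by the sup-risk of $Q_{\mathrm{G}_{\tau=\tau^{*}(\varepsilon,\tilde{\varepsilon})}}$ and invokes Lemma \ref{LinearMinimax} (the paper states this in one line, noting that $\{Q_{\mathrm{G}_{\tau}}:\tau\in l_{2}\}\subset\mathcal{D}$). Your extra check that $\tau^{*}\in l_{2}$ via the finiteness of $T(\varepsilon,\tilde{\varepsilon})$ is a sensible elaboration of the same argument, not a different route.
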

\begin{proof}
Since the class $\{Q_{\mathrm{G}_{\tau}}(\cdot|\cdot):\tau\in l_{2}\}$ is included in $\mathcal{D}$,
the result follows from Lemma \ref{LinearMinimax}.
\end{proof}

We introduce the notations for providing the lower bound of the minimax risk.
These notations are also used in Lemma \ref{Oracle}.
Fix an arbitrary positive integer $d$.
Let $\theta^{(d)}$ be $(\theta_{1},\ldots,\theta_{d})$.
Let $x^{(d)}$ be $(x_{1},\ldots,x_{d})$.
Let $P^{(d)}_{\theta^{(d)}}$ and $Q^{(d)}_{\theta^{(d)}}$ be $\mathop{\otimes}_{i=1}^{d}\mathcal{N}(\theta_{i},\varepsilon^{2})$
and $\mathop{\otimes}_{i=1}^{d}\mathcal{N}(\theta_{i},\tilde{\varepsilon}^{2})$, respectively.
Let $\Theta^{(d)}(a,B)$ be the $d$-dimensional parameter space defined by
\begin{eqnarray*}
\Theta^{(d)}(a,B):=\left\{\theta^{(d)}=(\theta_{1},\ldots,\theta_{d}):\mathop{\sum}_{i=1}^{d}a^{2}_{i}\theta^{2}_{i}\leq B\right\}.
\end{eqnarray*}
Let $R_{d}(\theta^{(d)},\widehat{Q}^{(d)}(\cdot;\cdot))$ be the $d$-dimensional Kullback--Leibler risk 
\begin{align*}
	R_{d}(\theta^{(d)},\widehat{Q}^{(d)}):=
	\int\int
	\log\frac{\mathrm{d}Q^{(d)}_{\theta^{(d)}}}{\mathrm{d}\widehat{Q}^{(d)}(\cdot;X^{(d)}=x^{(d)})}(y^{(d)})
	\mathrm{d}Q^{(d)}_{\theta^{(d)}}(y^{(d)})
	\mathrm{d}P^{(d)}_{\theta^{(d)}}(x^{(d)})
\end{align*}
of
predictive distribution $\widehat{Q}^{(d)}$ on $(\mathbb{R}^{d},\mathcal{R}^{d})$.
Let $R_{d}(\Theta^{(d)}(a,B))$ be the minimax risk
\begin{eqnarray*}
	R_{d}(\Theta^{(d)}(a,B)):=
	\mathop{\inf}_{\widehat{Q}^{(d)}\in\mathcal{D}^{(d)}}
\mathop{\sup}_{\theta^{(d)}\in\Theta^{(d)}(a,B)}
R_{d}(\theta^{(d)},\widehat{Q}^{(d)}),
\end{eqnarray*}
where $\mathcal{D}^{(d)}$ is $\{\mathbb{R}^{d}\to\mathcal{A}^{(d)}\}$ 
with the whole set
$\mathcal{A}^{(d)}$
of probability distributions on $(\mathbb{R}^{d},\mathcal{R}^{d})$.

The fourth lemma shows that the minimax risk in the infinite sequence model is bounded below by the minimax risk in the finite dimensional sequence model.
The proof is provided in Appendix \ref{Appendix:ProofofSection2}.
\begin{lem}
\label{Lowerboundbysubproblem}
Let $d$ be any positive integer.
Then, for any $\varepsilon>0$ and any $\tilde{\varepsilon}>0$,
\begin{eqnarray*}
\mathop{\inf}_{\widehat{Q}\in \mathcal{D}}\mathop{\sup}_{\theta\in\Theta(a,B)}R(\theta,\widehat{Q})
&\geq&
R_{d}(\Theta^{(d)}(a,B)).
\end{eqnarray*}
\end{lem}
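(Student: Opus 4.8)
The plan is to reduce the infinite-dimensional prediction problem to the $d$-dimensional one by showing that any infinite-dimensional predictive distribution $\widehat{Q}\in\mathcal{D}$ induces a $d$-dimensional predictive distribution whose risk, for parameters supported on the first $d$ coordinates, is no larger than the risk of $\widehat{Q}$ at the corresponding infinite-dimensional parameter. First I would observe that $\Theta^{(d)}(a,B)$ embeds into $\Theta(a,B)$ via $\theta^{(d)}\mapsto\theta=(\theta_1,\dots,\theta_d,0,0,\dots)$: indeed $\sum_{i=1}^\infty a_i^2\theta_i^2=\sum_{i=1}^d a_i^2\theta_i^2\le B$, so such $\theta$ lies in $\Theta(a,B)$. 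Hence
\begin{align*}
\inf_{\widehat{Q}\in\mathcal{D}}\sup_{\theta\in\Theta(a,B)}R(\theta,\widehat{Q})
\ge
\inf_{\widehat{Q}\in\mathcal{D}}\sup_{\theta^{(d)}\in\Theta^{(d)}(a,B)}R\big((\theta^{(d)},0,0,\dots),\widehat{Q}\big),
\end{align*}
and it remains to show the right-hand side is at least $R_d(\Theta^{(d)}(a,B))$.

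The key step is the following marginalization/projection argument. Fix $\widehat{Q}\in\mathcal{D}$ and a parameter $\theta=(\theta^{(d)},0,0,\dots)$. Under $P_\theta$ and $Q_\theta$ the coordinates split as a product, and in particular the first $d$ coordinates of $X$ and of $Y$ are distributed as $P^{(d)}_{\theta^{(d)}}$ and $Q^{(d)}_{\theta^{(d)}}$, while the remaining coordinates carry no information about $\theta^{(d)}$. Define a $d$-dimensional predictive distribution $\widehat{Q}^{(d)}$ as follows: given $x^{(d)}\in\mathbb{R}^d$, draw the tail $(X_{d+1},X_{d+2},\dots)$ from its (parameter-free, since $\theta_i=0$ for $i>d$) marginal $\bigotimes_{i>d}\mathcal{N}(0,\varepsilon^2)$, form the full vector $x$, evaluate $\widehat{Q}(\cdot;x)$, and take $\widehat{Q}^{(d)}(\cdot;x^{(d)})$ to be the marginal of $\widehat{Q}(\cdot;x)$ on the first $d$ coordinates, averaged over the randomly drawn tail. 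By the chain rule for Kullback--Leibler divergence (the divergence of a joint law dominates that of any marginal), for each fixed $x$ we have
\begin{align*}
l(\theta,\widehat{Q}(\cdot;x))=\int\log\frac{\mathrm{d}Q_\theta}{\mathrm{d}\widehat{Q}(\cdot;x)}\,\mathrm{d}Q_\theta
\ge
\int\log\frac{\mathrm{d}Q^{(d)}_{\theta^{(d)}}}{\mathrm{d}\widehat{Q}^{(d)}_x(\cdot;x^{(d)})}\,\mathrm{d}Q^{(d)}_{\theta^{(d)}},
\end{align*}
where $\widehat{Q}^{(d)}_x$ is the first-$d$-coordinate marginal of $\widehat{Q}(\cdot;x)$; then integrating over $x\sim P_\theta$ and using convexity of KL divergence in its second argument (Jensen, to pull the tail-average inside) gives $R(\theta,\widehat{Q})\ge R_d(\theta^{(d)},\widehat{Q}^{(d)})\ge R_d(\Theta^{(d)}(a,B))$. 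Taking the supremum over $\theta^{(d)}\in\Theta^{(d)}(a,B)$ and then the infimum over $\widehat{Q}$ yields the claim.

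The main obstacle is the measure-theoretic bookkeeping on $(\mathbb{R}^\infty,\mathcal{R}^\infty)$: one must check that the marginalization maps are measurable, that the disintegration of $\widehat{Q}(\cdot;x)$ onto the first $d$ coordinates exists and depends measurably on $x$, and that the chain-rule inequality for KL divergence is valid in this generality (including the case $l(\theta,\widehat{Q}(\cdot;x))=\infty$, which is trivial, and the case where $Q^{(d)}_{\theta^{(d)}}$ fails to be absolutely continuous with respect to the marginal $\widehat{Q}^{(d)}_x$, which forces the finite-dimensional loss to be $\infty$ as well, so the inequality holds). A clean way to handle all of this at once is to note that the coordinate projection $\pi_d:\mathbb{R}^\infty\to\mathbb{R}^d$ together with independence of the tail lets one write everything as an application of the data-processing inequality for KL divergence under the (randomized) channel "forget all but the first $d$ coordinates", so that no explicit disintegration is needed — only the standard fact that KL divergence does not increase under a Markov kernel. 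I would carry out the argument in that form.
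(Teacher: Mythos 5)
Your proposal is correct and follows essentially the same route as the paper: embed $\Theta^{(d)}(a,B)$ by zero-padding, drop the conditional (tail) part of the Kullback--Leibler divergence via the chain rule (the paper's first Jensen step through the disintegration of $\widehat{Q}(\cdot;x)$ along $\pi^{(d)}$), and then average the induced $d$-dimensional predictive distribution over the $\theta$-free tail of $X$ using convexity of the divergence in its second argument (the paper's second Jensen step, producing $\widehat{Q}^{(d)}(\cdot;\pi^{(d)}(X)=x^{(d)})$). The only cosmetic point is that the inequality $R_d(\theta^{(d)},\widehat{Q}^{(d)})\geq R_d(\Theta^{(d)}(a,B))$ should be read after taking the supremum over $\theta^{(d)}$, which your concluding sentence already does.
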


The fifth lemma provides the asymptotic minimax risk in a high-dimensional sequence model.
It is due to \citet{XuandLiang(2010)}.

\begin{lem}[Theorem 4.2 in \citet{XuandLiang(2010)}]
\label{MinimaxLowerboundinsieve}
Let $\tau^{*}(\varepsilon,\tilde{\varepsilon})$ be defined by (\ref{optimalvariance}).
Let $T(\varepsilon,\tilde{\varepsilon})$ be defined by (\ref{truncationnumber}).
Let $d(\varepsilon)$ be $\lfloor 1/\varepsilon^{2}\rfloor$ where $\lfloor x\rfloor:=\max\{n\in\mathbb{Z}:n\leq x\}$.
Assume that $0 < a_{1}\leq a_{2} \leq \cdots \nearrow \infty$.
If
$1/\tilde{\varepsilon}=\mathrm{O}(1/\varepsilon)$ as $\varepsilon\to 0$
and
$\log(1/\varepsilon^{2})\mathop{\sum}_{i=1}^{d(\varepsilon)}a_{i}^{4}\left(\tau^{*}_{i}(\varepsilon,\tilde{\varepsilon})\right)^{4}
=\mathrm{o}(1) \text{ as $\varepsilon\to 0$}$,
then 
\begin{eqnarray*}
\mathop{\lim}_{\varepsilon\to 0}
\left[R_{d(\varepsilon)}(\Theta^{(d(\varepsilon))}(a,B))
	\bigg/
\mathop{\sum}_{i=1}^{T(\varepsilon,\tilde{\varepsilon})}\frac{1}{2}
\log\left(\frac{1+(\tau^{*}_{i}(\varepsilon,\tilde{\varepsilon}))^{2}/v^{2}_{\varepsilon,\tilde{\varepsilon}}}
{1+(\tau^{*}_{i}(\varepsilon,\tilde{\varepsilon}))^{2}/v^{2}_{\varepsilon}}\right)
\right]=1.
\end{eqnarray*}
\end{lem}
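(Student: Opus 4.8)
The plan is to sandwich $R_{d(\varepsilon)}\bigl(\Theta^{(d(\varepsilon))}(a,B)\bigr)$ between $(1-\mathrm o(1))M_{\varepsilon}$ and $(1+\mathrm o(1))M_{\varepsilon}$, writing $M_{\varepsilon}$ for the sum $\sum_{i=1}^{T(\varepsilon,\tilde\varepsilon)}\frac12\log\bigl((1+(\tau^{*}_{i})^{2}/v^{2}_{\varepsilon,\tilde\varepsilon})/(1+(\tau^{*}_{i})^{2}/v^{2}_{\varepsilon})\bigr)$ appearing in the displayed ratio (abbreviating $\tau^{*}_{i}=\tau^{*}_{i}(\varepsilon,\tilde\varepsilon)$). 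The upper bound is the easy half: the $d$-dimensional Bayesian predictive distribution $Q^{(d)}_{\mathrm{G}_{\tau^{*}}}$ based on $\mathrm{G}_{\tau=\tau^{*}(\varepsilon,\tilde\varepsilon)}$ lies in $\mathcal D^{(d)}$, and reading off its risk from the finite-dimensional analogue of (\ref{KL_Bayes_productNormal}) and repeating the Karush--Kuhn--Tucker computation behind Lemma~\ref{LinearMinimax} with the sum truncated at $d$ gives $\sup_{\theta^{(d)}\in\Theta^{(d)}(a,B)}R_{d}(\theta^{(d)},Q^{(d)}_{\mathrm{G}_{\tau^{*}}})=M_{\varepsilon}$ as soon as $T(\varepsilon,\tilde\varepsilon)\le d$. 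So I would first check that $T(\varepsilon,\tilde\varepsilon)\le d(\varepsilon)=\lfloor 1/\varepsilon^{2}\rfloor$ for all small $\varepsilon$ (which follows from $a_{i}\nearrow\infty$ together with the two hypotheses), whence $R_{d(\varepsilon)}(\Theta^{(d(\varepsilon))}(a,B))\le M_{\varepsilon}$.

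For the lower bound the first step is the standard reduction to a Bayes risk: for any prior $\pi$ supported on $\Theta^{(d)}(a,B)$ one has $R_{d}(\Theta^{(d)}(a,B))\ge\inf_{\widehat Q^{(d)}\in\mathcal D^{(d)}}\int R_{d}(\theta^{(d)},\widehat Q^{(d)})\,\mathrm d\pi$, and since the Bayesian predictive distribution is the Bayes rule under Kullback--Leibler loss (the marginalization identity; cf.\ \citet{Komaki(2001)}), this infimum equals $\int R_{d}(\theta^{(d)},Q^{(d)}_{\pi})\,\mathrm d\pi$, i.e.\ the conditional mutual information $I_{\pi}(Y^{(d)};\theta^{(d)}\mid X^{(d)})$ under the joint law induced by $\pi$. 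For a product-Gaussian prior $\mathrm{G}_{\tau}$ this quantity is, by (\ref{KL_Bayes_productNormal}) with $\theta_{i}^{2}$ replaced by $\mathbb E\theta_{i}^{2}=\tau_{i}^{2}$, exactly $\sum_{i=1}^{d}\frac12\log\bigl((1+\tau_{i}^{2}/v^{2}_{\varepsilon,\tilde\varepsilon})/(1+\tau_{i}^{2}/v^{2}_{\varepsilon})\bigr)$; the one difficulty is that $\mathrm{G}_{\tau^{*}(\varepsilon,\tilde\varepsilon)}$ is not supported on the ellipsoid.

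To get around this I would fix $\delta\in(0,1)$ and take $\pi_{\delta}$ to be $\mathrm{G}_{(1-\delta)\tau^{*}(\varepsilon,\tilde\varepsilon)}$ (whose coordinates with $i>T(\varepsilon,\tilde\varepsilon)$ are point masses at $0$) conditioned to $\Theta^{(d)}(a,B)$. Under $\mathrm{G}_{(1-\delta)\tau^{*}}$ one has $\mathbb E\sum_{i}a_{i}^{2}\theta_{i}^{2}=(1-\delta)^{2}B<B$ with variance of order $\sum_{i\le d(\varepsilon)}a_{i}^{4}(\tau^{*}_{i})^{4}$, so the hypothesis $\log(1/\varepsilon^{2})\sum_{i\le d(\varepsilon)}a_{i}^{4}(\tau^{*}_{i})^{4}=\mathrm o(1)$ forces $p_{\varepsilon,\delta}:=\mathrm{G}_{(1-\delta)\tau^{*}}(\Theta^{(d)}(a,B))\to1$ for each fixed $\delta$. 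Splitting the mutual information on the indicator of $\Theta^{(d)}(a,B)$ (a function of $\theta$) by the chain rule gives
\[
I_{\pi_{\delta}}(Y^{(d)};\theta^{(d)}\mid X^{(d)})\;\ge\;\frac{1}{p_{\varepsilon,\delta}}\Bigl(m_{\varepsilon,\delta}-\log 2-(1-p_{\varepsilon,\delta})\,I\bigl(Y^{(d)};\theta^{(d)}\mid X^{(d)},\,\theta^{(d)}\notin\Theta^{(d)}(a,B)\bigr)\Bigr),
\]
where $m_{\varepsilon,\delta}$ is the (explicit) Bayes risk of $\mathrm{G}_{(1-\delta)\tau^{*}}$; an easy termwise inequality for $u\mapsto\log\frac{1+u/v^{2}_{\varepsilon,\tilde\varepsilon}}{1+u/v^{2}_{\varepsilon}}$ shows $m_{\varepsilon,\delta}\ge(1-\delta)^{2}M_{\varepsilon}$. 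Provided the last term in the display is $\mathrm o(M_{\varepsilon})$, letting $\varepsilon\to0$ and then $\delta\to0$ yields $\liminf_{\varepsilon\to0}R_{d(\varepsilon)}(\Theta^{(d(\varepsilon))}(a,B))/M_{\varepsilon}\ge1$, and combined with the upper bound this gives the claimed limit.

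The main obstacle is exactly showing $(1-p_{\varepsilon,\delta})\,I\bigl(Y^{(d)};\theta^{(d)}\mid X^{(d)},\theta^{(d)}\notin\Theta^{(d)}(a,B)\bigr)=\mathrm o(M_{\varepsilon})$: bounding this tail-conditioned mutual information by a sum of $d(\varepsilon)\asymp 1/\varepsilon^{2}$ per-coordinate differential entropies is too crude, because the tail bound on $1-p_{\varepsilon,\delta}$ available from a sub-exponential deviation estimate for the quadratic form $\sum_{i}a_{i}^{2}\theta_{i}^{2}$ is not polynomially small in $\varepsilon$ (it decays more slowly than every fixed power $\varepsilon^{K}$). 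I expect one instead has to exploit that conditioning on $\{\sum_{i}a_{i}^{2}\theta_{i}^{2}>B\}$ is a soft constraint, under which each posterior coordinate variance stays of order $(\tau^{*}_{i})^{2}$, or else replace the conditioned Gaussian by a product of compactly supported, Pinsker-type coordinate priors and bound the Bayes risk from below by a van~Trees-type information inequality (as in the adaptive-estimation literature; cf.\ \citet{EfromovichandPinsker(1984)}). This is the point at which the full strength of $\log(1/\varepsilon^{2})\sum_{i\le d(\varepsilon)}a_{i}^{4}(\tau^{*}_{i})^{4}=\mathrm o(1)$ is needed --- the factor $\log(1/\varepsilon^{2})$ supplying a logarithmic-in-dimension safety margin --- while $1/\tilde\varepsilon=\mathrm O(1/\varepsilon)$ keeps the per-coordinate noise level comparable to $\varepsilon$ throughout; the complete argument is carried out in \citet{XuandLiang(2010)}.
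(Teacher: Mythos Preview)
The paper does not prove this lemma at all: it is stated as Theorem~4.2 of \citet{XuandLiang(2010)} and simply cited (``It is due to \citet{XuandLiang(2010)}''), with no argument given. So there is no ``paper's own proof'' to compare against; the paper treats the result as a black box imported from the literature.

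That said, your sketch is a reasonable outline of the standard route (and, broadly, of what Xu and Liang do): the upper bound via the Gaussian Bayesian predictive distribution is correct and your observation that $T(\varepsilon,\tilde\varepsilon)\le d(\varepsilon)$ eventually is exactly what makes the finite-$d$ upper bound coincide with $M_{\varepsilon}$. For the lower bound, reducing to a Bayes risk under a shrunken Gaussian prior conditioned to the ellipsoid and then peeling off the tail event is the right strategy. You are also right that the delicate step is controlling the mutual information on the complement $\{\theta^{(d)}\notin\Theta^{(d)}(a,B)\}$ and that the crude entropy bound $\mathrm O(d(\varepsilon)\log(1/\varepsilon))$ is too weak by itself; this is precisely where the hypothesis with the extra $\log(1/\varepsilon^{2})$ factor earns its keep, by making $1-p_{\varepsilon,\delta}$ decay fast enough (via a sub-Gaussian/sub-exponential tail for the quadratic form) to kill that term. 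You correctly identify this as the gap in your own argument and defer to \citet{XuandLiang(2010)} for the details, which is exactly what the present paper does too.
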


\vspace{4mm}

Based on these lemmas,
we present the proof of Theorem \ref{AsymptoticallyMinimaxTheorem}. 
\begin{proof}[Proof of Theorem \ref{AsymptoticallyMinimaxTheorem}]
From Lemma \ref{upperboundMinimax},
\begin{eqnarray*}
\inf_{\widehat{Q}\in\mathcal{D}}\sup_{\theta\in\Theta(a,B)}R(\theta,\widehat{Q})
\leq
\mathop{\sum}_{i=1}^{T(\varepsilon,\tilde{\varepsilon})}\frac{1}{2}
\log\left(\frac{1+(\tau^{*}_{i}(\varepsilon,\tilde{\varepsilon}))^{2}/v^{2}_{\varepsilon,\tilde{\varepsilon}}}
{1+(\tau^{*}_{i}(\varepsilon,\tilde{\varepsilon}))^{2}/v^{2}_{\varepsilon}}\right).
\end{eqnarray*}
From Lemma \ref{Lowerboundbysubproblem} with $d=\lfloor 1/\varepsilon^{2}\rfloor$ and Lemma \ref{MinimaxLowerboundinsieve},
\begin{eqnarray*}
\inf_{\widehat{Q}\in\mathcal{D}}\sup_{\theta\in\Theta(a,B)}R(\theta,\widehat{Q})
\geq
(1-\mathrm{o}(1))
\mathop{\sum}_{i=1}^{T(\varepsilon,\tilde{\varepsilon})}\frac{1}{2}
\log\left(\frac{1+(\tau^{*}_{i}(\varepsilon,\tilde{\varepsilon}))^{2}/v^{2}_{\varepsilon,\tilde{\varepsilon}}}
{1+(\tau^{*}_{i}(\varepsilon,\tilde{\varepsilon}))^{2}/v^{2}_{\varepsilon}}\right).
\end{eqnarray*}
This completes the proof.
\end{proof}

\subsection{Examples of asymptotically minimax predictive distributions}

In this subsection,
we provide the asymptotically minimax Kullback--Leibler risks 
and the asymptotically minimax predictive distributions
in the case that $\Theta(a,B)$ is a Sobolev ellipsoid
and
in the case that it is an exponential ellipsoid.

\subsubsection{The Sobolev ellipsoid}
The simplified Sobolev ellipsoid is
$\Theta_{\mathrm{Sobolev}}(\alpha,B)=\{\theta\in l_{2}:\sum_{i=1}^{\infty}i^{2\alpha}\theta^{2}_{i}\leq B\}$
with $\alpha>0$ and $B>0$.
We set $\tilde{\varepsilon}=\gamma\varepsilon$ for $\gamma>0$.
This setting is a slight generalization of Section 5 of \citet{XuandLiang(2010)},
in which
the asymptotic minimax Kullback--Leibler risk with $\gamma=1$ is obtained.

We expand $T:=T(\varepsilon,\tilde{\varepsilon})$ and $\tau^{*}(\varepsilon,\tilde{\varepsilon})$.
From the definition of $T$,
we have $2\lambda(\varepsilon,\tilde{\varepsilon})=\frac{1}{T^{2\alpha}\tilde{\varepsilon}^{2}}(1+\mathrm{o}(1))$.
Thus, we have
\begin{eqnarray*}
2B
&=&\frac{\varepsilon^{2}T^{2\alpha+1}}{\gamma^{2}+1}
\left[\int_{0}^{1}x^{2\alpha}\sqrt{1+4\gamma^{2}(\gamma^{2}+1)x^{-2\alpha}}\mathrm{d}x
-\frac{2\gamma^{2}+1}{2\alpha+1}\right]
(1+\mathrm{o}(1)),
\end{eqnarray*}
where we use the convergence of the Riemann sum $\sum_{i=1}^{T}r(i/T)1/T$ with 
the function
$r(x):=\sqrt{x^{4\alpha}+4\gamma^{2}(\gamma^{2}+1)x^{2\alpha}}$.
Then,
\begin{align}
	T(\varepsilon,\tilde{\varepsilon})&=
	\left(\frac{B}{\varepsilon^{2}}\right)^{1/(2\alpha+1)}
	\left[
	\frac{2(\gamma^{2}+1)}{\int_{0}^{1}x^{2\alpha}\sqrt{1+4\gamma^{2}(\gamma^{2}+1)x^{-2\alpha}}\mathrm{d}x
-\frac{2\gamma^{2}+1}{2\alpha+1}}\right]^{1/(2\alpha+1)}
\nonumber\\
&\quad\quad\times(1+\mathrm{o}(1))
\label{N_expansion}
\end{align}
and
\begin{eqnarray*}
	\left(\tau^{*}_{i}(\varepsilon,\tilde{\varepsilon})\right)^{2}=\frac{\varepsilon^{2}}{2}
	\left[
	\frac{1}{\gamma^{2}+1}\sqrt{1+4\gamma^{2}(\gamma^{2}+1)\left(\frac{i}{T}\right)^{-2\alpha}}
-\frac{2\gamma^{2}+1}{\gamma^{2}+1}
\right]_{+}
(1+\mathrm{o}(1)).
\end{eqnarray*}

\begin{figure}[h!]
	\begin{center}
		\includegraphics[width=80mm]{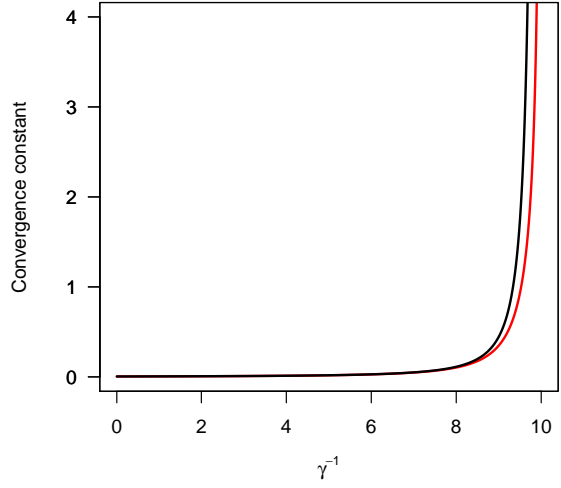}
	\end{center}
	\caption{Convergence constant 
		$\mathop{\lim}_{\varepsilon\rightarrow 0}\inf_{\widehat{Q}\in\mathcal{D}}\sup_{\theta\in\Theta_{\mathrm{Sobolev}}(\alpha,B)}2\varepsilon^{-2/(2\alpha+1)}R(\theta,\widehat{Q})$ 
with $\alpha=1$ and $B=1$:
The red line denotes the convergence constant where $\mathcal{A}$ is the whole set of probability distributions
and the black line denotes the convergence constant where $\mathcal{A}$ is the whole set of plug-in predictive distributions.
}
	\label{ConvergenceConstant_alpha1_gammavar}
\end{figure}

Thus, we obtain the asymptotically minimax risk
\begin{align}
	\mathop{\inf}_{\widehat{Q}\in\mathcal{D}}\mathop{\sup}_{\theta\in\Theta(\alpha,B)}&R(\theta,\widehat{Q})
\nonumber\\
&=\mathop{\sum}_{i=1}^{T}\frac{1}{2}
\log\left(\frac{1+(\tau^{*}_{i}(\varepsilon,\tilde{\varepsilon}))^{2}/v^{2}_{\varepsilon,\tilde{\varepsilon}}}
	{1+(\tau^{*}_{i}(\varepsilon,\tilde{\varepsilon}))^{2}/v^{2}_{\varepsilon}}
\right)
\nonumber\\
&=T\mathop{\sum}_{i=1}^{T}\frac{1}{2}
\log\left(
	1+\frac{1}{\gamma^{2}+\frac{2\gamma^{2}(\gamma^{2}+1)}{\sqrt{1+4\gamma^{2}(\gamma^{2}+1)(i/N)^{-2\alpha}}-(2\gamma^{2}+1)}}
\right)
\frac{1}{T}
\nonumber\\
&=\frac{T}{2}\int_{0}^{1}
\log\left(
	1+\frac{1}{\gamma^{2}+\frac{2\gamma^{2}(\gamma^{2}+1)}{\sqrt{1+4\gamma^{2}(\gamma^{2}+1)x^{-2\alpha}}-(2\gamma^{2}+1)}}
\right)
\mathrm{d}x(1+\mathrm{o}(1))
\nonumber\\
&=\left(\frac{B}{\varepsilon^{2}}\right)^{1/(2\alpha+1)} \mathcal{P}^{*}(1+\mathrm{o}(1)),
\end{align}
where
\begin{eqnarray*}
\mathcal{P}^{*}&=&
\frac{1}{2}
\left[\frac{2(\gamma^{2}+1)}{\int_{0}^{1}x^{2\alpha}\sqrt{1+4\gamma^{2}(\gamma^{2}+1)x^{-2\alpha}}\mathrm{d}x
-\frac{2\gamma^{2}+1}{2\alpha+1}}\right]^{1/(2\alpha+1)}
\nonumber\\
&&\times
\int_{0}^{1}
\log\left(
	1+\frac{1}{\gamma^{2}+\frac{2\gamma^{2}(\gamma^{2}+1)}{\sqrt{1+4\gamma^{2}(\gamma^{2}+1)x^{-2\alpha}}-(2\gamma^{2}+1)}}
\right)
\mathrm{d}x.
\end{eqnarray*}

We compare the Kullback--Leibler risk of the asymptotically minimax predictive distribution with
the Kullback--Leibler risk of the plug-in predictive distribution that is asymptotically minimax among all plug-in predictive distributions.
The latter is obtained using Pinsker's asymptotically minimax theorem for estimation (see \citet{Pinsker(1980)}).
We call the former and the latter risks the predictive and the estimative asymptotically minimax risks, respectively.
The orders of $\varepsilon^{-2}$ and $B$ in the predictive asymptotic minimax risk are both the $1/(2\alpha+1)$-th power.
These orders are the same as in the estimative asymptotically minimax risk.
However, the convergence constant $\mathcal{P}^{*}$ and 
the convergence constant in the estimative asymptotically minimax risk
are different.
Note that the convergence constant in the estimative asymptotically minimax risk 
is the Pinsker constant $(2\alpha+1)^{\frac{1}{2\alpha+1}}\left(\frac{\alpha}{\alpha+1}\right)^{2\alpha(2\alpha+1)}$ multiplied by $1/(2\gamma^{2})$.
Figure \ref{ConvergenceConstant_alpha1_gammavar}
shows
that the convergence constant $\mathcal{P}^{*}$ becomes smaller than
the convergence constant in the estimative asymptotically minimax risk
as $\gamma^{-1}$ increases.
\citet{XuandLiang(2010)} also pointed out this phenomenon when $\gamma=1$.

\subsubsection{The exponential ellipsoid}
The exponential ellipsoid is
$\Theta_{\mathrm{exp}}(\alpha,B)=\{\theta\in l_{2}:\mathop{\sum}_{i=1}^{\infty}\mathrm{e}^{2\alpha i} \theta^{2}_{i}\leq B\}$,
with $\alpha > 0$ and $B>0$.
We set $\tilde{\varepsilon}=\gamma\varepsilon$ for $\gamma>0$.

We expand $T:=T(\varepsilon,\tilde{\varepsilon})$ and $\tau^{*}(\varepsilon,\tilde{\varepsilon})$.
From the definition of $T$,
we have $2\lambda(\varepsilon,\tilde{\varepsilon})=\frac{1}{\mathrm{e}^{2\alpha T}\tilde{\varepsilon}^{2}}(1+\mathrm{o}(1))$.
Thus,
\begin{eqnarray*}
	2B
	&=&N\mathrm{e}^{2\alpha T}\frac{\varepsilon^{2}}{(\gamma^{2}+1)}r(\alpha,\gamma)(1+\mathrm{o}(1)),
\end{eqnarray*}
where $r(\alpha,\gamma)$ is a bounded term with respect to $N$.
Then,
\begin{eqnarray*}
	T(\varepsilon,\tilde{\varepsilon})&=&\frac{1}{\alpha}\log(\varepsilon^{-1})+\mathrm{o}(\log(\varepsilon^{-1}))
\end{eqnarray*}
and
\begin{eqnarray*}
	\left(\tau^{*}_{i}(\varepsilon,\tilde{\varepsilon})\right)^{2}
&=&
	\frac{\varepsilon^{2}}{2(\gamma^{2}+1)}
	\left[\sqrt{1+4\gamma^{2}(\gamma^{2}+1)\mathrm{e}^{-2\alpha(i-N)}}-(2\gamma^{2}+1)\right]_{+}(1+\mathrm{o}(1)).
\end{eqnarray*}
Thus,
we obtain the asymptotically minimax risk
\begin{eqnarray}
	&&\mathop{\inf}_{\widehat{Q}\in\mathcal{D}}\mathop{\sup}_{\theta\in\Theta_{\mathrm{exp}}(\alpha,B)}R(\theta,\widehat{Q})
\nonumber\\
&&=\frac{1}{2}\mathop{\sum}_{i=1}^{T}\log\left(1+\frac{1}{\gamma^{2}+\frac{2\gamma^{2}(\gamma^{2}+1)}{\sqrt{1+4\gamma^{2}(\gamma^{2}+1)\mathrm{e}^{-2\alpha(i-T)}}-(2\gamma^{2}+1)}}\right)(1+\mathrm{o}(1))
\nonumber\\
&&\sim \log(\varepsilon^{-1}).
\label{AsymptoticKL_exp}
\end{eqnarray}

We compare the predictive asymptotically minimax risk with the estimative asymptotically minimax risk
in the exponential ellipsoid.
From (\ref{AsymptoticKL_exp}),
\begin{eqnarray*}
	\mathop{\lim}_{\varepsilon\to 0}\left\{
		\mathop{\inf}_{\widehat{Q}\in\mathcal{D}}
\mathop{\sup}_{\theta\in\Theta_{\mathrm{exp}}(\alpha,B)}
R(\theta,\widehat{Q})
\bigg/\log(\varepsilon^{-1})
	\right\}
\leq\frac{\log(1+1/\gamma^{2})}{2\alpha}.
\end{eqnarray*}
From Pinsker's asymptotically minimax theorem,
\begin{eqnarray*}
	\mathop{\lim}_{\varepsilon\to 0}\left\{
	\mathop{\inf}_{\hat{\theta}}\mathop{\sup}_{\theta\in\Theta_{\mathrm{exp}}(\alpha,B)}
R\left(\theta,Q_{\hat{\theta}}\right)
\bigg/\log(\varepsilon^{-1})
	\right\}
=\frac{1}{2\gamma^{2}\alpha}.
\end{eqnarray*}
Thus, for any $\gamma>0$,
\begin{align*}
	\mathop{\lim}_{\varepsilon\to 0}&\left[
		\mathop{\inf}_{\widehat{Q}\in\mathcal{D}}\mathop{\sup}_{\theta\in\Theta_{\mathrm{exp}}(\alpha,B)}R(\theta,\widehat{Q})\bigg/\log(\varepsilon^{-1})
	\right]
\nonumber\\
&<
\mathop{\lim}_{\varepsilon\to 0}\left[
	\mathop{\inf}_{\hat{\theta}}\mathop{\sup}_{\theta\in\Theta_{\mathrm{exp}}(\alpha,B)}
R(\theta,Q_{\hat{\theta}})\bigg/\log(\varepsilon^{-1})
	\right].
\end{align*}
In an exponential ellipsoid,
the order of $\varepsilon$ in the predictive asymptotically minimax risk
is the same as that in the estimative asymptotically minimax risk.
The convergence constant in the predictive asymptotically minimax risk 
is strictly smaller than that in the estimative asymptotically minimax risk.

\begin{rem}
There are differences between the asymptotically minimax risks in the Sobolev and the exponential ellipsoids.
The constant $B$ has the same order in the asymptotically minimax risk 
as that of $\varepsilon^{-2}$
when the parameter space is the Sobolev ellipsoid.
In contrast,
the constant $B$ disappears
in the asymptotically minimax risk when the parameter space is the exponential ellipsoid.
\end{rem}

\section{Asymptotically minimax adaptive predictive distribution}
\label{Sec:Adaptivity}

In this section,
we show that the blockwise Stein predictive distribution
is asymptotically minimax adaptive
on the family of Sobolev ellipsoids.
Recall that
the Sobolev ellipsoid is
$\Theta_{\mathrm{Sobolev}}(\alpha,B)=\{\theta\in l_{2}: \sum_{i=1}^{\infty}i^{2\alpha}\theta_{i}^{2}\leq B\}$
with $\alpha>0$ and $B>0$.

\subsection{Principal theorem of Section \ref{Sec:Adaptivity}}

For the principal theorem,
we introduce a blockwise Stein predictive distribution and a weakly geometric blocks system.

A blockwise Stein predictive distribution for a set of blocks is constructed as follows.
Let $d$ be any positive integer.
We divide $\{1,\ldots,d\}$ into $J$ blocks:
$\{1,\cdots,d\}=\mathop{\cup}_{j=1}^{J}B_{j}$.
We denote the number of elements in each block $B_{j}$ by $b_{j}$.
Corresponding to the division into the blocks $\mathcal{B}(d):=\{B_{j}\}_{j=1}^{J}$,
we divide $\theta^{(d)}$ into $\theta_{B_{1}}=(\theta_{1},\ldots,\theta_{b_{1}})$,
$\cdots$,
and
$\theta_{B_{J}}=(\theta_{\sum_{j=1}^{J-1}{b_{j}}+1},\ldots,\theta_{d})$.
In the same manner,
we divide $X^{(d)}$ into $X_{B_{1}}$,$\cdots$, and $X_{B_{J}}$.
Let $h^{(d)}_{\mathcal{B}(d)}$ be the blockwise Stein prior
with the set of blocks $\mathcal{B}(d)$
defined by
\begin{eqnarray*}
	h^{(d)}_{\mathcal{B}(d)}(\theta^{(d)}):=\left(\mathop{\prod}_{j\in\{1,\ldots J\}:b_{j}>2}||\theta_{B_{j}}||^{2-b_{j}}\right),
\end{eqnarray*}
where $||\cdot||$ is the square norm.
We define the blockwise Stein predictive distribution with the set of blocks $\mathcal{B}(d)$ as
\begin{align}
	Q_{h^{(d)}_{\mathcal{B}(d)}}(\cdot|X):=&
	\left(
	\int \otimes_{i=1}^{d}\mathcal{N}(\theta_{i},\tilde{\varepsilon}^{2}) 
	h^{(d)}_{\mathcal{B}(d)}(\theta^{(d)}|X^{(d)})\mathrm{d}\theta^{(d)}
	\right)
	\otimes
	\left(
	\mathop{\otimes}_{i=d+1}^{\infty}\mathcal{N}(0,\tilde{\varepsilon}^{2})
	\right),\quad\quad
	\label{def_blockwiseStein}
\end{align}
where $h^{(d)}_{\mathcal{B}(d)}(\theta^{(d)}|X^{(d)})$ 
is the posterior density of $h^{(d)}_{\mathcal{B}(d)}(\theta^{(d)})$.
In regard to estimation,
\citet{BrownandZhao(2009)} discussed the behavior of the Bayes estimator based on the blockwise Stein prior.

The weakly geometric blocks (WGB) system is introduced as follows.
The WGB system
$\mathcal{B}^{*}_{\varepsilon}:=\{B^{*}_{\varepsilon,j}\}_{j=1}^{J(\varepsilon)}$
with cardinalities $\{b^{*}_{\varepsilon,j}\}_{j=1}^{J(\varepsilon)}$
is the division of $\{1,\ldots,d(\varepsilon)\}$, where $d(\varepsilon)=\lfloor 1/\varepsilon^{2}\rfloor$.
It
is defined by
\begin{eqnarray}
	b^{*}_{\varepsilon,1}&=&\lceil \rho_{\varepsilon}^{-1}\rceil,
\nonumber\\
b^{*}_{\varepsilon,2}&=&\lfloor b^{*}_{\varepsilon,1}(1+\rho_{\varepsilon})\rfloor,
\nonumber\\
\cdots
\nonumber\\
b^{*}_{\varepsilon,J(\varepsilon)-1}&=&\lfloor b^{*}_{\varepsilon,1}(1+\rho_{\varepsilon})^{J(\varepsilon)-2}\rfloor,
\nonumber\\
b^{*}_{\varepsilon,J(\varepsilon)}&=&d(\varepsilon)-\mathop{\sum}_{j=1}^{J(\varepsilon)-1}b^{*}_{\varepsilon,j},
\label{def_WGB}
\end{eqnarray}
where 
$\rho_{\varepsilon}=(\log(1/\varepsilon))^{-1}$
and
$J(\varepsilon)=\mathop{\min}\{m: b^{*}_{\varepsilon,1}+\mathop{\sum}_{j=2}^{m}
\lfloor b^{*}_{\varepsilon,1}(1+\rho_{\varepsilon})^{j-1}\rfloor\geq d(\varepsilon)\}$.
The WGB system has been used for the construction of an asymptotically minimax adaptive estimator;
see \citet{CavalierandTsybakov(2001)} and \citet{Tsybakov(2009)}.

\vspace{4mm}

The following is the principal theorem.
Let $d(\varepsilon)$ be $\lfloor 1/\varepsilon^{2} \rfloor$.
Let $\mathcal{B}^{*}_{\varepsilon}$ be the WGB system defined by (\ref{def_WGB})
with cardinalities $\{b^{*}_{\varepsilon,j}\}_{j=1}^{J(\varepsilon)}$.
Let
$Q_{h^{\left(d(\varepsilon)\right)}_{\mathcal{B}^{*}_{\varepsilon}}}$
be
the blockwise Stein predictive distribution 
with the WGB system $\mathcal{B}^{*}_{\varepsilon}$
defined by (\ref{def_blockwiseStein}).

\begin{thm}
\label{SobolevAdaptive}
If $\tilde{\varepsilon}=\gamma\varepsilon$ for some $\gamma>0$,
then
for any $\alpha>0$ and for any $B>0$,
\begin{eqnarray*}
	\lim_{\varepsilon\rightarrow 0}\left[\mathop{\sup}_{\theta\in\Theta_{\mathrm{Sobolev}}(\alpha,B)}
		R\left(\theta,Q_{h^{\left(d(\varepsilon)\right)}_{\mathcal{B}^{*}_{\varepsilon}}}\right)
	\bigg/\mathop{\inf}_{\widehat{Q}\in\mathcal{D}}\mathop{\sup}_{\theta\in\Theta_{\mathrm{Sobolev}}(\alpha,B)}R(\theta,\widehat{Q})\right]=1.
\end{eqnarray*}
\end{thm}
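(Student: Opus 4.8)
The plan is to establish only the upper bound
\[\sup_{\theta\in\Theta_{\mathrm{Sobolev}}(\alpha,B)}R\bigl(\theta,Q_{h^{(d(\varepsilon))}_{\mathcal{B}^{*}_{\varepsilon}}}\bigr)\le(1+\mathrm{o}(1))\inf_{\widehat{Q}\in\mathcal{D}}\sup_{\theta\in\Theta_{\mathrm{Sobolev}}(\alpha,B)}R(\theta,\widehat{Q}),\]
the reverse inequality (the ratio being at least $1$) being automatic from $Q_{h^{(d(\varepsilon))}_{\mathcal{B}^{*}_{\varepsilon}}}\in\mathcal{D}$. First I would split the risk coordinatewise: since $Q_{h^{(d(\varepsilon))}_{\mathcal{B}^{*}_{\varepsilon}}}$ equals $\otimes_{i>d(\varepsilon)}\mathcal{N}(0,\tilde\varepsilon^{2})$ on the tail coordinates and a product over the WGB blocks of the block-Stein predictive distributions on $\{1,\dots,d(\varepsilon)\}$, a direct Kullback--Leibler computation in the spirit of Lemma \ref{GaussianMeasures} gives $R(\theta,Q_{h^{(d(\varepsilon))}_{\mathcal{B}^{*}_{\varepsilon}}})=\sum_{j=1}^{J(\varepsilon)}R_{B^{*}_{\varepsilon,j}}(\theta_{B^{*}_{\varepsilon,j}})+\sum_{i>d(\varepsilon)}\theta_{i}^{2}/(2\tilde\varepsilon^{2})$, where $R_{B}$ denotes the Kullback--Leibler risk contributed by the block $B$. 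For $\theta\in\Theta_{\mathrm{Sobolev}}(\alpha,B)$ the tail is bounded by $B\,d(\varepsilon)^{-2\alpha}/(2\gamma^{2}\varepsilon^{2})=\mathrm{O}(\varepsilon^{4\alpha-2})$; since the minimax risk is of exact order $\varepsilon^{-2/(2\alpha+1)}$ by Theorem \ref{AsymptoticallyMinimaxTheorem} and the Sobolev expansion, and $8\alpha^{2}>0$ forces $\varepsilon^{4\alpha-2}=\mathrm{o}(\varepsilon^{-2/(2\alpha+1)})$ for every $\alpha>0$, the tail is negligible uniformly over the class.

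Second I would apply the oracle inequality of Subsection \ref{Subsec:Oracle} (Lemma \ref{Oracle}) block by block. For $\varepsilon$ small every WGB block has cardinality $b^{*}_{\varepsilon,j}\ge\lceil\rho_{\varepsilon}^{-1}\rceil\to\infty$, so the Stein prior is active on all of them. Lemma \ref{Oracle} bounds $R_{B^{*}_{\varepsilon,j}}(\theta_{B^{*}_{\varepsilon,j}})$ by the best scalar-Gaussian (blockwise-constant-variance) predictive risk on the block, namely
\[\mathcal{R}^{*}_{b}(s):=\min_{t\ge 0}\Bigl\{\tfrac{b}{2}\log\tfrac{1+t/v^{2}_{\varepsilon,\tilde\varepsilon}}{1+t/v^{2}_{\varepsilon}}+\tfrac12\tfrac{b\,v^{2}_{\varepsilon,\tilde\varepsilon}+s}{v^{2}_{\varepsilon,\tilde\varepsilon}+t}-\tfrac12\tfrac{b\,v^{2}_{\varepsilon}+s}{v^{2}_{\varepsilon}+t}\Bigr\}\]
with $b=b^{*}_{\varepsilon,j}$ and $s=\|\theta_{B^{*}_{\varepsilon,j}}\|^{2}$ (this is exactly the risk of $Q_{\mathrm{G}_{\tau}}$ with $\tau$ constant on the block, by Lemma \ref{GaussianMeasures}), plus a remainder $\Delta_{\varepsilon,j}$ uniform in $\theta_{B^{*}_{\varepsilon,j}}$. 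Summing over $j$ and using $J(\varepsilon)=\mathrm{O}((\log(1/\varepsilon))^{2})$ together with $b^{*}_{\varepsilon,j}\le d(\varepsilon)=\mathrm{O}(\varepsilon^{-2})$, the total remainder $\sum_{j}\Delta_{\varepsilon,j}$ is polynomial in $\log(1/\varepsilon)$, hence $\mathrm{o}(\varepsilon^{-2/(2\alpha+1)})$, so $\sup_{\theta}R(\theta,Q_{h^{(d(\varepsilon))}_{\mathcal{B}^{*}_{\varepsilon}}})\le\sup_{\theta}\sum_{j=1}^{J(\varepsilon)}\mathcal{R}^{*}_{b^{*}_{\varepsilon,j}}(\|\theta_{B^{*}_{\varepsilon,j}}\|^{2})+\mathrm{o}(\varepsilon^{-2/(2\alpha+1)})$.

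Third I would identify the blockwise-oracle term with the minimax risk. Relax the supremum to arbitrary $s_{j}:=\|\theta_{B^{*}_{\varepsilon,j}}\|^{2}\ge 0$: on each WGB block the weights $i^{2\alpha}$ vary by a factor $1+\mathrm{O}(\rho_{\varepsilon})$ only, so the Sobolev constraint $\sum_{i}i^{2\alpha}\theta_{i}^{2}\le B$ is sandwiched, up to a factor $1+\mathrm{o}(1)$, between the blockwise-constant constraints $\sum_{j}\underline{a}_{j}^{2\alpha}s_{j}\le B$ and $\sum_{j}\overline{a}_{j}^{2\alpha}s_{j}\le B$, where $\underline{a}_{j}=\min B^{*}_{\varepsilon,j}$ and $\overline{a}_{j}=\max B^{*}_{\varepsilon,j}$. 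Maximizing $\sum_{j}\mathcal{R}^{*}_{b^{*}_{\varepsilon,j}}(s_{j})$ under such a constraint is the finite-dimensional water-filling problem solved in Lemma \ref{LinearMinimax}: the maximizer is a blockwise-constant approximation of the sequence $\{(\tau^{*}_{i}(\varepsilon,\tilde\varepsilon))^{2}\}$ defined by (\ref{optimalvariance}), and its value coincides, by the Riemann-sum computations in the Sobolev subsection, with $(B/\varepsilon^{2})^{1/(2\alpha+1)}\mathcal{P}^{*}(1+\mathrm{o}(1))=\inf_{\widehat{Q}\in\mathcal{D}}\sup_{\theta}R(\theta,\widehat{Q})(1+\mathrm{o}(1))$. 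Chaining the three displays gives the upper bound, and with the trivial lower bound the ratio tends to $1$.

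The hard part will be the oracle inequality of Subsection \ref{Subsec:Oracle} itself: the Stein-prior Bayesian predictive distribution on a block is not Gaussian, so one must expand its predictive density, relate the shrinkage it induces to a James--Stein-type factor, and bound --- uniformly in $\theta_{B}$ and with an explicit, summable error --- the discrepancy from the best scalar-Gaussian predictive risk, the regime $\|\theta_{B}\|^{2}$ near $0$ (where the shrinkage is most aggressive and the posterior normalizing constant is delicate) being the most demanding. By comparison, the remaining ingredients --- the tail bound, the accounting of $\sum_{j}\Delta_{\varepsilon,j}$, and the block-constant approximation of the Sobolev constraint, which is precisely what the weakly geometric growth $b^{*}_{\varepsilon,j}/\sum_{k\le j}b^{*}_{\varepsilon,k}\to 0$ is designed to render negligible --- are routine.
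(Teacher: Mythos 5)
Your overall architecture parallels the paper's for the first two steps: decompose the risk into a tail term plus a sum of block risks, apply the Stein oracle inequality (Lemma \ref{Oracle}) block by block, and absorb the per-block constants into a $J(\varepsilon)\log(v^{2}_{\varepsilon}/v^{2}_{\varepsilon,\tilde{\varepsilon}})=\mathrm{O}(\log^{3}(1/\varepsilon))$ remainder that is negligible against $\varepsilon^{-2/(2\alpha+1)}$. Where you diverge is the third step: the paper compares the blockwise-constant Gaussian class $\mathcal{G}_{\mathrm{BW}}(\mathcal{B}^{*}_{\varepsilon})$ with the monotone class $\mathcal{G}_{\mathrm{mon}}(\mathcal{B}^{*}_{\varepsilon})$ at fixed $\theta$ (Lemma \ref{MimicMonotoneclass}, which mimics any monotone $\tau$ by its block-head values at the cost of a factor $1+3\rho_{\varepsilon}$ plus $\tfrac{b_{1}}{2}\log(v^{2}_{\varepsilon}/v^{2}_{\varepsilon,\tilde{\varepsilon}})$) and then uses that the Pinsker rule $\tau^{*}(\varepsilon,\tilde{\varepsilon})$ is monotone and supported on the first $d(\varepsilon)$ coordinates, so that $\sup_{\theta}\inf_{\mathcal{G}_{\mathrm{mon}}}R\le\sup_{\theta}R(\theta,Q_{\mathrm{G}_{\tau=\tau^{*}}})=(1+\mathrm{o}(1))\inf_{\widehat{Q}}\sup_{\theta}R$. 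You instead relax the Sobolev constraint to a blockwise-weighted constraint and solve the blockwise water-filling problem directly.

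In that step there is a genuine gap. Your key uniformity claim --- that within each WGB block the weights $i^{2\alpha}$ vary only by a factor $1+\mathrm{O}(\rho_{\varepsilon})$ --- is false for the early blocks: block $j$ spans indices whose ratio is about $1+b^{*}_{\varepsilon,j}/\sum_{k<j}b^{*}_{\varepsilon,k}$, which for $j=2$ is about $2$ (indices from roughly $\rho_{\varepsilon}^{-1}$ to $2\rho_{\varepsilon}^{-1}$, a weight ratio near $4^{\alpha}$), and only becomes $1+\mathrm{O}(\rho_{\varepsilon})$ once $j\gtrsim\rho_{\varepsilon}^{-1}$. Consequently your claimed $(1+\mathrm{o}(1))$ sandwich between the Sobolev constraint and the blockwise-constant constraints, and hence the asserted identification of the blockwise water-filling value with $(B/\varepsilon^{2})^{1/(2\alpha+1)}\mathcal{P}^{*}(1+\mathrm{o}(1))$, does not follow as written. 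The route can be repaired: the blocks with poor weight approximation cover only the first $\mathrm{O}(\log^{2}(1/\varepsilon))$ coordinates, each of which can contribute at most $\tfrac{1}{2}\log(1+\gamma^{-2})$ to the blockwise oracle risk, so they can be discarded at an additive cost $\mathrm{o}(\varepsilon^{-2/(2\alpha+1)})$, and the sandwich applied only to the remaining blocks --- but that argument is missing from your proposal and is exactly the content the paper's Lemma \ref{MimicMonotoneclass} supplies in another form. Two smaller points: the last WGB block $b^{*}_{\varepsilon,J(\varepsilon)}$ need not have more than two elements, so ``the Stein prior is active on all blocks'' is not quite right (harmless: on such a block the uniform-prior predictive risk is at most $\log(v^{2}_{\varepsilon}/v^{2}_{\varepsilon,\tilde{\varepsilon}})$); and your closing assessment inverts the difficulty --- the oracle inequality you flag as the hard part is already proved as Lemma \ref{Oracle} in Subsection \ref{Subsec:Oracle}, whereas the block-constant approximation of the constraint, which you call routine, is precisely where your argument currently fails.
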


The proof is provided in Subsection \ref{Subsec:Proof}.
An inequality related to the Bayesian predictive distribution based on Stein's prior 
that we will use in the proof of Theorem \ref{SobolevAdaptive}
will be shown in Subsection \ref{Subsec:Oracle}.
In Subsection \ref{Subsec:Proof},
we introduce several lemmas and provide the proof of Theorem \ref{SobolevAdaptive}.

\subsection{Oracle inequality of the Bayesian predictive distribution based on Stein's prior}\label{Subsec:Oracle}
Before considering the proof of Theorem \ref{SobolevAdaptive},
we show an oracle inequality related to Stein's prior for $d>2$
that is useful outside of the proof of Theorem \ref{SobolevAdaptive}.
Recall that the $d$-dimensional Kullback--Leibler risk
$R_{d}(\theta^{(d)},\widehat{Q}^{(d)})$
of the predictive distribution $\widehat{Q}^{(d)}$ on $(\mathbb{R}^{d},\mathcal{R}^{d})$
is defined by
\begin{align*}
	R_{d}(\theta^{(d)},\widehat{Q}^{(d)})=
\int \int
\log\frac{\mathrm{d}Q^{(d)}_{\theta^{(d)}}}{\mathrm{d}\widehat{Q}^{(d)}(\cdot;X^{(d)}=x^{(d)})}(y^{(d)})
\mathrm{d}Q^{(d)}_{\theta^{(d)}}(x^{(d)})\mathrm{d}P^{(d)}_{\theta^{(d)}}(y^{(d)})
\end{align*}
where
$P^{(d)}_{\theta^{(d)}}$ and $Q^{(d)}_{\theta^{(d)}}$ are 
$\otimes_{i=1}^{d}\mathcal{N}(\theta_{i},\varepsilon^{2})$ and $\otimes_{i=1}^{d}\mathcal{N}(\theta_{i},\tilde{\varepsilon}^{2})$,
respectively.
For a positive integer $d>2$,
let $Q^{(d)}_{h^{(d)}}$ be the Bayesian predictive distribution on $\mathbb{R}^{d}$
based on the $d$-dimensional Stein's prior $h^{(d)}(\theta^{(d)})=||\theta^{(d)}||^{2-d}$.
\begin{lem}
\label{Oracle}
Let $d$ be any positive integer such that $d>2$.
For any $\theta^{(d)}\in \mathbb{R}^{d}$,
\begin{eqnarray}
	R_{d}(\theta^{(d)},Q^{(d)}_{h^{(d)}})
&\leq&
	\log\left(\frac{v^{2}_{\varepsilon}}{v^{2}_{\varepsilon,\tilde{\varepsilon}}}\right)
+\frac{d}{2}\log\left(
	\frac{1+ (||\theta^{(d)}||^{2}/d)/v^{2}_{\varepsilon,\tilde{\varepsilon}}}
	{1+(||\theta^{(d)}||^{2}/d)/v^{2}_{\varepsilon}}
\right).\quad\quad
\label{oracleineq}
\end{eqnarray}
\end{lem}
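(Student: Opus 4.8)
The plan is to reduce the $d$-dimensional Kullback--Leibler risk of $Q^{(d)}_{h^{(d)}}$ to a one-dimensional quantity depending only on $r:=\|\theta^{(d)}\|$, and then to bound that quantity using the known behaviour of the Bayesian predictive distribution based on Stein's prior in the radial direction. First I would recall the general identity that, for prediction in the Gaussian location model with current variance $\varepsilon^2$ and future variance $\tilde\varepsilon^2$, the Kullback--Leibler risk of \emph{any} Bayesian predictive distribution $Q_\Pi$ admits the representation in terms of the heat-equation / marginal-density comparison: writing $p_s(\cdot;\theta)$ for the density of $\mathcal N(\theta, s I_d)$, one has
\begin{align*}
R_d(\theta^{(d)},Q_\Pi)
= \int \log\frac{p_{\tilde\varepsilon^2}(y;\theta^{(d)})}{m_{\Pi,\tilde\varepsilon^2}(y)}\,p_{\tilde\varepsilon^2}(y;\theta^{(d)})\,\mathrm{d}y
- \mathbb{E}_{\theta^{(d)}}\!\left[\log\frac{p_{\varepsilon^2}(X;\theta^{(d)})}{m_{\Pi,\varepsilon^2}(X)}\right],
\end{align*}
where $m_{\Pi,s}$ is the marginal under prior $\Pi$ with noise level $s$; this is the standard Aitchison-type decomposition and I would cite Lemma~\ref{GaussianMeasures} / the references therein (Zhao, George--Liang--Xu) for its validity. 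Equivalently $R_d(\theta^{(d)},Q_\Pi)$ equals the difference of two Bayes risks against $\delta_{\theta^{(d)}}$ at noise levels $\tilde\varepsilon^2$ and $\varepsilon^2$.

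Next I would exploit the spherical symmetry of Stein's prior $h^{(d)}(\theta)=\|\theta\|^{2-d}$: the marginal $m_{h^{(d)},s}(y)$ depends on $y$ only through $\|y\|$, so the whole risk $R_d(\theta^{(d)},Q^{(d)}_{h^{(d)}})$ depends on $\theta^{(d)}$ only through $r=\|\theta^{(d)}\|$. The comparison target on the right-hand side of \eqref{oracleineq} is exactly the risk of the Gaussian-prior predictive distribution $Q_{\mathrm G_\tau}$ with the \emph{isotropic} choice $\tau_i^2 \equiv r^2/d$ and $\theta_i^2$ replaced by its average $r^2/d$, as one checks from the closed form \eqref{KL_Bayes_productNormal}: with $\tau_i^2=\theta_i^2=r^2/d$ the last two summands in \eqref{KL_Bayes_productNormal} collapse, each $i$ contributing $\tfrac12$, for a total of $\tfrac d2\cdot 1 - \tfrac d2\cdot 1 = 0$ from those terms (more precisely they contribute $\tfrac d2\frac{v^2_{\varepsilon,\tilde\varepsilon}+r^2/d}{v^2_{\varepsilon,\tilde\varepsilon}+r^2/d}-\tfrac d2\frac{v^2_\varepsilon+r^2/d}{v^2_\varepsilon+r^2/d}=0$), leaving precisely $\tfrac d2\log\!\frac{1+(r^2/d)/v^2_{\varepsilon,\tilde\varepsilon}}{1+(r^2/d)/v^2_\varepsilon}$; the extra $\log(v^2_\varepsilon/v^2_{\varepsilon,\tilde\varepsilon})$ slack on the right is what we must absorb the Stein-versus-Gauss discrepancy into. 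So the inequality to prove is
\begin{align*}
R_d(r, Q^{(d)}_{h^{(d)}}) - R_d(r, Q_{\mathrm G_{\tau^2\equiv r^2/d}})\ \leq\ \log\!\left(\frac{v^2_\varepsilon}{v^2_{\varepsilon,\tilde\varepsilon}}\right).
\end{align*}

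The main work — and the main obstacle — is this last step: controlling, uniformly in $r\ge 0$, the gap between the Stein predictive risk and the isotropic-Gauss predictive risk. I would handle it by writing each of the two terms in the risk decomposition as a Bayes-risk difference and using the super-harmonicity of $h^{(d)}$ for $d>2$ (the property underlying Komaki's and George--Liang--Xu's minimax/dominance results): the key lemma is that for Stein's prior the \emph{predictive} excess risk over the best equivariant estimate, at any single noise level $s$, is bounded by a constant not exceeding $\tfrac12\log(\text{ratio of noise levels})$-type quantity — concretely, that $\int\log\{p_s(y;\theta)/m_{h^{(d)},s}(y)\}p_s(y;\theta)\mathrm dy \le \tfrac d2\log(1+ \frac{\|\theta\|^2/d}{s}) + (\text{bounded by }\tfrac12\log 2\text{ or similar})$ and a matching lower bound on the $\varepsilon^2$-level term. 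I would prove these two one-sided bounds by the standard device of representing the marginal $m_{h^{(d)},s}$ as an integral of Gaussian marginals against the radial mixing measure induced by $h^{(d)}$, comparing with the single isotropic Gaussian via Jensen / a monotonicity argument in $r$, and then noting that the $s$-dependence of the bound is through $\log s$ so that taking the difference of the $\tilde\varepsilon^2$- and $\varepsilon^2$-level bounds produces exactly $\log(v^2_\varepsilon/v^2_{\varepsilon,\tilde\varepsilon})=\log(\varepsilon^2(1/\varepsilon^2+1/\tilde\varepsilon^2))$ plus terms that cancel. The delicate point is getting the constants on the two sides to line up so the net slack is \emph{exactly} $\log(v^2_\varepsilon/v^2_{\varepsilon,\tilde\varepsilon})$ rather than that plus an uncontrolled constant; I expect this to require a careful treatment of the small-$r$ regime (where Stein's prior is singular) and the large-$r$ regime (where $m_{h^{(d)},s}$ is close to the flat-prior marginal) separately, and possibly a direct computation using the known formula for $m_{h^{(d)},s}$ in terms of confluent hypergeometric functions or the incomplete-gamma-type expression for the radial marginal.
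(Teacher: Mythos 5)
Your reduction of the right-hand side to ``oracle Gaussian term plus $\log(v^{2}_{\varepsilon}/v^{2}_{\varepsilon,\tilde{\varepsilon}})$'' is a correct reading of the statement, but the proof stops exactly where the work begins: the inequality $R_{d}(r,Q^{(d)}_{h^{(d)}})-R_{d}(r,Q_{\mathrm{G}_{\tau^{2}\equiv r^{2}/d}})\leq\log(v^{2}_{\varepsilon}/v^{2}_{\varepsilon,\tilde{\varepsilon}})$, uniformly in $r\geq 0$, is only announced. The ``key lemma'' you invoke (a bound on the predictive excess risk of the Stein marginal at a single noise level, with constants that cancel when the two noise levels are differenced) is not proved, and you yourself flag that matching the constants, handling the singularity at small $r$, and the large-$r$ regime would need separate, possibly hypergeometric, computations. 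As written this is a plan with the central estimate missing, so it does not establish the lemma.

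The paper avoids all of this by a different mechanism, which is the idea your proposal lacks: the difference of Kullback--Leibler risks of two Bayesian predictive distributions is represented as an integral over the noise level $v\in[v^{2}_{\varepsilon,\tilde{\varepsilon}},v^{2}_{\varepsilon}]$ of the difference of \emph{quadratic} estimation risks of the corresponding Bayes estimators (the identity of Brown, George and Xu). Taking the comparison distribution to be the uniform-prior predictive distribution, whose risk is exactly $\tfrac{d}{2}\log(v^{2}_{\varepsilon}/v^{2}_{\varepsilon,\tilde{\varepsilon}})$, one only needs an upper bound on the quadratic risk of the Stein-prior Bayes estimator at each noise level; this is supplied by Kubokawa's domination of the James--Stein estimator together with the classical bound $\mathrm{E}_{v}\|\theta^{(d)}-\hat{\theta}^{(d)}_{\mathrm{JS}}\|^{2}\leq 2v+dv\|\theta^{(d)}\|^{2}/(dv+\|\theta^{(d)}\|^{2})$. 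Integrating, the $2v$ term produces precisely the slack $\log(v^{2}_{\varepsilon}/v^{2}_{\varepsilon,\tilde{\varepsilon}})$ and the second term produces the oracle term via $\tfrac{d}{v^{2}}\cdot\tfrac{v\|\theta^{(d)}\|^{2}}{dv+\|\theta^{(d)}\|^{2}}=d\bigl(\tfrac{1}{v}-\tfrac{1}{v+\|\theta^{(d)}\|^{2}/d}\bigr)$; no direct analysis of the Stein marginal, no case split in $r$, and no delicate constant matching is needed. To complete your argument you would either have to prove your one-level marginal bounds (which is essentially harder than the lemma itself) or switch to this risk-difference integral representation.
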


\begin{rem}
We call inequality (\ref{oracleineq}) an oracle inequality of Stein's prior for the following reason.
By the same calculation in (\ref{Min_tau}) in the proof of Lemma \ref{LinearMinimax},
the second term on the right hand side of inequality (\ref{oracleineq})
is 
the oracle Kullback--Leibler risk,
that is,
the minimum of the Kullback--Leibler risk
in the case that the action space is
\begin{align*}
\left\{\otimes_{i=1}^{d}\mathcal{N}
	\left(\frac{(1/\varepsilon^{2})}{(1/\varepsilon^{2}+1/\tau^{2})}X_{i},1/(1/\varepsilon^{2}+1/\tau^{2})+\tilde{\varepsilon}^{2}\right):\tau^{2}\in[0,\infty)
	\right\},
\end{align*}
and 
in the case that we are permitted to use the value of the true parameter $\theta^{(d)}$.
Therefore,
Lemma \ref{Oracle} tells us that
the Kullback--Leibler risk of
the $d$-dimensional Bayesian predictive distribution based on Stein's prior
is bounded above by a constant independent of $d$
plus the oracle Kullback--Leibler risk.
\end{rem}

\begin{proof}[Proof of Lemma \ref{Oracle}]
First,
\begin{align*}
	R_{d}&(\theta^{(d)},Q^{(d)}_{u^{(d)}})-R_{d}(\theta^{(d)},Q^{(d)}_{h^{(d)}})
\nonumber\\
&=\frac{1}{2}\int^{v^{2}_{\varepsilon}}_{v^{2}_{\varepsilon,\tilde{\varepsilon}}}
\frac{1}{v^{2}}
\left\{\mathrm{E}_{v}||\theta^{(d)}-\hat{\theta}^{(d)}_{u^{(d)}}(X^{(d)})||^{2}
-\mathrm{E}_{v}||\theta^{(d)}-\hat{\theta}^{(d)}_{h^{(d)}}(X^{(d)})||^{2}
\right\}\mathrm{d}v,
\end{align*}
where
$Q^{(d)}_{u^{(d)}}$
is the Bayesian predictive distribution based on the uniform prior $u^{(d)}(\theta^{(d)}):=1$,
and
$\hat{\theta}^{(d)}_{u^{(d)}}$ and $\hat{\theta}^{(d)}_{h^{(d)}}$ 
are the Bayes estimators based on the uniform prior $u^{(d)}$ 
and based on Stein's prior $h^{(d)}$,
respectively.
Here
$\mathrm{E}_{v}$ is the expectation of $X^{(d)}$ with respect to the $d$-dimensional Gaussian distribution
with mean $\theta^{(d)}$ and covariance matrix $vI_{d}$.
For the proof of the identity,
see \citet{BrownGeorgeXu(2008)}.

Second,
\begin{eqnarray*}
	\mathrm{E}_{v}||\theta^{(d)}-\hat{\theta}^{(d)}_{h^{(d)}}(X^{(d)})||^{2}
\leq
\mathrm{E}_{v}||\theta^{(d)}-\hat{\theta}^{(d)}_{\mathrm{JS}}(X^{(d)})||^{2}
\leq
2v+\frac{dv||\theta^{(d)}||^2}{dv+||\theta^{(d)}||^{2}},
\end{eqnarray*}
where $\hat{\theta}^{(d)}_{\mathrm{JS}}$ is the James--Stein estimator.
For the first inequality,
see \citet{Kubokawa(1991)}.
For the second inequality,
see e.g.~Theorem 7.42 in \citet{Wasserman(2007)}.
Thus,
we have
\begin{align*}
	R_{d}(\theta^{(d)},&Q^{d)}_{u^{(d)}})-R_{d}(\theta^{(d)},Q^{(d)}_{h^{(d)}})
\nonumber\\
&\geq
\frac{1}{2}\int^{v^{2}_{\varepsilon}}_{v^{2}_{\varepsilon,\tilde{\varepsilon}}}\frac{d}{v}\mathrm{d}v
-\frac{1}{2}\int^{v^{2}_{\varepsilon}}_{v^{2}_{\varepsilon,\tilde{\varepsilon}}}
\frac{1}{v^{2}}
\left\{2v+\frac{dv||\theta^{(d)}||^{2}}{dv+||\theta^{(d)}||^{2}}\right\}\mathrm{d}v
\nonumber\\
&=
\frac{d}{2}\log\left(\frac{v^{2}_{\varepsilon}}{v^{2}_{\varepsilon,\tilde{\varepsilon}}}\right)
-\frac{1}{2}\int^{v^{2}_{\varepsilon}}_{v^{2}_{\varepsilon,\tilde{\varepsilon}}}
\frac{1}{v^{2}}
\left\{2v+\frac{dv||\theta^{(d)}||^{2}}{dv+||\theta^{(d)}||^{2}}\right\}\mathrm{d}v.
\end{align*}

Since
\begin{align*}
	R_{d}(\theta^{(d)},Q^{(d)}_{u^{(d)}})=\frac{d}{2}\log\left(\frac{v^{2}_{\varepsilon}}{v^{2}_{\varepsilon,\tilde{\varepsilon}}}\right),
\end{align*}
it follows that
\begin{align*}
	R_{d}(\theta^{(d)},Q^{(d)}_{h^{(d)}})
&\leq\frac{1}{2}\int^{v^{2}_{\varepsilon}}_{v^{2}_{\varepsilon,\tilde{\varepsilon}}}
\frac{1}{v^{2}}
\left\{2v+\frac{dv||\theta^{(d)}||^{2}}{dv+||\theta^{(d)}||^{2}}\right\}\mathrm{d}v
\nonumber\\
&=\log\left(\frac{v^{2}_{\varepsilon}}{v^{2}_{\varepsilon,\tilde{\varepsilon}}}\right)
+\frac{d}{2}\log\left(
	\frac{1+ (||\theta^{(d)}||^{2}/d)/v^{2}_{\varepsilon,\tilde{\varepsilon}}}
	{1+(||\theta^{(d)}||^{2}/d)/v^{2}_{\varepsilon}}
\right).
\end{align*}
Here, we use
\begin{eqnarray*}
	\frac{d}{v^{2}}\frac{v||\theta^{(d)}||^{2}}{dv+||\theta^{(d)}||^{2}}
	&=&
	d\left(\frac{1}{v}-\frac{1}{v+||\theta^{(d)}||^{2}/d}\right).
\end{eqnarray*}
\end{proof}

\vspace{4mm}

\begin{rem}
As a corollary of Lemma \ref{Oracle},
we show that the Bayesian predictive distribution based on Stein's prior is
asymptotically minimax adaptive
in the family of $\mathcal{L}_{2}$-balls
$\{\Theta_{\mathcal{L}_{2}}(d,B):B>0\}$.
The $\mathcal{L}_{2}$-ball is
$\Theta_{\mathcal{L}_{2}}(d,B)=\{\theta\in l_{2}:\sum_{i=1}^{d}\theta^{2}_{i}\leq B, \theta_{d+1}=\theta_{d+2}=\ldots=0 \}$.
Note that another type of an asymptotically minimax adaptive predictive distribution in the family of $\mathcal{L}_{2}$-balls has been investigated by \citet{XuandZhou(2011)}.
\end{rem}

\begin{lem}
\label{L2adaptive}
Let $d(\varepsilon)$ be $\lfloor1/\varepsilon^{2}\rfloor$.
If $\tilde{\varepsilon}=\gamma \varepsilon$ for some $\gamma>0$,
then
for any $B>0$,
the blockwise Stein predictive distribution
$Q_{h^{(d(\varepsilon))}_{\mathcal{S}_{\varepsilon}}}$
with the single block $\mathcal{S}_{\varepsilon}=\{\{1,\ldots,d(\varepsilon)\}\}$
on $\mathbb{R}^{\infty}$
satisfies
\begin{eqnarray*}
\lim_{\varepsilon\to\infty}
&\left[
	\mathop{\sup}_{\theta\in\Theta_{\mathcal{L}_{2}}(d(\varepsilon),B)}
	R\left(\theta,Q_{h^{(d(\varepsilon))}_{\mathcal{S}_{\varepsilon}}}\right)
\bigg{/}
\mathop{\inf}_{\widehat{Q}\in \mathcal{D}}
\mathop{\sup}_{\theta\in\Theta_{\mathcal{L}_{2}}(d(\varepsilon),B)}
R(\theta,\widehat{Q})
\right]=1.
\end{eqnarray*}
\end{lem}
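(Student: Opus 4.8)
The plan is to reduce the statement to a finite-dimensional comparison and then bracket both sides by $d(\varepsilon)\kappa\,(1+\mathrm{o}(1))$, where $\kappa:=\frac{1}{2}\log\frac{1+(1+\gamma^{-2})B}{1+B}>0$. Since $Q_{h^{(d(\varepsilon))}_{\mathcal{S}_{\varepsilon}}}\in\mathcal{D}$, the ratio in the statement is always $\geq 1$, so it suffices to produce matching asymptotic bounds. Write $d:=d(\varepsilon)=\lfloor 1/\varepsilon^{2}\rfloor$ and $\Theta^{(d)}_{\mathcal{L}_{2}}(B):=\{\theta^{(d)}\in\mathbb{R}^{d}:\|\theta^{(d)}\|^{2}\leq B\}$. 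For $\theta\in\Theta_{\mathcal{L}_{2}}(d,B)$ the coordinates $i>d$ of $\theta$ vanish, so $Q_{\theta}$ and $Q_{h^{(d)}_{\mathcal{S}_{\varepsilon}}}(\cdot\mid X)$ agree on those coordinates (both are $\otimes_{i>d}\mathcal{N}(0,\tilde{\varepsilon}^{2})$), while on the first $d$ coordinates $Q_{h^{(d)}_{\mathcal{S}_{\varepsilon}}}(\cdot\mid X)$ is exactly the $d$-dimensional Stein Bayes predictive $Q^{(d)}_{h^{(d)}}(\cdot\mid X^{(d)})$ with $h^{(d)}(\theta^{(d)})=\|\theta^{(d)}\|^{2-d}$; hence $R(\theta,Q_{h^{(d)}_{\mathcal{S}_{\varepsilon}}})=R_{d}(\theta^{(d)},Q^{(d)}_{h^{(d)}})$. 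Conversely, by the argument of Lemma \ref{Lowerboundbysubproblem} (marginalise any $\widehat{Q}\in\mathcal{D}$ to the first $d$ coordinates, then average out the $\theta$-free tail of $X$ using convexity of the Kullback--Leibler divergence) one gets $\inf_{\widehat{Q}\in\mathcal{D}}\sup_{\theta\in\Theta_{\mathcal{L}_{2}}(d,B)}R(\theta,\widehat{Q})\geq R_{d}(\Theta^{(d)}_{\mathcal{L}_{2}}(B))$. So the claim reduces to showing that $\sup_{\|\theta^{(d)}\|^{2}\leq B}R_{d}(\theta^{(d)},Q^{(d)}_{h^{(d)}})$ and $R_{d}(\Theta^{(d)}_{\mathcal{L}_{2}}(B))$ are both $d\kappa\,(1+\mathrm{o}(1))$ along $d=d(\varepsilon)$, $\tilde{\varepsilon}=\gamma\varepsilon$.

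For the upper bound I would apply Lemma \ref{Oracle} with this $d$ (legitimate since $d\to\infty>2$). Using $v^{2}_{\varepsilon}/v^{2}_{\varepsilon,\tilde{\varepsilon}}=1+\gamma^{-2}$ and the monotonicity of $t\mapsto(1+t/v^{2}_{\varepsilon,\tilde{\varepsilon}})/(1+t/v^{2}_{\varepsilon})$ (valid because $v^{2}_{\varepsilon,\tilde{\varepsilon}}<v^{2}_{\varepsilon}$), inequality (\ref{oracleineq}) gives $R_{d}(\theta^{(d)},Q^{(d)}_{h^{(d)}})\leq\log(1+\gamma^{-2})+\frac{d}{2}\log\frac{1+(B/d)/v^{2}_{\varepsilon,\tilde{\varepsilon}}}{1+(B/d)/v^{2}_{\varepsilon}}$ for every $\theta^{(d)}$ with $\|\theta^{(d)}\|^{2}\leq B$. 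Because $d\varepsilon^{2}\to1$ we have $(B/d)/v^{2}_{\varepsilon,\tilde{\varepsilon}}\to(1+\gamma^{-2})B$ and $(B/d)/v^{2}_{\varepsilon}\to B$, and $\log(1+\gamma^{-2})=\mathrm{o}(d)$, so the right-hand side equals $d\kappa\,(1+\mathrm{o}(1))$, uniformly in $\theta^{(d)}$.

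For the lower bound I would compare $R_{d}(\Theta^{(d)}_{\mathcal{L}_{2}}(B))$ with the Bayes risk of a Gaussian prior nearly supported on the ball. Fix $\delta\in(0,1)$, set $s^{2}:=(1-\delta)B/d$, $G:=\otimes_{i=1}^{d}\mathcal{N}(0,s^{2})$, $A:=\Theta^{(d)}_{\mathcal{L}_{2}}(B)$. Averaging the $d$-dimensional form of identity (\ref{KL_Bayes_productNormal}) over $\theta\sim G$ annihilates the two non-logarithmic terms (each becomes $1/2$), so the Bayes risk of $G$ is $r(G)=\frac{d}{2}\log\frac{1+s^{2}/v^{2}_{\varepsilon,\tilde{\varepsilon}}}{1+s^{2}/v^{2}_{\varepsilon}}=\frac{d}{2}\log\frac{1+(1+\gamma^{-2})(1-\delta)B}{1+(1-\delta)B}+\mathrm{o}(d)$. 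Let $G_{A}$ be $G$ conditioned on $A$ and $Q^{(d)}_{G_{A}}$ the associated Bayes predictive. Then $G(A^{\mathrm{c}})=\mathbb{P}(\chi^{2}_{d}>d/(1-\delta))\leq\mathrm{e}^{-c_{\delta}d}$ for some $c_{\delta}>0$; moreover $\mathbb{E}_{G}\|\theta^{(d)}\|^{4}=\mathrm{O}(1)$ since $ds^{2}=(1-\delta)B$, and $R_{d}(\theta^{(d)},Q^{(d)}_{G_{A}})\leq(\|\theta^{(d)}\|^{2}+B)/\tilde{\varepsilon}^{2}$ pointwise (the Kullback--Leibler divergence to a mixture is at most the average over the mixing measure, which is supported on $A$). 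Hence $\int_{A^{\mathrm{c}}}R_{d}(\theta^{(d)},Q^{(d)}_{G_{A}})\,\mathrm{d}G\leq\tilde{\varepsilon}^{-2}\big(\mathbb{E}_{G}[\|\theta^{(d)}\|^{2}\mathbf{1}_{A^{\mathrm{c}}}]+B\,G(A^{\mathrm{c}})\big)=\mathrm{O}(d\,\mathrm{e}^{-c_{\delta}d/2})=\mathrm{o}(1)$. Splitting $\int R_{d}(\cdot,Q^{(d)}_{G_{A}})\,\mathrm{d}G$ over $A$ and $A^{\mathrm{c}}$ and using that the Bayes predictive $Q^{(d)}_{G}$ minimises $\widehat{Q}\mapsto\int R_{d}(\cdot,\widehat{Q})\,\mathrm{d}G$ yields $r(G)\leq G(A)\,r(G_{A})+\mathrm{o}(1)$, so $r(G_{A})\geq r(G)-\mathrm{o}(1)$; since $R_{d}(\Theta^{(d)}_{\mathcal{L}_{2}}(B))\geq r(G_{A})$, letting $\varepsilon\to0$ and then $\delta\downarrow0$ gives $\liminf_{\varepsilon\to0}R_{d(\varepsilon)}(\Theta^{(d(\varepsilon))}_{\mathcal{L}_{2}}(B))/d(\varepsilon)\geq\kappa$.

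Combining the two bounds, $1\leq\sup_{\theta}R(\theta,Q_{h^{(d(\varepsilon))}_{\mathcal{S}_{\varepsilon}}})/\inf_{\widehat{Q}\in\mathcal{D}}\sup_{\theta}R(\theta,\widehat{Q})\leq(1+\mathrm{o}(1))/(1-\mathrm{o}(1))\to 1$, which is the assertion. The reduction to $d(\varepsilon)$ dimensions and the upper bound are routine consequences of Lemmas \ref{Lowerboundbysubproblem} and \ref{Oracle}; the main obstacle is the lower bound---specifically, showing that the small mass $G$ places outside the ball $A$ does not deflate the Bayes risk, which is handled by the $\chi^{2}_{d}$ tail bound together with the crude pointwise estimate $R_{d}(\theta^{(d)},Q^{(d)}_{G_{A}})\lesssim(\|\theta^{(d)}\|^{2}+B)/\tilde{\varepsilon}^{2}$.
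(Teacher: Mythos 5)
Your proposal is correct, and its upper-bound half is exactly the paper's: apply Lemma \ref{Oracle} with $d=d(\varepsilon)$, use monotonicity to put the supremum at $\|\theta^{(d)}\|^{2}=B$, and note that $\log(v^{2}_{\varepsilon}/v^{2}_{\varepsilon,\tilde{\varepsilon}})=\log(1+\gamma^{-2})$ is $\mathrm{o}(d)$, giving $d\kappa(1+\mathrm{o}(1))$ with $\kappa=\tfrac{1}{2}\log\frac{1+(1+\gamma^{-2})B}{1+B}=\tfrac{1}{2}\log\bigl(1+\tfrac{B}{\gamma^{2}(B+1)}\bigr)$, which is the paper's constant. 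Where you diverge is the denominator: the paper simply quotes Theorem 4.2 of \citet{XuandLiang(2010)} (Lemma \ref{MinimaxLowerboundinsieve} in the $\mathcal{L}_{2}$-ball case) to get $\lim (1/d(\varepsilon))\inf_{\widehat{Q}}\sup_{\theta}R=\kappa$, whereas you re-derive the lower bound from scratch: reduce the infinite-dimensional decision problem to $d(\varepsilon)$ dimensions by the marginalization/Jensen argument of Lemma \ref{Lowerboundbysubproblem} (which indeed goes through verbatim for the ball, whose tail coordinates already vanish), then bound the $d$-dimensional minimax risk below by the Bayes risk of $\otimes_{i=1}^{d}\mathcal{N}(0,(1-\delta)B/d)$ conditioned to the ball, computing the unconditioned Bayes risk exactly from the $d$-dimensional analogue of (\ref{KL_Bayes_productNormal}) and controlling the conditioning with a $\chi^{2}_{d}$ tail bound plus the convexity bound $R_{d}(\theta^{(d)},Q^{(d)}_{G_{A}})\leq(\|\theta^{(d)}\|^{2}+B)/\tilde{\varepsilon}^{2}$; each of these steps checks out. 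The paper's route is two lines at the cost of leaning on an external finite-dimensional theorem (and leaving the reduction from $\mathcal{D}$ to $\mathcal{D}^{(d)}$ implicit); your route is longer but self-contained, makes that reduction explicit, and in effect reproves the relevant case of Xu--Liang's asymptotic minimax evaluation by the classical Pinsker-type conditioned-Gaussian-prior argument.
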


\begin{proof}
From Lemma \ref{Oracle},
\begin{align*}
&\lim_{\varepsilon\to 0}
	\left[
		\frac{
	\mathop{\sup}_{\theta\in\Theta_{\mathcal{L}_{2}}(d(\varepsilon),B)}
	R\left(\theta,Q_{h^{(d(\varepsilon))}_{\mathcal{S}_{\varepsilon}}}
\right)}{d(\varepsilon)}
\right]
\nonumber\\
&\leq
\lim_{\varepsilon\to 0} \left[\frac{1}{d(\varepsilon)}
\log \left(\frac{v^{2}_{\varepsilon}}{v^{2}_{\varepsilon,\tilde{\varepsilon}}}\right)\right]
+\frac{1}{2}
\lim_{\varepsilon\to 0}
\left[
\mathop{\sup}_{\theta\in \Theta_{\mathcal{L}_{2}}(d(\varepsilon),B)}
\log\left(
\frac{1+ (||\theta^{(d)}||^{2}/d)/v^{2}_{\varepsilon,\tilde{\varepsilon}}}
	{1+(||\theta^{(d)}||^{2}/d)/v^{2}_{\varepsilon}}
\right)
\right].
\end{align*}
Note that
\begin{eqnarray*}
	\mathop{\sup}_{\theta\in \Theta_{\mathcal{L}_{2}}(d(\varepsilon),B)}
	\log\left(
	\frac{1+ (||\theta^{(d)}||^{2}/d)/v^{2}_{\varepsilon,\tilde{\varepsilon}}}
	{1+(||\theta^{(d)}||^{2}/d)/v^{2}_{\varepsilon}}
\right)
&=&
	\log \left(
	\frac{1+ (B/d)/v^{2}_{\varepsilon,\tilde{\varepsilon}}}
	{1+(B/d)/v^{2}_{\varepsilon}}
\right).
\end{eqnarray*}
Thus, we have
\begin{eqnarray*}
\lim_{\varepsilon\to 0}
\left[
	\frac{1}{d(\varepsilon)}
\mathop{\sup}_{\theta\in\Theta_{\mathcal{L}_{2}}(d(\varepsilon),B)}
R\left(\theta,Q_{h^{(d(\varepsilon))}_{\mathcal{S}_{\varepsilon}}}
\right)
\right]
&=&\frac{1}{2}\log\left(1+\frac{B}{\gamma^{2}(B+1)}\right).
\end{eqnarray*}

Since from Theorem 4.2 in \citet{XuandLiang(2010)} we have
\begin{eqnarray*}
\lim_{\varepsilon\to 0}
\left[
	\frac{1}{d(\varepsilon)}
\mathop{\inf}_{\widehat{Q}\in \mathcal{D}}
\mathop{\sup}_{\theta\in\Theta_{\mathcal{L}_{2}}(d(\varepsilon),B)}
R(\theta,\widehat{Q})
\right]
&=&\frac{1}{2}\log\left(1+\frac{B}{\gamma^{2}(B+1)}\right),
\end{eqnarray*}
the proof is complete.
\end{proof}

\subsection{Proof of the principal theorem of Section \ref{Sec:Adaptivity}}
\label{Subsec:Proof}

In this subsection,
we provide the proof of Theorem \ref{SobolevAdaptive}.
The proof consists of the following two steps.
First,
in Lemma \ref{Oracle_blockwiseStein},
we examine the properties of the blockwise Stein predictive distribution with a set of blocks.
The proof of Lemma \ref{Oracle_blockwiseStein}
requires Lemma \ref{MimicMonotoneclass}.
Second,
we show that
the blockwise Stein predictive distribution with the weakly geometric blocks system 
is asymptotically minimax adaptive on the family of Sobolev ellipsoids,
using Lemma \ref{Oracle_blockwiseStein} and the property of the WGB system (Lemma \ref{WGBproperty}).

For the proof,
we introduce two subspaces of the decision space.
For a given set of blocks $\mathcal{B}(d)=\{B_{j}\}_{j=1}^{J}$,
let $\mathcal{G}_{\mathrm{BW}}(\mathcal{B}(d))$ be
\begin{eqnarray*}
	\{X\to Q_{\mathrm{G}_{\tau}}(\cdot|X):\text{$\tau$ is equal to $(\tau^{(d)},0,0,\ldots)$ with } \tau^{(d)}\in \mathcal{T}_{\mathrm{BW}}\},
\end{eqnarray*}
where $\mathcal{T}_{\mathrm{BW}}=\{\tau^{(d)}:\text{ for $b\in\{1,\ldots,J\}$, } \tau_{i} \text{ is constant for } i\in B_{b}\}$
and
let $\mathcal{G}_{\mathrm{mon}}(\mathcal{B}(d))$ be
\begin{eqnarray*}
	\{X\to Q_{\mathrm{G}_{\tau}}(\cdot|X):\text{$\tau$ is equal to $(\tau^{(d)},0,0,\ldots)$ with } \tau^{(d)}\in \mathcal{T}_{\mathrm{mon}}\},
\end{eqnarray*}
where $\mathcal{T}_{\mathrm{mon}}=\{\tau^{(d)}:\tau_{1} \geq \tau_{2} \geq \cdots \geq \tau_{d} \geq 0 \}$.

Although
the decision space $\mathcal{G}_{\mathrm{BW}}(\mathcal{B}(d))$ is included in the decision space $\mathcal{G}_{\mathrm{mon}}(\mathcal{B}(d))$,
the following lemma states that if the growth rate of the numbers in each block in $\mathcal{B}(d)$ is controlled,
then the infimum of the Kullback--Leibler risk among $\mathcal{G}_{\mathrm{mon}}(\mathcal{B}(d))$
is bounded by 
a constant plus
a constant multiple of
the infimum of the Kullback--Leibler risk among $\mathcal{G}_{\mathrm{BW}}(\mathcal{B}(d))$.
The proof is provided in Appendix \ref{Appendix:ProofofSection4}.

\begin{lem}
\label{MimicMonotoneclass}
Let $d$ be any positive integer.
Let $\mathcal{B}(d)=\{B_{j}\}_{j=1}^{J}$ be a set of blocks whose cardinalities satisfy
\begin{eqnarray*}
\mathop{\mathrm{max}}_{1\leq j \leq J-1} \frac{b_{j+1}}{b_{j}} \leq 1+\eta
\end{eqnarray*}
for some $\eta>0$.
Then,
for any $\theta\in l_{2}$,
\begin{align*}
	\mathop{\inf}_{\widehat{Q}\in\mathcal{G}_{\mathrm{BW}}(\mathcal{B}(d))}
	R(\theta,\widehat{Q})
&\leq
(1+\eta)\mathop{\inf}_{\widehat{Q}\in\mathcal{G}_{\mathrm{mon}}(\mathcal{B}(d))}
R(\theta,\widehat{Q})
+\frac{b_{1}}{2}\log\left(\frac{v^{2}_{\varepsilon}}{v^{2}_{\varepsilon,\tilde{\varepsilon}}}\right).
\end{align*}
\end{lem}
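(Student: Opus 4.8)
The plan is to compare, block by block, the best risk attainable with a monotone variance vector against the best risk attainable with a blockwise-constant variance vector. By Lemma~\ref{GaussianMeasures}, for any $\tau=(\tau^{(d)},0,0,\ldots)$ the risk $R(\theta,Q_{\mathrm{G}_\tau})$ decomposes as a sum over coordinates $i$, plus the tail contribution $\sum_{i>d}\frac12\log(v^2_\varepsilon/v^2_{\varepsilon,\tilde\varepsilon})$ coming from coordinates with $\tau_i=0$; the latter is the same for both classes and is $\le \frac{b_1}{2}\log(v^2_\varepsilon/v^2_{\varepsilon,\tilde\varepsilon})$ only over the \emph{first} block. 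So it suffices to work with the per-coordinate summand, which I will write as $\phi(\tau_i^2,\theta_i^2)$ where $\phi(s,t):=\frac12\log\frac{1+s/v^2_{\varepsilon,\tilde\varepsilon}}{1+s/v^2_\varepsilon}+\frac12\frac{v^2_{\varepsilon,\tilde\varepsilon}+t}{v^2_{\varepsilon,\tilde\varepsilon}+s}-\frac12\frac{v^2_\varepsilon+t}{v^2_\varepsilon+s}$.

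First I would take an optimal (or near-optimal) monotone vector $\tau^{(d)}\in\mathcal{T}_{\mathrm{mon}}$ achieving $\inf_{\widehat Q\in\mathcal{G}_{\mathrm{mon}}(\mathcal{B}(d))}R(\theta,\widehat Q)$; its existence/continuity can be argued as in Lemma~\ref{LinearMinimax}, or one can work with an $\epsilon$-minimizer. From this I construct a blockwise-constant competitor: on block $B_j$ with $j\ge 2$, set the constant value to $\tau_{i_j^-}$, the value of the monotone vector at the \emph{first} index $i_j^-$ of block $B_{j}$ — equivalently, the largest value attained by $\tau^{(d)}$ on block $B_j$ — and on block $B_1$ set it to $0$ (this is why only $b_1$ coordinates pay the full penalty). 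The key point is that $\phi(s,t)$ is, for fixed $t$, increasing in $s$ on the relevant range (this is exactly the ``$\inf_\tau$ computation'' referenced as (\ref{Min_tau}) in Lemma~\ref{LinearMinimax}: $\phi(s,t)\ge 0$ with the optimum interior, and it is monotone increasing past the minimizer), so raising the variance on a coordinate only increases that coordinate's summand. Hence for $i\in B_j$ ($j\ge2$) the blockwise-constant summand $\phi(\tau^2_{i_j^-},\theta_i^2)$ is bounded by $\phi(\tau^2_{i'},\theta_i^2)$ for the index $i'\in B_{j-1}$ at which $\tau$ takes its value $\tau_{i_j^-}$ — here I use monotonicity of $\tau^{(d)}$ together with $b_{j}/b_{j-1}\le 1+\eta$ to charge the $b_j$ coordinates of block $B_j$ to the $b_{j-1}$ coordinates of block $B_{j-1}$, each at most $1+\eta$ times. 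Summing the block-$B_j$ contributions against the block-$B_{j-1}$ contributions of the monotone vector and adding the tail penalty on block $B_1$ gives the claimed inequality.

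The main obstacle I expect is making the ``charge block $j$ to block $j-1$'' bookkeeping rigorous: one needs that $\sum_{i\in B_j}\phi(\tau^2_{i_j^-},\theta_i^2)$ — a sum of $b_j$ equal-variance terms over the $\theta_i^2$ in block $B_j$ — is dominated by $(1+\eta)\sum_{i\in B_{j-1}}\phi(\tau^2_i,\theta_i^2)$, and this requires both that the variance used on $B_j$ never exceeds the variances $\tau^2_i$ used by the monotone vector on $B_{j-1}$ (clear from monotonicity, since $\tau_{i_j^-}\le \tau_i$ for all $i\in B_{j-1}$) \emph{and} that $\phi(s,\cdot)$ interacts well with the fact that the $\theta_i^2$ involved are different on the two blocks. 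Since $\phi(s,t)$ is also monotone in $t$ (increasing: larger signal, larger excess risk), the cleanest route is: $\sum_{i\in B_j}\phi(\tau^2_{i_j^-},\theta_i^2)\le b_j\cdot\max_{i\in B_j}\phi(\tau^2_{i_j^-},\theta_i^2)$ is too lossy; instead pair up indices of $B_j$ with indices of $B_{j-1}$ so that each $B_{j-1}$-index is used at most $\lceil b_j/b_{j-1}\rceil\le 1+\eta$ times (when $b_{j-1}\le b_j$; if $b_{j-1}>b_j$ the pairing is even easier) and use coordinatewise $\phi(\tau^2_{i_j^-},\theta_i^2)\le \phi(\tau^2_{i'},\theta_i^2)$ only after also telescoping/reordering the $\theta$-arguments — here one needs that relabeling the $\theta_i^2$ within the comparison is harmless, which follows because in the end we only bound the block-$B_j$ sum by $(1+\eta)$ times the block-$B_{j-1}$ sum of the \emph{same} monotone predictive distribution, summed over all $j$, and the indices never overlap. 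Carrying this out carefully — and handling the boundary cases where $\tau^{(d)}$ hits $0$ inside a block, or where $\lambda(\varepsilon,\tilde\varepsilon)$-type minimizers are not attained — is the technical heart; everything else is the decomposition from Lemma~\ref{GaussianMeasures} and the monotonicity of $\phi$.
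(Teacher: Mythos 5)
Your overall skeleton (take a near-optimal monotone $\tau^{(d)}$, replace it on each block by its largest value there, and charge block $B_j$ to block $B_{j-1}$ with a factor $1+\eta$) is the right one, but the execution has a genuine gap in exactly the place you flag as the ``technical heart''. The per-coordinate summand $\phi(s,t)$ is \emph{not} increasing in $s$: as the computation behind (\ref{Min_tau}) shows, for fixed $t=\theta_i^2$ it is minimized at $s=t$ and is decreasing on $[0,t)$, and nothing forces the (near-)optimal monotone variances to lie past that minimizer, so ``raising the variance only increases the summand'' is false in general. Moreover, even granting some monotonicity, your charging step compares $\phi(\tau_{i_j^-}^2,\theta_i^2)$ for $i\in B_j$ with $\phi(\tau_{i'}^2,\theta_i^2)$ for $i'\in B_{j-1}$, i.e.\ with the \emph{wrong} $\theta$-argument: since $\phi(s,\cdot)$ is increasing and $\theta$ is an arbitrary $l_2$ sequence with no ordering across blocks, you cannot pass from $\phi(\tau_{i'}^2,\theta_i^2)$ to the actual summand $\phi(\tau_{i'}^2,\theta_{i'}^2)$ of the monotone competitor; the ``relabeling is harmless'' claim is precisely what fails. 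Finally, setting the block-$B_1$ variance to $0$ does not cost $\tfrac{b_1}{2}\log(v^2_{\varepsilon}/v^2_{\varepsilon,\tilde\varepsilon})$: it costs $\sum_{i\in B_1}\theta_i^2/(2\tilde\varepsilon^2)$ (this is $\phi(0,\theta_i^2)$), which is unbounded over $\theta\in l_2$.

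The repair, which is what the paper's proof does, is to split each summand before charging: write it as $\tfrac12 f(\tau_i^2)+\tfrac12\frac{(v^2_{\varepsilon}-v^2_{\varepsilon,\tilde\varepsilon})\theta_i^2}{(v^2_{\varepsilon,\tilde\varepsilon}+\tau_i^2)(v^2_{\varepsilon}+\tau_i^2)}$ with $f(x)=\log\frac{1+x/v^2_{\varepsilon,\tilde\varepsilon}}{1+x/v^2_{\varepsilon}}-\frac{(v^2_{\varepsilon}-v^2_{\varepsilon,\tilde\varepsilon})x}{(v^2_{\varepsilon}+x)(v^2_{\varepsilon,\tilde\varepsilon}+x)}$, and define $\bar\tau_i$ as the largest value of $\tau$ on the block containing $i$ \emph{including on $B_1$} (so $\bar\tau_i=\tau_1$ there, not $0$). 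Since $\bar\tau_i\ge\tau_i$ coordinatewise and the $\theta$-dependent term is \emph{decreasing} in the variance, that part only decreases under the replacement, with matched indices and no factor, which disposes of the mismatched-$\theta$ problem entirely. The $(1+\eta)$ cross-block charging is then applied only to the $\theta$-free part, where relabeling is a non-issue because $f$ is increasing and $\tau_{(j)}\le\tau_m$ for $m\in B_{j-1}$, and the block-$B_1$ contribution of the $\theta$-free part is at most $\tfrac{b_1}{2}\log(v^2_{\varepsilon}/v^2_{\varepsilon,\tilde\varepsilon})$ because $f(x)\le\log(v^2_{\varepsilon}/v^2_{\varepsilon,\tilde\varepsilon})$ uniformly. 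Your decomposition into a single function $\phi$ cannot be made to work without this split.
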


The following lemma states the relationship between 
the Kullback--Leibler risk of the blockwise Stein predictive distribution 
and that of the predictive distribution in $\mathcal{G}_{\mathrm{mon}}(\mathcal{B}(d))$.
The proof is provided in Appendix \ref{Appendix:ProofofSection4}.

\begin{lem}
\label{Oracle_blockwiseStein}
Let $d$ be any positive integer.
Let $\mathcal{B}(d)=\{B_{j}\}_{j=1}^{J}$ be a set of blocks whose cardinalities satisfy
\begin{eqnarray*}
\mathop{\max}_{1\leq j\leq J-1}\frac{b_{j+1}}{b_{j}} \leq 1+\eta
\end{eqnarray*}
for some $\eta>0$.
Let $Q_{h^{(d)}_{\mathcal{B}(d)}}$ be the blockwise Stein predictive distribution 
with the set of blocks $\mathcal{B}(d)$
defined by (\ref{def_blockwiseStein}).
Then, 
for any $\theta\in l_{2}$,
\begin{eqnarray}
	R\left(\theta,Q_{h^{(d)}_{\mathcal{B}(d)}}\right)
	\leq
	(1+\eta)\inf_{ \widehat{Q}\in \mathcal{G}_{\mathrm{mon}}(\mathcal{B}(d))}
	R(\theta,\widehat{Q})
	+\left(J+\frac{b_{1}}{2}\right)\log\left(\frac{v^{2}_{\varepsilon}}{v^{2}_{\varepsilon,\tilde{\varepsilon}}}\right).\quad\quad
\label{oracleineq_blockwiseStein}
\end{eqnarray}
\end{lem}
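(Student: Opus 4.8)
The plan is to combine the product structure of the blockwise Stein predictive distribution with the single-block oracle inequality (Lemma \ref{Oracle}) and the reduction from the monotone class to the blockwise class (Lemma \ref{MimicMonotoneclass}). First I would use the fact that the Kullback--Leibler risk decomposes additively over the blocks: since $Q_{h^{(d)}_{\mathcal{B}(d)}}$ is a product over $j=1,\dots,J$ of the $b_j$-dimensional Bayesian predictive distribution based on Stein's prior on $\mathbb{R}^{b_j}$ (tensored with the fixed Gaussian factors for coordinates $>d$, which contribute nothing to the risk because the means are known to be the corresponding $\theta_i$'s—wait, they are set to $0$, so one must check these tail coordinates; but $\Theta_{\mathrm{Sobolev}}$ forces $\theta_i\to 0$, and in the statement $\theta\in l_2$ is general, so the tail terms contribute $\sum_{i>d}\{\tfrac12\log(v_\varepsilon^2/v_{\varepsilon,\tilde\varepsilon}^2\cdot\text{ratio})\}$-type corrections which I would absorb; more carefully, the blockwise predictive uses $\mathcal N(0,\tilde\varepsilon^2)$ for $i>d$, matching the "$\tau=0$" choice, and the risk of that factor is exactly the $\tau_i=0$ term appearing also in any element of $\mathcal G_{\mathrm{mon}}(\mathcal B(d))$, so it cancels in the comparison). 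Thus
\[
R\bigl(\theta,Q_{h^{(d)}_{\mathcal{B}(d)}}\bigr)=\sum_{j=1}^{J} R_{b_j}\bigl(\theta_{B_j},Q^{(b_j)}_{h^{(b_j)}}\bigr)+(\text{tail terms}).
\]

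Next, for each block $B_j$ I would apply Lemma \ref{Oracle} (when $b_j>2$; for $b_j\le 2$ the Stein prior is the uniform prior and one uses the trivial bound $R_{b_j}(\theta_{B_j},Q^{(b_j)}_{u^{(b_j)}})=\tfrac{b_j}{2}\log(v_\varepsilon^2/v_{\varepsilon,\tilde\varepsilon}^2)$, which is itself $\le$ oracle $+\tfrac{b_j}{2}\log(v_\varepsilon^2/v_{\varepsilon,\tilde\varepsilon}^2)$ since the oracle risk is nonnegative). This bounds each block's risk by $\log(v_\varepsilon^2/v_{\varepsilon,\tilde\varepsilon}^2)$ plus the block oracle risk $\tfrac{b_j}{2}\log\bigl((1+(\|\theta_{B_j}\|^2/b_j)/v_{\varepsilon,\tilde\varepsilon}^2)/(1+(\|\theta_{B_j}\|^2/b_j)/v_\varepsilon^2)\bigr)$. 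Summing the constants over $j$ gives $J\log(v_\varepsilon^2/v_{\varepsilon,\tilde\varepsilon}^2)$. The sum of the block oracle risks is, by the same computation as in the proof of Lemma \ref{LinearMinimax} (equation (\ref{Min_tau})), exactly $\inf_{\widehat Q\in\mathcal G_{\mathrm{BW}}(\mathcal B(d))}R(\theta,\widehat Q)$ — this is the key identification: optimizing a constant variance $\tau$ within each block is precisely the "in-block least-favorable prior variance" computation, and its optimum value equals the block oracle risk with $\|\theta_{B_j}\|^2/b_j$ playing the role of the per-coordinate squared mean.

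Finally I would invoke Lemma \ref{MimicMonotoneclass} to replace $\inf_{\mathcal G_{\mathrm{BW}}(\mathcal B(d))}R$ by $(1+\eta)\inf_{\mathcal G_{\mathrm{mon}}(\mathcal B(d))}R+\tfrac{b_1}{2}\log(v_\varepsilon^2/v_{\varepsilon,\tilde\varepsilon}^2)$, and collect terms:
\[
R\bigl(\theta,Q_{h^{(d)}_{\mathcal{B}(d)}}\bigr)\le (1+\eta)\inf_{\widehat Q\in\mathcal G_{\mathrm{mon}}(\mathcal B(d))}R(\theta,\widehat Q)+\Bigl(J+\tfrac{b_1}{2}\Bigr)\log\Bigl(\tfrac{v_\varepsilon^2}{v_{\varepsilon,\tilde\varepsilon}^2}\Bigr).
\]
The main obstacle I anticipate is the careful bookkeeping of the tail coordinates $i>d$ and the low-dimensional blocks $b_j\le 2$: one must verify that in both the blockwise-Stein risk and in the infimum over $\mathcal G_{\mathrm{mon}}(\mathcal B(d))$ these contribute the same quantity so that they cancel, rather than producing an uncontrolled discrepancy. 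A secondary technical point is justifying the exact identification of $\sum_j$ (block oracle risk) with $\inf_{\mathcal G_{\mathrm{BW}}}R$: this needs the monotonicity/convexity argument from Lemma \ref{LinearMinimax}'s proof to guarantee that the unconstrained-in-$\tau$ minimizer within each block is the one attaining the stated oracle value, and that there is no interaction across blocks (which is immediate from the additive risk decomposition). Everything else is substitution of the already-established lemmas.
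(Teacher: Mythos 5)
Your proposal is correct and follows essentially the same route as the paper: decompose the risk of $Q_{h^{(d)}_{\mathcal{B}(d)}}$ additively over the blocks (plus the tail terms for $i>d$), apply Lemma \ref{Oracle} blockwise (with the trivial bound for blocks of size at most two), identify the sum of block oracle risks plus tail with $\inf_{\widehat{Q}\in\mathcal{G}_{\mathrm{BW}}(\mathcal{B}(d))}R(\theta,\widehat{Q})$ via the computation in (\ref{Min_tau}), and finish with Lemma \ref{MimicMonotoneclass}. The points you flag as potential obstacles (tail coordinates and small blocks) are handled exactly as you anticipate and cause no discrepancy.
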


The following lemma states that the WBG system satisfies the assumption in Lemmas \ref{MimicMonotoneclass} and \ref{Oracle_blockwiseStein}.
The proof is due to \citet{Tsybakov(2009)}.

\begin{lem}[e.g.,~Lemma 3.12 in \citet{Tsybakov(2009)}]
\label{WGBproperty}
Let $d(\varepsilon)$ be $\lfloor 1/\varepsilon^{2}\rfloor$.
Let $\mathcal{B}^{*}_{\varepsilon}=\{B^{*}_{\varepsilon,j}\}_{j=1}^{J(\varepsilon)}$
be the WGB system defined by (\ref{def_WGB}) with cardinalities
$\{b^{*}_{\varepsilon,j}\}_{j=1}^{J(\varepsilon)}$.
Then, there exist $0<\varepsilon_{0}<1$ and $C_{0}>0$ such that
\begin{eqnarray*}
	J(\varepsilon) \leq C_{0} \log^2(1/v_{\varepsilon}) \text{ for any } \varepsilon\in(0,\varepsilon_{0})
\end{eqnarray*}
and
\begin{eqnarray*}
	\mathop{\max}_{1\leq i \leq J(\varepsilon)-1}\frac{b^{*}_{\varepsilon,i+1}}{b^{*}_{\varepsilon,i}}
\leq 1+3\rho_{\varepsilon} \text{ for any } \varepsilon\in(0,\varepsilon_{0}).
\end{eqnarray*}
\end{lem}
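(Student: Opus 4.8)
The plan is to treat this as an elementary counting argument about the weakly geometric blocks, using only the crude two-sided bound $x-1 < \lfloor x\rfloor \le x$ (and $\lceil x\rceil \ge x$) together with the defining relations $\rho_\varepsilon = (\log(1/\varepsilon))^{-1} = 1/\log(1/v_\varepsilon)$ (recall $v_\varepsilon^2 = \varepsilon^2$) and $b^*_{\varepsilon,1} = \lceil \rho_\varepsilon^{-1}\rceil \ge \rho_\varepsilon^{-1}$. First I would fix $\varepsilon_0\in(0,1)$ small enough that $\rho_\varepsilon \le 1/3$ (and hence $b^*_{\varepsilon,1}\ge 2$) for all $\varepsilon\in(0,\varepsilon_0)$, which is possible since $\rho_\varepsilon\to 0$; further shrinking of $\varepsilon_0$ at the end will absorb the other thresholds needed below.

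For the ratio bound I would split $1\le i\le J(\varepsilon)-1$ into three cases. For $i=1$, directly $b^*_{\varepsilon,2}/b^*_{\varepsilon,1} = \lfloor b^*_{\varepsilon,1}(1+\rho_\varepsilon)\rfloor/b^*_{\varepsilon,1}\le 1+\rho_\varepsilon\le 1+3\rho_\varepsilon$. For an interior index $2\le i\le J(\varepsilon)-2$ both cardinalities have the geometric form, and I would use $\lfloor b^*_{\varepsilon,1}(1+\rho_\varepsilon)^{i-1}\rfloor \ge b^*_{\varepsilon,1}(1+\rho_\varepsilon)^{i-1}-1 \ge b^*_{\varepsilon,1}(1+\rho_\varepsilon)^{i-1}(1-\rho_\varepsilon)$ — the last step because $b^*_{\varepsilon,1}(1+\rho_\varepsilon)^{i-1}\rho_\varepsilon \ge b^*_{\varepsilon,1}\rho_\varepsilon\ge 1$ — together with $\lfloor\cdot\rfloor\le(\cdot)$ in the numerator, to get $b^*_{\varepsilon,i+1}/b^*_{\varepsilon,i}\le (1+\rho_\varepsilon)/(1-\rho_\varepsilon)$; and $(1+\rho_\varepsilon)/(1-\rho_\varepsilon)\le 1+3\rho_\varepsilon$ is exactly equivalent to $\rho_\varepsilon(1-3\rho_\varepsilon)\ge 0$, which holds since $\rho_\varepsilon\le 1/3$. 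Finally, for $i=J(\varepsilon)-1$ the numerator is the residual block $b^*_{\varepsilon,J(\varepsilon)}$; minimality in the definition of $J(\varepsilon)$ gives $b^*_{\varepsilon,1}+\sum_{j=2}^{J(\varepsilon)}\lfloor b^*_{\varepsilon,1}(1+\rho_\varepsilon)^{j-1}\rfloor \ge d(\varepsilon)$, hence $b^*_{\varepsilon,J(\varepsilon)} = d(\varepsilon)-\sum_{j=1}^{J(\varepsilon)-1}b^*_{\varepsilon,j}\le \lfloor b^*_{\varepsilon,1}(1+\rho_\varepsilon)^{J(\varepsilon)-1}\rfloor$, so this last ratio is dominated by the geometric ratio already handled (the interior case when $J(\varepsilon)\ge 3$, or the $i=1$ case when $J(\varepsilon)=2$).

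For the bound on $J(\varepsilon)$ I would again use minimality: taking $m=J(\varepsilon)-1$ the partial sum falls strictly short of $d(\varepsilon)$, i.e. $\sum_{j=1}^{J(\varepsilon)-1}b^*_{\varepsilon,j} < d(\varepsilon)\le 1/\varepsilon^2$. Bounding the sum below by a geometric series via $\lfloor b^*_{\varepsilon,1}(1+\rho_\varepsilon)^{j-1}\rfloor\ge \tfrac12 b^*_{\varepsilon,1}(1+\rho_\varepsilon)^{j-1}$ (valid since $b^*_{\varepsilon,1}\ge 2$) and $b^*_{\varepsilon,1}\ge\rho_\varepsilon^{-1}$ yields $\frac{(1+\rho_\varepsilon)^{J(\varepsilon)-1}-1}{2\rho_\varepsilon^2} < \frac1{\varepsilon^2}$, hence $(1+\rho_\varepsilon)^{J(\varepsilon)-1} < 1+2\rho_\varepsilon^2/\varepsilon^2$. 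Taking logarithms, using $\log(1+\rho_\varepsilon)\ge\rho_\varepsilon/2$ on the left, $\log(1+2\rho_\varepsilon^2/\varepsilon^2)\le \log 4 + 2\log\rho_\varepsilon + 2\log(1/\varepsilon)\le 3\log(1/\varepsilon)$ on the right for small $\varepsilon$ (since $\log\rho_\varepsilon<0$ and $2\rho_\varepsilon^2/\varepsilon^2\ge 1$), and $\rho_\varepsilon^{-1}=\log(1/\varepsilon)$, I obtain $J(\varepsilon)-1 < 6\log^2(1/\varepsilon) = 6\log^2(1/v_\varepsilon)$, so the first claim holds with, say, $C_0=7$ for all $\varepsilon\in(0,\varepsilon_0)$.

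The only real difficulty is bookkeeping: keeping the two boundary blocks consistent with the interior estimates — the first, $b^*_{\varepsilon,1}$, being a ceiling rather than a geometric floor, and the last, $b^*_{\varepsilon,J(\varepsilon)}$, being a residual whose size must be controlled through the minimality of $J(\varepsilon)$ — and verifying that the finitely many thresholds on $\rho_\varepsilon$ invoked along the way ($\rho_\varepsilon\le 1/3$, $b^*_{\varepsilon,1}\ge 2$, $2\rho_\varepsilon^2/\varepsilon^2\ge 1$, $\log 4\le\log(1/\varepsilon)$) are all met by a single small enough $\varepsilon_0$. There is no conceptual obstacle; the argument is the standard one for weakly geometric block systems, here carried out with explicit constants.
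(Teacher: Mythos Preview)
Your argument is correct and is precisely the standard elementary counting argument for weakly geometric blocks; the paper does not give its own proof of this lemma but simply cites Tsybakov (2009), Lemma~3.12, whose proof proceeds along the same lines as yours. There is nothing to add beyond noting that your careful treatment of the two boundary blocks (the ceiling $b^{*}_{\varepsilon,1}$ and the residual $b^{*}_{\varepsilon,J(\varepsilon)}$) and the explicit tracking of the $\rho_\varepsilon$-thresholds make the argument self-contained, whereas the paper is content to defer to the cited reference.
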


\vspace{4mm}
Based on these lemmas, we provide the proof of Theorem \ref{SobolevAdaptive}.

\begin{proof}[Proof of Theorem \ref{SobolevAdaptive}]
First,
since the WGB system $\mathcal{B}^{*}_{\varepsilon}$ satisfies the assumption in Lemma \ref{Oracle_blockwiseStein},
it follows
from Lemma \ref{Oracle_blockwiseStein}
that
for $0<\varepsilon<\varepsilon_{0}$,
\begin{align*}
	\mathop{\sup}_{\theta\in\Theta_{\mathrm{Sobolev}}(\alpha,B)}
R\left(\theta,Q_{h^{\left(d(\varepsilon)\right)}_{\mathcal{B}^{*}_{\varepsilon}}}\right)
\leq&
(1+3\rho_{\varepsilon})\mathop{\sup}_{\theta\in\Theta_{\mathrm{Sobolev}}(\alpha,B)}
\inf_{\widehat{Q}\in \mathcal{G}_{\mathrm{mon}}(\mathcal{B}^{*}_{\varepsilon})}R(\theta,\widehat{Q})
\nonumber\\
&+\log\left(\frac{v^{2}_{\varepsilon}}{v^{2}_{\varepsilon,\tilde{\varepsilon}}}\right)
\left\{C_{0}\log^{2}(1/v_{\varepsilon})+\frac{\rho_{\varepsilon}^{-1}+1}{2}\right\}.
\end{align*}

Second,
we show that
the asymptotically minimax predictive distribution 
$Q_{\mathrm{G}_{\tau=\tau^{*}(\varepsilon,\tilde{\varepsilon})}}$
in Theorem \ref{AsymptoticallyMinimaxTheorem}
is also characterized as follows: for a sufficiently small $\varepsilon>0$,
\begin{eqnarray*}
	\mathop{\sup}_{\theta\in\Theta_{\mathrm{Sobolev}}(\alpha,B)}
	\mathop{\inf}_{\widehat{Q}\in \mathcal{G}_{\mathrm{mon}}(\mathcal{B}^{*}_{\varepsilon})}
	R(\theta,\widehat{Q})
=\mathop{\sup}_{\theta\in\Theta_{\mathrm{Sobolev}}(\alpha,B)}
R(\theta,Q_{G_{\tau=\tau^{*}(\varepsilon,\tilde{\varepsilon})}}).
\end{eqnarray*}
It suffices to show that
the Bayesian predictive distribution $Q_{\mathrm{G}_{\tau=\tau^{*}(\varepsilon,\tilde{\varepsilon})}}$
is included in $\mathcal{G}_{\mathrm{mon}}(\mathcal{B}^{*}_{\varepsilon})$
for a sufficiently small $\varepsilon>0$.
This is proved as follows.
Recall that $T(\varepsilon,\tilde{\varepsilon})$ defined by (\ref{truncationnumber})
is the maximal index of which $\tau^{*}_{i}(\varepsilon,\tilde{\varepsilon})$ defined by (\ref{optimalvariance})
is non-zero.
From the expansion of $T(\varepsilon,\tilde{\varepsilon})$
given in (\ref{N_expansion}),
for a sufficiently small $\varepsilon>0$,
for $i>A\varepsilon^{-2/(2\alpha+1)}$ with some constant $A$,
$\left(\tau^{*}_{i}(\varepsilon,\tilde{\varepsilon})\right)^{2}$
vanishes.
Since $\varepsilon^{-2/(2\alpha+1)} < \varepsilon^{-2}$ for $\varepsilon<1$,
the Bayesian predictive distribution $Q_{\mathrm{G}_{\tau=\tau^{*}(\varepsilon,\tilde{\varepsilon})}}$
is included in $\mathcal{G}_{\mathrm{mon}}(\mathcal{B}^{*}_{\varepsilon})$
for a sufficiently small $\varepsilon>0$.

Combining the first argument with the second argument yields
\begin{align*}
	\mathop{\sup}_{\theta\in\Theta_{\mathrm{Sobolev}}(\alpha,B)}
	&R\left(\theta,Q_{h^{\left(d(\varepsilon)\right)}_{\mathcal{B}^{*}_{\varepsilon}}}\right)
\nonumber\\
	&\leq
(1+3\rho_{\varepsilon})\mathop{\sup}_{\theta\in\Theta_{\mathrm{Sobolev}}(\alpha,B)} R(\theta,Q_{\mathrm{G}_{\tau=\tau^{*}(\varepsilon,\tilde{\varepsilon})}})
	\nonumber\\
	&\quad+C_{0}\log\left(\frac{v^{2}_{\varepsilon}}{v^{2}_{\varepsilon,\tilde{\varepsilon}}}\right)\log^{2}(1/v_{\varepsilon})+\mathrm{o}(1)
\nonumber\\
&=(1+3\rho_{\varepsilon})
\left(\frac{B}{\varepsilon^{2}}\right)^{1/(2\alpha+1)}
\mathcal{P}^{*}
\nonumber\\
&\quad
\times
\left(1+\frac{C_{0}\log(v^{2}_{\varepsilon}/v^{2}_{\varepsilon,\tilde{\varepsilon}})}{\mathcal{P}^{*}}
\varepsilon^{2/(2\alpha+1)}B^{-1/(2\alpha+1)}\log^{2}(1/v_{\varepsilon})\right)+\mathrm{o}(1)
\nonumber\\
&=\mathcal{P}^{*}
\left(\frac{B}{\varepsilon^{2}}\right)^{1/(2\alpha+1)}
(1+\mathrm{o}(1)).
\end{align*}
This completes the proof.
\end{proof}

\section{Numerical experiments}\label{Sec:Numerical}

In Subsection \ref{Subsec:Exactsampling},
we provide an exact sampling method for the blockwise Stein predictive distribution.
In Subsection \ref{Subsec:Numerical},
we provide two numerical experiments concerning the performance of that predictive distribution.

\subsection{Exact sampling from the blockwise Stein predictive distribution}\label{Subsec:Exactsampling}

We provide an exact sampling method from the posterior distribution based on Stein prior $h^{(d)}(\theta^{(d)}):=||\theta^{(d)}||^{2-d}$
on $\mathbb{R}^{d}$.
Owing to the block structure,
it suffices to provide an exact sampling method from the posterior distribution based on Stein's prior.

We use the following mixture representation of Stein's prior:
\begin{align*}
	h^{(d)}(\theta^{(d)})=c(d)\int_{0}^{\infty}\frac{1}{(2\pi t)^{d/2}}\mathrm{e}^{-\frac{||\theta^{(d)}||^{2}}{2t}} \mathrm{d}t,
\end{align*}
where $c(d)$ is a constant depending only on $d$.
Thus, as for the posterior distribution of $h^{(d)}$, we have
\begin{align*}
	h^{(d)}(\theta^{(d)}|x^{(d)})=\int_{0}^{\infty}\pi(\theta^{(d)}|t,x^{(d)})f(t|x^{(d)})\mathrm{d}t,
\end{align*}
where 
\begin{align*}
	\pi(\theta^{(d)}|t,x^{(d)}):=\frac{1}{\left\{2\pi \left(\frac{\varepsilon^{2}}{1+\varepsilon^{2}/t}\right)\right\}^{d/2}}
		\exp\left\{-\frac{\sum_{i=1}^{d}(\theta_{i}-x_{i}/(1+\varepsilon^{2}/t))^{2}}{2\left(\frac{\varepsilon^{2}}{1+\varepsilon^{2}/t}\right)} \right\}
\end{align*}
and
\begin{align*}
	f(t|x^{(d)}):=\frac{\exp\left\{ -\frac{d}{2}\log(\varepsilon^{2}+t)-\frac{||x^{(d)}||^{2}}{2(\varepsilon^{2}+t)} \right\}}
	{\int_{0}^{\infty}\exp\left\{ -\frac{d}{2}\log(\varepsilon^{2}+\tilde{t})-\frac{||x^{(d)}||^{2}}{2(\varepsilon^{2}+\tilde{t})} \right\}\mathrm{d}\tilde{t}}.
\end{align*}
Here $\pi(\theta^{(d)}|t,x^{(d)})$ is the probability density function of the normal distribution.
Under the transformation $t\to \kappa:=\varepsilon^{2}/(\varepsilon^{2}+t)$,
the distribution $f(\kappa|x^{(d)})$ of $\kappa$ is a truncated Gamma distribution:
\begin{align*}
f(\kappa|x^{(d)})=1_{(0,1]}(\kappa)\frac{\kappa^{(d/2-1)-1}\exp\{-\frac{||x^{(d)}||^{2}}{2\varepsilon^{2}}\kappa\}}
{\int_{0}^{1}\tilde{\kappa}^{(d/2-1)-1}\exp\{-\frac{||x^{(d)}||^{2}}{2\varepsilon^{2}}\tilde{\kappa}\}\mathrm{d}\tilde{\kappa}}.
\end{align*}

Therefore,
we obtain an exact sampling from the posterior distribution based on Stein's prior
by sampling the normal distribution and the truncated Gamma distribution.
For the sampling from the truncated Gamma distribution,
we use the acceptance-rejection algorithm 
for truncated Gamma distributions
based on the mixture of beta distributions;
see \citet{Philippe(1997)}.

\subsection{Comparison with a fixed variance}\label{Subsec:Numerical}

Though we proved the asymptotic optimality of
the blockwise Stein predictive distribution with the WGB system,
it does not follow that the blockwise Stein predictive distribution
behaves well with a fixed variance $\varepsilon$.

In this subsection,
we examine the behavior with a fixed $\varepsilon$ of the blockwise Stein predictive distribution with the WGB system
compared to 
the plugin predictive distribution with the Bayes estimator based on the blockwise Stein prior
and
the asymptotically minimax predictive distribution in the Sobolev ellipsoid $\Theta_{\mathrm{Sobolev}}(\alpha,B)$ given in Theorem \ref{AsymptoticallyMinimaxTheorem}.
In this subsection,
we call the asymptotically minimax predictive distribution in the Sobolev ellipsoid $\Theta_{\mathrm{Sobolev}}(\alpha,B)$ given in Theorem \ref{AsymptoticallyMinimaxTheorem}
the Pinsker-type predictive distribution with $\alpha$ and $B$.

For the comparison,
we consider the 6 predictive settings with $\varepsilon=0.05$:
\begin{itemize}
\setlength{\leftskip}{1cm}
\item In the first setting,
$\theta_{i}=i^{-6}$ for $i\in\mathbb{N}$
and 
$\gamma=1$;
\item In the second setting,
$\theta_{i}=i^{-6}$ for $i\in\mathbb{N}$
and
$\gamma=1/3$;
\item In the third setting,
$\theta_{i}=i^{-6}$ for $i\in\mathbb{N}$
and
$\gamma=1/10$;
\item  In the fourth setting,
$\theta_{i}=i^{-1.5}$ for $i\in\mathbb{N}$
and 
$\gamma=1$;
\item In the fifth setting,
$\theta_{i}=i^{-1.5}$ for $i\in\mathbb{N}$
and 
$\gamma=1/3$;
\item In the sixth setting,
$\theta_{i}=i^{-1.5}$ for $i\in\mathbb{N}$
and 
$\gamma=1/10$.
\end{itemize}
Let $d(\varepsilon)=\lfloor 1/\varepsilon^{2}\rfloor=399$.

In each setting,
we obtain $1000$ samples of $y^{(d(\varepsilon))}$ distributed according to the blockwise Stein predictive distribution with the WGB system
up to the $d(\varepsilon)$-th order
using the sampling method described in Subsection \ref{Subsec:Exactsampling},
and we construct the coordinate-wise $80\%$-predictive interval of $y^{(d(\varepsilon))}$ using $1000$ samples.
In each setting,
we use the Pinsker-type predictive distribution with $\alpha$ and $B$
such that $\sum_{i=1}^{\infty}i^{2\alpha}\theta_{i}^{2}\leq B$:
we use $\alpha=2$ and $B=3$ in the firth, second, and third settings.
We use $\alpha=0.75$ and $B=3$ in the fourth, fifth, and sixth settings.

In each setting,
we obtain 5000 samples from the true distribution of $y$
and calculate the means of the coordinate-wise mean squared errors normalized by $\tilde{\varepsilon}^{2}$,
and then
calculate
the means and the standard deviations of the counts of the samples included in the predictive intervals.

\begin{table}[t]
\caption{Mean square error: The smallest value in each setting is underlined.}
	\begin{tabular}{|c|c|c|c|}\hline
	Setting number  & Bayes with WGBStein & Plugin with WGBStein & Pinsker \\ \hline
	First & 1.11             & 1.11                  & $\underline{1.03}$   \\\hline
	Second & 1.89             & 1.89                  & $\underline{1.12}$   \\\hline
	Third & 8.54             & 8.52                  & $\underline{1.85}$   \\\hline
	Fourth & 1.08             & 1.08                 & $\underline{1.04}$   \\\hline
	Fifth & 1.89             & 1.89                 & $\underline{1.50}$   \\\hline
	Sixth & 21.3             & 21.3                 & $\underline{11.0}$   \\\hline
\end{tabular}
\label{Table:MISE}

\end{table}
\begin{table}
\caption{Mean of the average percentages of the coverage (standard deviation): The value nearest to $80\%$ is underlined.}
	\begin{tabular}{|c|c|c|c|c|}\hline
	Setting number       & Bayes with WGBStein         & Plugin with WGBStein          & Pinsker \\ \hline
	First              & 82.4 (4.53$\times 10^{-2}$) & 78.6 (5.28$\times 10^{-2}$)   & $\underline{79.6}$ (4.03$\times 10^{-2}$) \\\hline
	Second              & 91.1 (8.08$\times 10^{-2}$) & 71.4 (14.9$\times 10^{-2}$)   & $\underline{79.7}$ (4.56$\times 10^{-2}$) \\\hline
	Third              & 98.9 (5.72$\times 10^{-2}$) & 45.8 (29.8$\times 10^{-2}$)   & $\underline{80.2}$ (5.06$\times 10^{-2}$) \\\hline
	Fourth              & 82.3 (3.04$\times 10^{-2}$) & 78.4 (6.11$\times 10^{-2}$)   & $\underline{79.8}$ (3.44$\times 10^{-2}$) \\\hline
	Fifth              & 91.6 (7.66$\times 10^{-2}$) & 71.0 (14.6$\times 10^{-2}$)   & $\underline{80.6}$ (8.21$\times 10^{-2}$) \\\hline
	Sixth              & 97.1 (13.9$\times 10^{-2}$) & 52.9 (26.9$\times 10^{-2}$)   & $\underline{83.5}$ (11.3$\times 10^{-2}$) \\\hline
\end{tabular}
\label{Table:APC}
\end{table}

Tables \ref{Table:MISE} and \ref{Table:APC} show
that the Pinsker-type predictive distribution (abbreviated by Pinsker) 
has the smallest mean squared error
and
has the sharpest predictive interval.
It is because the Pinsker-type predictive distribution uses $\alpha$ and $B$.
The blockwise Stein predictive distribution (abbreviated by Bayes with WGBStein)
and
the plugin predictive distribution with the Bayes estimator based on the blockwise Stein prior (abbreviated by Plugin with WGBStein)
have nearly the same performance in the mean squared error.
The blockwise Stein predictive distribution
has a wider predictive interval than the plugin predictive distribution.
Its predictive interval has a smaller variance
than that of the plugin predictive distribution in all settings.
In the next paragraph,
we consider the reason for this phenomenon by using
the transformation of the infinite sequence model to the function model discussed in Section 2.

\begin{figure}[h]
	\begin{minipage}{0.4\hsize}
		{\centering
			\subfigure[The true function and Pinsker in the second setting]{\includegraphics[width=4cm]{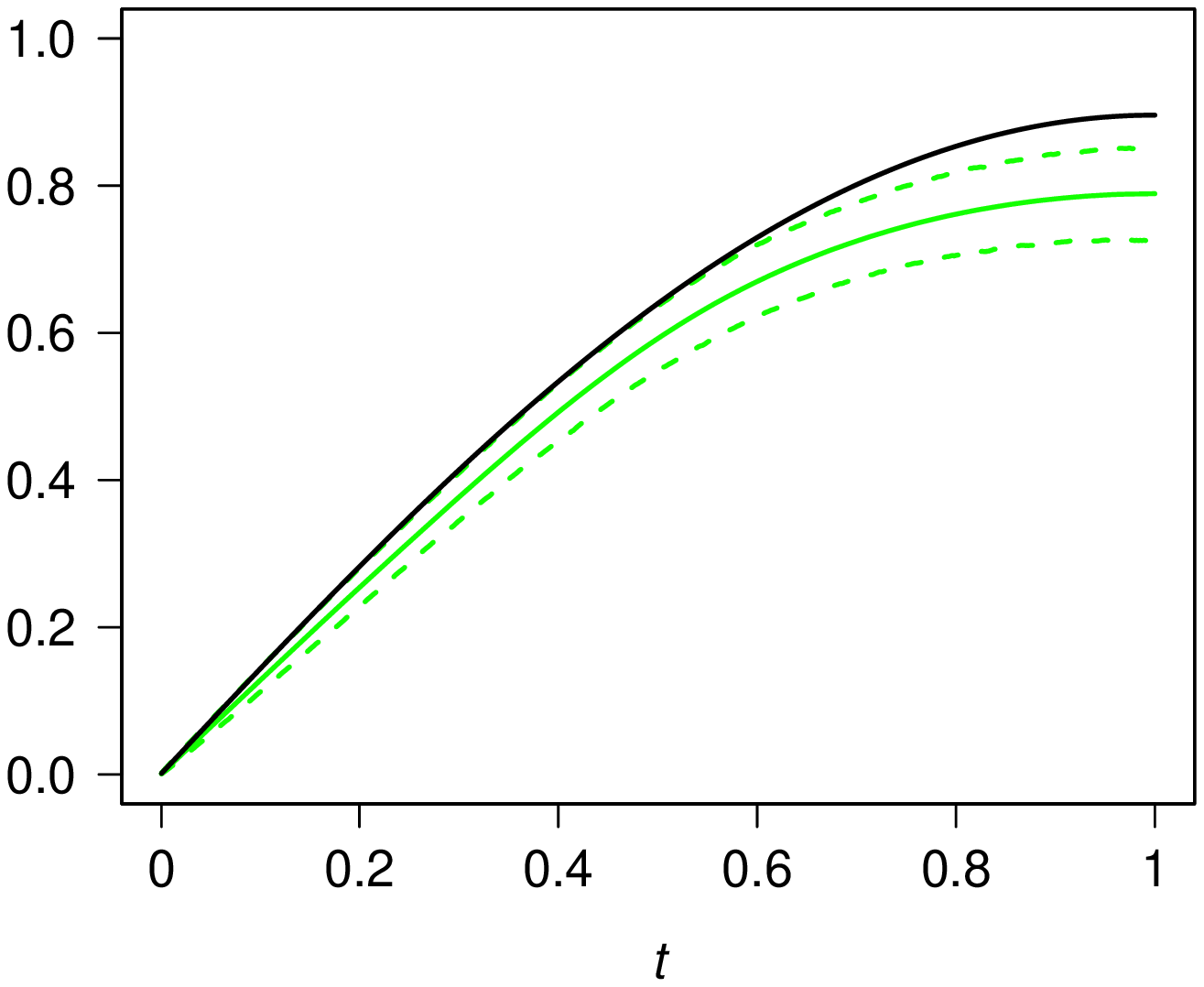}}
		\label{fig:PI:1st}
	}
	\end{minipage}
	\begin{minipage}{0.4\hsize}
		{\centering
			\subfigure[The Bayes and the plugin with WGBStein in the second setting]{\includegraphics[width=4cm]{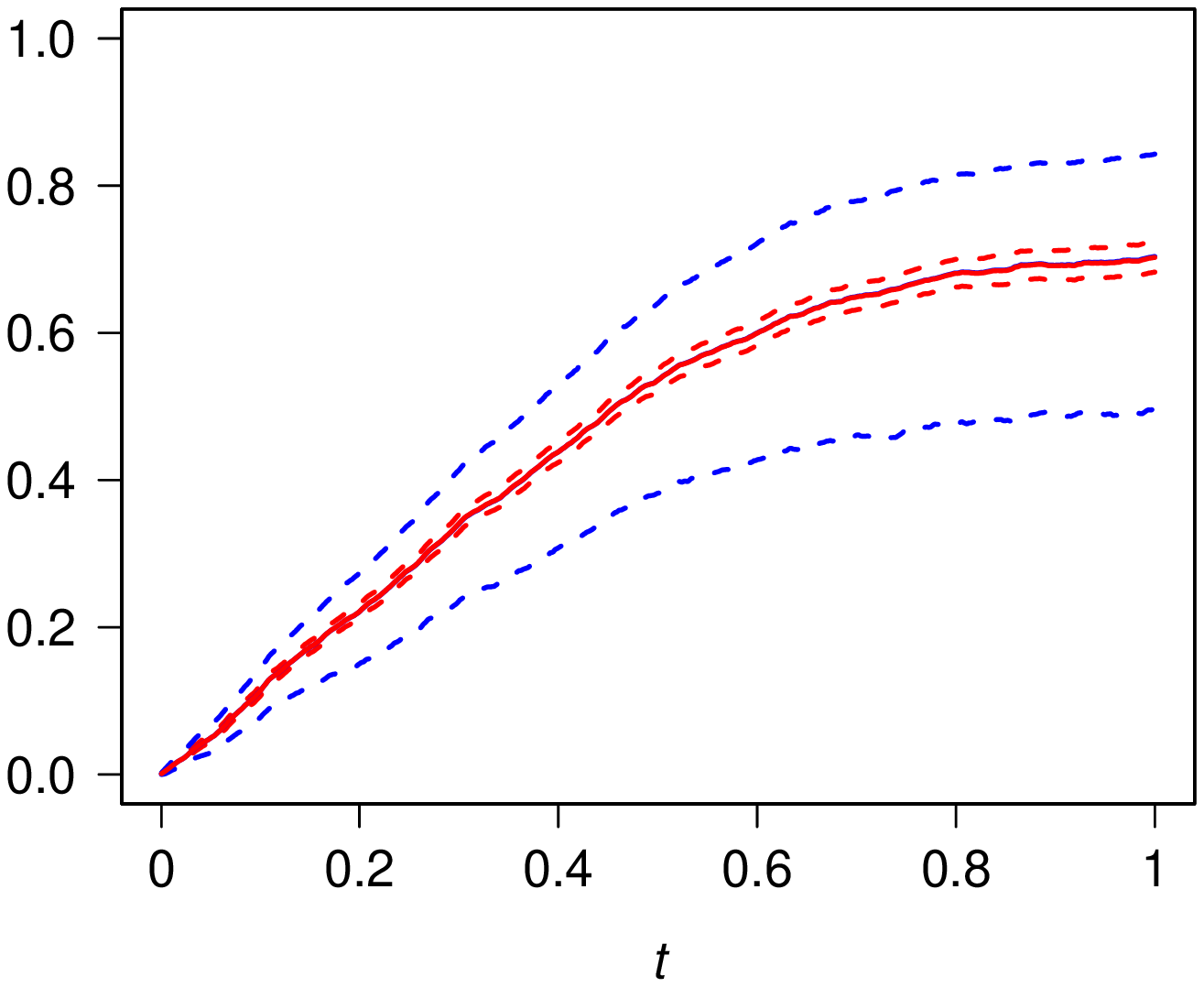}}
		\label{fig:PI:2nd}	
	}
	\end{minipage}
	\begin{minipage}{0.4\hsize}
		{\centering
			\subfigure[The true function and Pinsker in the fourth setting]{\includegraphics[width=4cm]{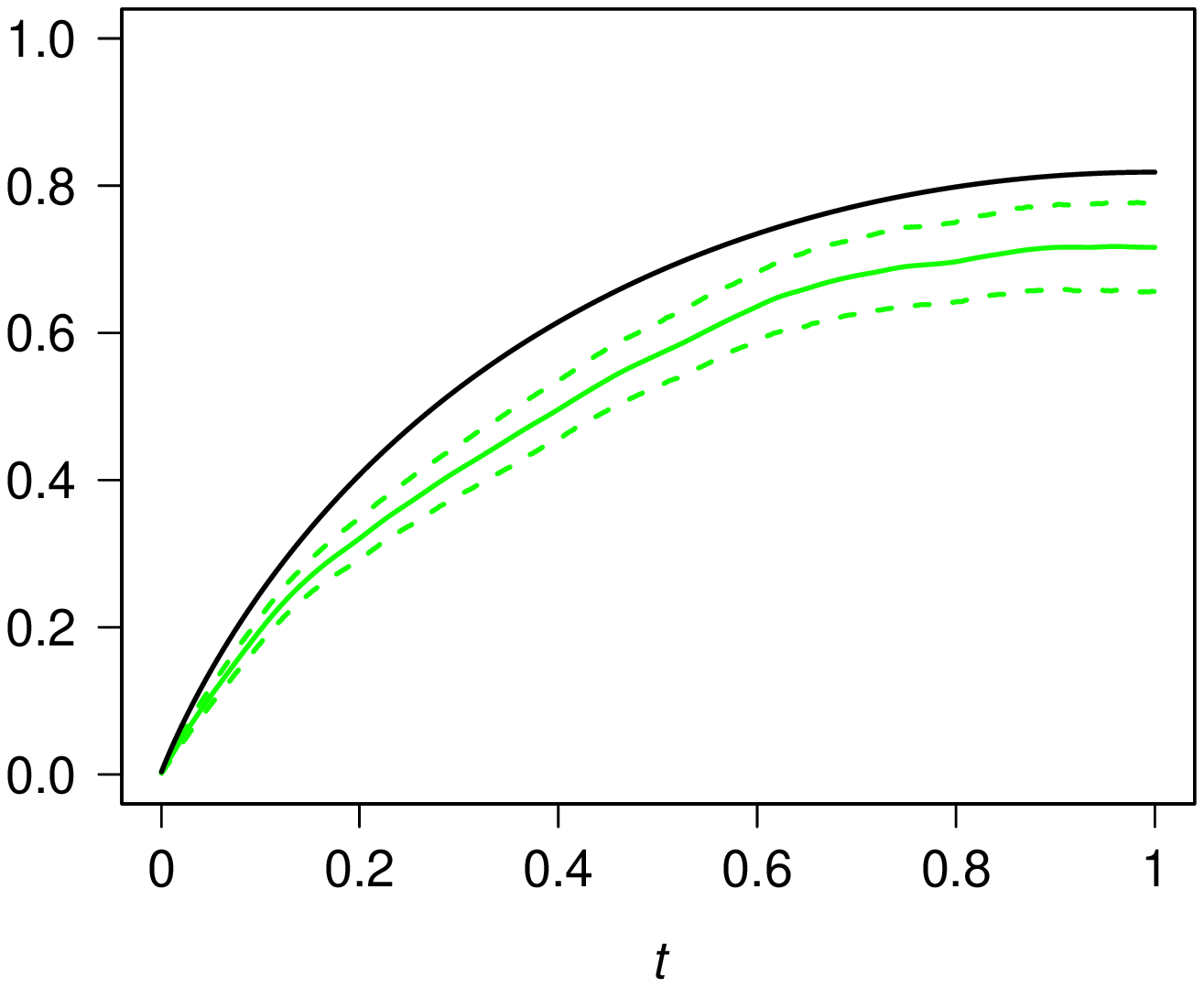}}
		\label{fig:PI:3rd}
	}
	\end{minipage}
	\begin{minipage}{0.4\hsize}
		{\centering
			\subfigure[The Bayes and the plugin with WGBStein in the fourth setting]{\includegraphics[width=4cm]{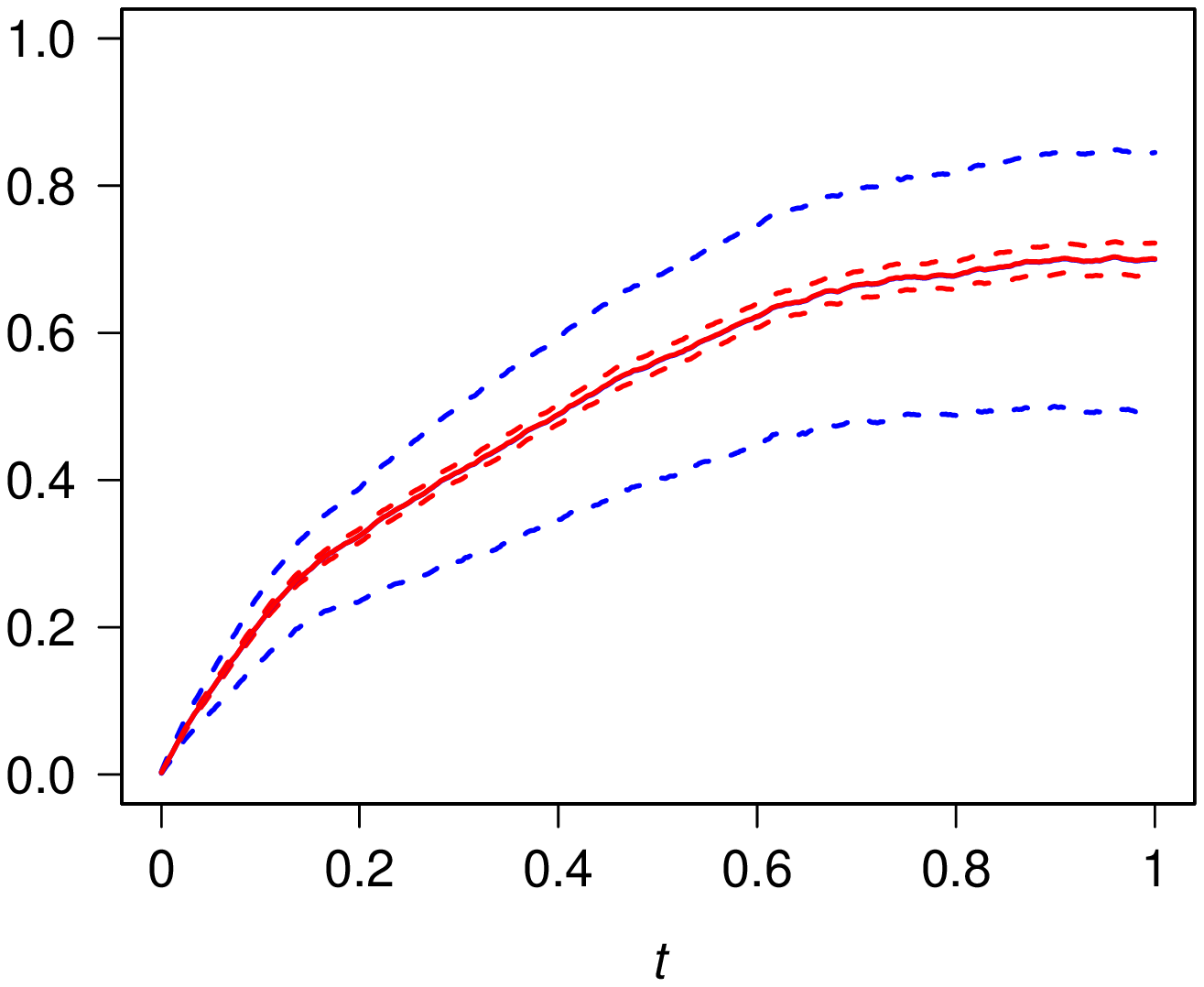}}
			\label{fig:PI:4th}
		}
		\end{minipage}
\caption{
The true function corrsponding to $\theta$ (solid black line),
the mean paths (solid lines) and the pointwise $80\%$ predictive intervals (dashed lines) of the predictive distributions.
}
\label{fig:PI}
\end{figure}

Using the function representation of the infinite sequence model discussed in Section \ref{Sec:Equivalence},
we examine the behavior of the predictive distributions in the second and fifth settings.
Figure \ref{fig:PI} shows the mean path and the predictive intervals of predictive distributions at $t\in\{i/1000\}_{i=1}^{1000}$
and the values of the true function at $t\in\{i/1000\}_{i=1}^{1000}$.
Figure \ref{fig:PI} (a), Figure \ref{fig:PI} (b), Figure \ref{fig:PI} (c), and Figure \ref{fig:PI} (d)
represent 
the Pinsker-type predictive distribution and the true function in the second setting,
the blockwise Stein predictive distribution and the plugin predictive distribution in the second setting,
the Pinsker-type predictive distribution and the true function in the fourth setting,
and
the blockwise Stein predictive distribution and the plugin predictive distribution in the fourth setting,
respectively.
The solid line represents the true function and the mean paths.
The dashed line represents the pointwise $80\%$ predictive intervals.
The black, green, blue, and red lines correspond to
the true function, the Pinsker-type predictive distribution, the blockwise Stein predictive distribution,
and
the plugin predictive distribution,
respectively.

The mean paths of the blockwise Stein predictive distribution and the plugin predictive distributions
are more distant from the true function than that of the Pinsker-type predictive distribution,
corresponding to the results in Table \ref{Table:MISE}.
The predictive intervals of the blockwise Stein predictive distribution are wider
than these of the other predictive distributions,
corresponding to the results in Table \ref{Table:APC}.
Though the blockwise Stein predictive distribution has a mean path that is more distant from the true function than the Pinsker-type predictive distribution,
it has a wider predictive interval and captures future observations.
In contrast,
although
the plugin predictive distribution has nearly the same mean path as the blockwise Stein predictive distribution does,
it has a narrow predictive interval and does not capture future observations.

\section{Discussions and Conclusions}\label{Sec:Conclusion}

In the paper,
we have considered asymptotically minimax Bayesian predictive distributions in an infinite sequence model.
First,
we have provided the connection between prediction in a function model and prediction in an infinite sequence model.
Second,
we have constructed an asymptotically minimax Bayesian predictive distribution
for the setting in which the parameter space is a known ellipsoid.
Third,
using the product of Stein's priors based on the division of the parameter into blocks,
we have constructed an asymptotically minimax adaptive Bayesian predictive distribution in the family of Sobolev ellipsoids.

We established the fundamental results of prediction in the infinite-dimensional model using the asymptotics as $\varepsilon\to 0$.
The approach was motivated by \citet{XuandLiang(2010)}.
Since it is not always appropriate to use asymptotics in applications,
the next step is to provide the result for a fixed $\varepsilon$.

We discussed the asymptotic minimaxity and the adaptivity for the ellipsoidal parameter space.
There are many other types of parameter space in high-dimensional and nonparametric models;
for example, \citet{MukherjeeandJohnstone(2015)} discussed the asymptotically minimax prediction in high-dimensional Gaussian sequence model under sparsity.
For future work,
we should focus on the asymptotically minimax adaptive predictive distributions in other parameter spaces.

\section{Acknowledgements}
The authors thank the Editor, an associate editor, reviewers for their careful reading and constructive suggestions on the manuscript.
This work is supported by JSPS KAKENHI Grand number 26280005.

\appendix
\section{Proofs of Lemmas in Section 2}\label{Appendix:ProofofSection2}

\begin{proof}[Proof of Lemma \ref{GaussianMeasures}]
The proof is similar to that of Lemmas 5.1 and 6.1 in \citet{BelitserandGhosal(2003)}.
We denote the expectation of $X$ and $Y$ with respect to $P_{\theta}$ and $Q_{\theta}$ by $\mathrm{E}_{X,Y|\theta}$.

First,
we show that $Q_{\mathrm{G}_{\tau}}$ and $Q_{\theta}$ are mutually absolutely continuous given $X=x$ $P_{\theta}$-a.s.~if $\theta\in l_{2}$ and $\tau\in l_{2}$.
From Kakutani's theorem (pp.~150--151 in \citet{Williams(1991)}),
$Q_{\mathrm{G}_{\tau}}$ and $Q_{\theta}$ are mutually absolutely continuous given $X=x$ $P_{\theta}$-a.s.~if and only if
\begin{eqnarray}
	0 < \mathop{\mathlarger{\prod}}_{i=1}^{\infty} \mathlarger{\int}
	\sqrt{\frac{\mathrm{d}\mathcal{N}\left(\frac{1/\varepsilon^{2}}{1/\varepsilon^{2}+1/\tau^{2}_{i}}x_{i},
\frac{1}{1/\varepsilon^{2}+1/\tau^{2}_{i}}+\tilde{\varepsilon}^{2}
\right)}{\mathrm{d}y_{i}}\frac{\mathrm{d}\mathcal{N}\left(\theta_{i},\tilde{\varepsilon}^{2}\right)}{\mathrm{d}y_{i}}}
\mathrm{d}y_{i} \text{ $P_{\theta}$-a.s.}.
\label{Affinity}
\end{eqnarray}
Since the right hand side of (\ref{Affinity})
is the limit of the product of
\begin{eqnarray}
	\mathop{\mathlarger{\prod}}_{i=1}^{p} 
	\left\{1-\frac{(1/(1/\varepsilon^{2} +1/\tau_{i}^{2}))^{2}}{2\tilde{\varepsilon}^{2}+(1/(1/\varepsilon^{2} +1/\tau_{i}^{2}))}\right\}^{1/2}
\label{diverge_1}
\end{eqnarray}
and
\begin{eqnarray}
	\mathop{\mathlarger{\prod}}_{i=1}^{p} 
\mathrm{exp}\left\{-\frac{\left(\theta_{i}-\frac{1/\varepsilon^{2}}{1/\varepsilon^{2}+1/\tau_{i}^{2}}x_{i}\right)^{2}}{4\left(2\tilde{\varepsilon}^{2}+\frac{1}{1/\varepsilon^{2}+1/\tau_{i}^{2}}\right)}\right\}
\label{diverge_2}
\end{eqnarray}
as $p\to\infty$,
it suffices to show that both (\ref{diverge_1}) and (\ref{diverge_2}) converge to non-zero constants.
Quantity (\ref{diverge_1}) converges to a non-zero constant 
because
the product $\prod_{i=1}(1-a_{i})$ with $\{a_{i}>0\}_{i=1}^{\infty}$ converges to a non-zero constant
provided that $\sum_{i=1}a_{i}$ converges,
and because
\begin{eqnarray*}
	\mathlarger{\sum}_{i=1}^{\infty}\frac{(1/(1/\varepsilon^{2} +1/\tau_{i}^{2}))^{2}}{2\tilde{\varepsilon}^{2}+(1/(1/\varepsilon^{2} +1/\tau_{i}^{2}))}
	\leq \sum_{i=1}^{\infty} \frac{1}{1/\varepsilon^{2}+1/\tau_{i}^{2}}
	\leq \sum_{i=1}^{\infty} \tau_{i}^{2}<\infty.
\end{eqnarray*}
Quantity (\ref{diverge_2}) converges to a non-zero constant
because
\begin{align*}
\frac{1}{4}\sum_{i=1}^{\infty}
	\left\{	\frac{\left(\theta_{i}-\frac{1/\varepsilon^{2}}{1/\varepsilon^{2}+1/\tau_{i}^{2}}x_{i}\right)^{2}}{(2\tilde{\varepsilon}^{2}+1/(1/\varepsilon^{2}+1/\tau_{i}^{2}))} \right\}
	&\leq
	\frac{1}{8\tilde{\varepsilon}^{2}}\sum_{i=1}^{\infty}\theta_{i}^{2}
	+
	\frac{1}{8\tilde{\varepsilon}^{2}\varepsilon^{2}}
	\sum_{i=1}^{\infty}\tau_{i}^{2}(x_{i}-\theta_{i})^{2}
	\nonumber\\
	&+
	\frac{1}{4\tilde{\varepsilon}^{2}\varepsilon^{2}}
	\left(\sum_{i=1}^{\infty}\theta_{i}^{2}\right)^{1/2}
	\left(\sum_{i=1}^{\infty}\tau_{i}^{2}(x_{i}-\theta_{i})^{2}\right)^{1/2}.
\end{align*}
Thus, $Q_{\mathrm{G}_{\tau}}$ and $Q_{\theta}$ are mutually absolutely continuous given $X=x$ $P_{\theta}$-a.s..

\if(0)
For the convergence of (\ref{diverge_2}),
consider the following decomposition of the minus log of (\ref{diverge_2}):
\begin{eqnarray}
	\frac{1}{4}\sum_{i=1}^{\infty}
	\left\{	\frac{\left(\theta_{i}-\frac{1/\varepsilon^{2}}{1/\varepsilon^{2}+1/\tau_{i}^{2}}x_{i}\right)^{2}}{(2\tilde{\varepsilon}^{2}+1/(1/\varepsilon^{2}+1/\tau_{i}^{2}))} \right\}
	&=&
	\frac{1}{4}\sum_{i=1}^{\infty}\frac{\left(\frac{1/\tau_{i}^{2}}{1/\varepsilon^{2}+1/\tau_{i}^{2}}\right)^{2}\theta_{i}^{2}}{2\tilde{\varepsilon}^{2}+1/(1/\varepsilon^{2}+1/\tau_{i}^{2})}
	+
	\frac{1}{4}\sum_{i=1}^{\infty}\frac{\left(\frac{1/\varepsilon^{2}}{1/\varepsilon^{2}+1/\tau_{i}^{2}}\right)^{2}(x_{i}-\theta_{i})^{2}}{2\tilde{\varepsilon}^{2}+1/(1/\varepsilon^{2}+1/\tau_{i}^{2})}
	\nonumber\\
	&&\quad
	-\frac{1}{2}\sum_{i=1}^{\infty}\frac{\frac{1/\tau_{i}^{2}}{1/\varepsilon^{2}+1/\tau_{i}^{2}}\frac{1/\varepsilon^{2}}{1/\varepsilon^{2}+1/\tau_{i}^{2}}\theta_{i}(x_{i}-\theta_{i})}{2\tilde{\varepsilon}^{2}+1/(1/\varepsilon^{2}+1/\tau_{i}^{2})}.
	\label{decomp_diverge_2}
\end{eqnarray}
Note that 
\begin{eqnarray}
	1/(b+c)<\min\{1/b,1/c\} \text{ for positive $b$ and positive $c$}
	\label{posiposi}
\end{eqnarray}
and
\begin{eqnarray}
	b/(b+c)<1 \text{ for positive $b$ and positive $c$}.
	\label{posione}
\end{eqnarray}
From inequality (\ref{posiposi}),
the first term of the right hand side in (\ref{decomp_diverge_2}) is bounded 
above by $(1/8\tilde{\varepsilon}^{2})\sum_{i}\theta_{i}^{2}$.
From inequalities (\ref{posiposi}) and (\ref{posione}),
the second term of the right hand side in (\ref{decomp_diverge_2}) is bounded above by 
$(1/8\tilde{\varepsilon}^{2}\varepsilon^{2})\sum_{i=1}^{\infty}\tau_{i}^{2}(x_{i}-\theta_{i})^{2}$.
From the Cauchy-Schwartz inequality and 
from the inequalities that for $i\in\mathbb{N}$
\begin{eqnarray*}
	\frac{\left(\frac{1/\varepsilon^{2}}{1/\varepsilon^{2}+1/\tau_{i}^{2}}\right)^{2}}{2\tilde{\varepsilon}^{2}+1/(1/\varepsilon^{2}+1/\tau_{i}^{2})}
	\leq \frac{1}{2\tilde{\varepsilon}^{2}}
	&\text{ and }&
	\frac{\left(\frac{1/\varepsilon^{2}}{1/\varepsilon^{2}+1/\tau_{i}^{2}}\right)^{2}}{2\tilde{\varepsilon}^{2}+1/(1/\varepsilon^{2}+1/\tau_{i}^{2})}
	\leq \frac{\tau_{i}^{2}}{2\tilde{\varepsilon}^{2}\varepsilon^{2}},
\end{eqnarray*}
the third term of the right hand side in (\ref{decomp_diverge_2}) is bounded above as follows:
\begin{align*}
	-&\frac{1}{2}\mathlarger{\sum}_{i=1}^{\infty}\frac{\frac{1/\tau_{i}^{2}}{1/\varepsilon^{2}+1/\tau_{i}^{2}}\frac{1/\varepsilon^{2}}{1/\varepsilon^{2}+1/\tau_{i}^{2}}\theta_{i}(x_{i}-\theta_{i})}{2\tilde{\varepsilon}^{2}+1/(1/\varepsilon^{2}+1/\tau_{i}^{2})}
	\nonumber\\
	&\leq
	\frac{1}{2}\left(\mathlarger{\sum}_{i=1}^{\infty}\frac{\left(\frac{1/\tau_{i}^{2}}{1/\varepsilon^{2}+1/\tau_{i}^{2}}\right)^{2}\theta_{i}^{2}}{2\tilde{\varepsilon}^{2}+1/(1/\varepsilon^{2}+1/\tau_{i}^{2})}\right)^{1/2}
	\left(\mathlarger{\sum}_{i=1}^{\infty}\frac{\left(\frac{1/\varepsilon^{2}}{1/\varepsilon^{2}+1/\tau_{i}^{2}}\right)^{2}(x_{i}-\theta_{i})^{2}}{2\tilde{\varepsilon}^{2}+1/(1/\varepsilon^{2}+1/\tau_{i}^{2})} \right)^{1/2}
	\nonumber\\
	&\leq
	\frac{1}{4}\frac{1}{\tilde{\varepsilon}^{2}\varepsilon}\left(\sum_{i=1}^{\infty}\theta_{i}^{2}\right)^{1/2}\left(\sum_{i=1}^{\infty}\tau_{i}^{2}(x_{i}-\theta_{i})^{2}\right)^{1/2}.
\end{align*}
Since $\theta\in l_{2}$, the first term in (\ref{decomp_diverge_2}) is finite.
To show that the second and the third terms are finite $P_{\theta}$-a.s.,
let $M_{n}:=\sum_{i=1}^{n}[ \tau_{i}^{2}(X_{i}-\theta_{i})^{2}-\tau_{i}^{2}]$.
Here, $M_{n}$ is a zero-mean martingale 
and since $\tau\in l_{2}$,
$M_{n}$ satisfies
$\mathop{\sup}_{n}\mathrm{E}_{X,Y|\theta}[|M_{n}|]\leq 2\sum_{i=1}^{\infty}\tau_{i}^{2}<\infty$.
From the martingale convergence theorem,
$M_{\infty}:=\lim M_{n}=\sum_{i=1}^{\infty}[ \tau_{i}^{2}(X_{i}-\theta_{i})^{2}-\tau_{i}^{2}]$ 
exists and is finite $P_{\theta}$-a.s..
Then,
the second term and the third term in (\ref{decomp_diverge_2}) are finite $P_{\theta}$-a.s..
Thus, $Q_{G_{\tau}}$ and $Q_{\theta}$ are mutually absolutely continuous given $X=x$ $P_{\theta}$-a.s..
\fi

Second,
we show that $R(\theta,Q_{\mathrm{G}_{\tau}})$ is given by (\ref{KL_Bayes_productNormal}).
Let $\pi^{(d)}:\mathbb{R}^{\infty}\to\mathbb{R}^{d}$ be the finite dimensional projection $\pi^{(d)}(x)=(x_{1},\ldots,x_{d})$.
Let $Q^{(d)}_{\theta^{(d)}}$ and $Q^{(d)}_{\mathrm{G}_{\tau}}$ be the induced probability measures of $Q_{\theta}$ and $Q_{\mathrm{G}_{\tau}}$
by the finite-dimensional projection $\pi^{(d)}$, respectively.
From Kakutani's theorem, we have
\begin{eqnarray*}
	R(\theta,Q_{\mathrm{G}_{\tau}})=\mathrm{E}_{X,Y|\theta}[\lim \log(\mathrm{d}Q^{(d)}_{\theta^{(d)}}/\mathrm{d}Q^{(d)}_{\mathrm{G}_{\tau}})].
\end{eqnarray*}
Then,
it suffices to show that the almost sure convergence in the right hand side
is also the convergence in the expectation.
Here,
\begin{eqnarray*}
	\log(\mathrm{d}Q^{(d)}_{\theta^{(d)}}/\mathrm{d}Q^{(d)}_{\mathrm{G}_{\tau}})=S_{d}
+\mathop{\sum}_{i=1}^{d}
\left[
\frac{1}{2}\frac{v^{2}_{\varepsilon,\tilde{\varepsilon}}+\theta^{2}_{i}}{v^{2}_{\varepsilon,\tilde{\varepsilon}}+\tau^{2}_{i}}
-\frac{1}{2}\frac{v^{2}_{\varepsilon}+\theta^{2}_{i}}{v^{2}_{\varepsilon}+\tau^{2}_{i}}
+\frac{1}{2}\log\left(\frac{1+\tau_{i}^{2}/v^{2}_{\varepsilon,\tilde{\varepsilon}}}{1+\tau_{i}^{2}/v^{2}_{\varepsilon}}\right)
\right],
\end{eqnarray*}
where $S_{d}$ is
\begin{eqnarray*}
S_{d}:=\mathop{\sum}_{i=1}^{d}\left[
-\frac{(Y_{i}-\theta_{i})^{2}}{2\tilde{\varepsilon}^{2}}+\frac{\left(Y_{i}-\frac{1/\varepsilon^{2}}{1/\varepsilon^{2}+1/\tau_{i}^{2}}X_{i}\right)^{2}}{2(\tilde{\varepsilon}^{2}+1/(1/\varepsilon^{2}+1/\tau_{i}^{2}))}
-\frac{1}{2}\frac{v^{2}_{\varepsilon,\tilde{\varepsilon}}+\theta^{2}_{i}}{v^{2}_{\varepsilon,\tilde{\varepsilon}}+\tau^{2}_{i}}
	+\frac{1}{2}\frac{v^{2}_{\varepsilon}+\theta^{2}_{i}}{v^{2}_{\varepsilon}+\tau^{2}_{i}}
\right].
\end{eqnarray*}
Here $S_{d}$ is a zero-mean martingale such that $\mathop{\sup}_{d}\mathrm{E}_{X,Y|\theta}[S_{d}^{2}]<\infty$;
From the martingale convergence theorem (p. 111 in \citet{Williams(1991)}), 
$S_{d}$ converges to $S_{\infty}:=\lim S_{d}$, $P_{\theta}$-a.s.~and $\mathrm{E}_{X,Y|\theta}[S_{d}-S_{\infty}]^{2}\to 0$.
Since $\mathrm{E}_{X,Y|\theta}[S_{d}]\to\mathrm{E}_{X,Y|\theta}[S_{\infty}]=0$,
equality (\ref{KL_Bayes_productNormal}) follows.
\end{proof}

\if(0)
\begin{proof}[Proof of the properties of $S_{n}$ in Lemma 2.1]
It suffices to show that 
$\mathrm{E}_{X,Y|\theta}[S_{n}-S_{n-1}]=0$ for $n\in\mathbb{N}$
and
$\sum_{n=1}^{\infty}\mathrm{E}_{X,Y|\theta}[S_{n}-S_{n-1}]^{2}<\infty$.
For every $n\in\mathbb{N}$,
$S_{n}-S_{n-1}$ is given by
\begin{align*}
S_{n}-S_{n-1}
=&
-\frac{1}{2\tilde{\varepsilon}^{4}}
\frac{\tau_{n}^{2}v^{2}_{\varepsilon,\tilde{\varepsilon}}}{\tau_{n}^{2}+v^{2}_{\varepsilon,\tilde{\varepsilon}}}
(Y_{n}-\theta_{n})^{2}
+\frac{1}{2\tilde{\varepsilon}^{2}}
\frac{v^{2}_{\varepsilon,\tilde{\varepsilon}}}{v^{2}_{\varepsilon}}
\frac{\tau_{n}^{2}+v^{2}_{\varepsilon}}{\tau_{n}^{2}+v^{2}_{\varepsilon,\tilde{\varepsilon}}}
\left(
\frac{\tau_{n}^{2}}{\tau_{n}^{2}+v^{2}_{\varepsilon}}
\right)^{2}
(X_{n}-\theta_{n})^{2}
\nonumber\\
&+
\frac{1}{\tilde{\varepsilon}^{2}}
\frac{v^{2}_{\varepsilon,\tilde{\varepsilon}}}{\tau_{n}^{2}+v^{2}_{\varepsilon,\tilde{\varepsilon}}}
\theta_{n}(Y_{n}-\theta_{n})
-
\frac{1}{\tilde{\varepsilon}^{2}}
\frac{v^{2}_{\varepsilon,\tilde{\varepsilon}}}{\tau_{n}^{2}+v^{2}_{\varepsilon,\tilde{\varepsilon}}}
\frac{\tau_{n}^{2}}{\tau_{n}^{2}+v^{2}_{\varepsilon}}\theta_{n}(X_{n}-\theta_{n})
\nonumber\\
&-
\frac{1}{\tilde{\varepsilon}^{2}\varepsilon^{2}}
\frac{v^{2}_{\varepsilon,\tilde{\varepsilon}}}{\tau_{n}^{2}+v^{2}_{\varepsilon,\tilde{\varepsilon}}}
\tau_{n}^{2}(X_{n}-\theta_{n})(Y_{n}-\theta_{n})
+
\frac{1}{2}\left(
	\frac{1}{v^{2}_{\varepsilon,\tilde{\varepsilon}}+\tau_{n}^{2}}
	-
	\frac{1}{v^{2}_{\varepsilon}+\tau_{n}^{2}}
\right)\tau_{n}^{2}.
\end{align*}
Then, $\mathrm{E}_{X,Y|\theta}[S_{n}-S_{n-1}]=0$.
Since $X_{n}-\theta_{n}$ and $Y_{n}-\theta_{n}$ are independent
and $X_{n}-\theta$ and $Y_{n}-\theta_{n}$ are distributed according to $\mathcal{N}(0,\varepsilon^{2})$
and $\mathcal{N}(0,\tilde{\varepsilon}^{2})$,respectively,
\begin{align*}
	\mathrm{E}_{X,Y|\theta}&[S_{n}-S_{n-1}]^{2}
	\nonumber\\
	=&
	\frac{3}{4}\frac{v^{4}_{\varepsilon,\tilde{\varepsilon}}}{\tilde{\varepsilon}^{4}}
	\left(\frac{\tau_{n}^{2}}{\tau_{n}^{2}+v^{2}_{\varepsilon,\tilde{\varepsilon}}}\right)^{2}
	+
	\frac{3}{4}\frac{\varepsilon^{4}}{\tilde{\varepsilon}^{4}}
	\left(\frac{v^{2}_{\varepsilon,\tilde{\varepsilon}}}{v^{2}_{\varepsilon}}\right)^{2}
	\left(\frac{\tau_{n}^{2}+v^{2}_{\varepsilon}}{\tau_{n}^{2}+v^{2}_{\varepsilon,\tilde{\varepsilon}}}
	\right)^{2}
	\left(
		\frac{\tau_{n}^{2}}{\tau_{n}^{2}+v^{2}_{\varepsilon}}
	\right)^{4}
	\nonumber\\
	&+
	\frac{1}{\tilde{\varepsilon}^{2}}
	\left(\frac{v^{2}_{\varepsilon,\tilde{\varepsilon}}}{\tau_{n}^{2}+v^{2}_{\varepsilon,\tilde{\varepsilon}}}\right)^{2}
	\theta_{n}^{2}
	+
	\frac{\varepsilon^{2}}{\tilde{\varepsilon}^{4}}
	\left(
		\frac{v^{2}_{\varepsilon,\tilde{\varepsilon}}}{\tau_{n}^{2}+v^{2}_{\varepsilon,\tilde{\varepsilon}}}
	\right)^{2}
	\left(
		\frac{\tau_{n}^{2}}{\tau_{n}^{2}+v^{2}_{\varepsilon}}
	\right)^{2}
	\theta_{n}^{2}
	\nonumber\\
	&+
	\frac{1}{\varepsilon^{2}\tilde{\varepsilon}^{2}}
	\left(
		\frac{v^{2}_{\varepsilon,\tilde{\varepsilon}}\tau_{n}^{2}}{\tau_{n}^{2}+v^{2}_{\varepsilon,\tilde{\varepsilon}}}
	\right)^{2}
	+
	\frac{1}{4}
	\left(\frac{v^{2}_{\varepsilon}-v^{2}_{\varepsilon,\tilde{\varepsilon}}}{(\tau_{n}^{2}+v^{2}_{\varepsilon})(\tau_{n}^{2}+v^{2}_{\varepsilon,\tilde{\varepsilon}})}\right)^{2}
	\tau_{n}^{4}
	\nonumber\\
	&+
	\frac{\varepsilon^{2}}{2\tilde{\varepsilon}^{2}}
	\frac{v^{2}_{\varepsilon}-v^{2}_{\varepsilon,\tilde{\varepsilon}}}{(\tau_{n}^{2}+v^{2}_{\varepsilon})(\tau_{n}^{2}+v^{2}_{\varepsilon,\tilde{\varepsilon}})}
	\frac{v^{2}_{\varepsilon,\tilde{\varepsilon}}}{v^{2}_{\varepsilon}}
	\frac{\tau_{n}^{2}+v^{2}_{\varepsilon}}{\tau_{n}^{2}+v^{2}_{\varepsilon,\tilde{\varepsilon}}}
	\left(
		\frac{\tau_{n}^{2}}{\tau_{n}^{2}+v^{2}_{\varepsilon}}
	\right)^{2}
	\tau_{n}^{2}
	+NT,
\end{align*}
where $NT$ is the negative terms.
By omitting the negative terms and 
from inequalities (\ref{posiposi}) and (\ref{posione}),
\begin{eqnarray*}
\mathrm{E}_{X,Y|\theta}[S_{n}-S_{n-1}]^{2}
&\leq&
\left[\frac{3}{4}\frac{v^{2}_{\varepsilon,\tilde{\varepsilon}}}{\tilde{\varepsilon}^{4}}
	+\frac{3}{4}\frac{v^{2}_{\varepsilon,\tilde{\varepsilon}}}{\tilde{\varepsilon}^{4}}
	+\frac{v^{2}_{\varepsilon,\tilde{\varepsilon}}}{\tilde{\varepsilon}^{2}\varepsilon^{2}}
	+\frac{1}{4}\frac{(v^{2}_{\varepsilon}-v^{2}_{\varepsilon,\tilde{\varepsilon}})^{2}}{\varepsilon^{4}v^{2}_{\varepsilon,\tilde{\varepsilon}}}
	+\frac{1}{2}\frac{(v^{2}_{\varepsilon}-v^{2}_{\varepsilon,\tilde{\varepsilon}})}{\tilde{\varepsilon}^{2}v^{2}_{\varepsilon,\tilde{\varepsilon}}}
\right]\tau_{n}^{2}
+\left[\frac{1}{\tilde{\varepsilon}^{2}}+\frac{\varepsilon^{2}}{\tilde{\varepsilon}^{4}}\right]\theta_{n}^{2}.
\end{eqnarray*}
Thus, by combining the above inequality with the assumption that $\theta\in l_{2}$ and $\tau\in l_{2}$,
we obtain $\sum_{n=1}^{\infty}\mathrm{E}_{X,Y|\theta}[S_{n}-S_{n-1}]^{2}<\infty$.
\end{proof}
\fi

\vspace{4mm}

\begin{proof}[Proof of Lemma \ref{LinearMinimax}]

First,
the finiteness of $T(\varepsilon,\tilde{\varepsilon})$ is derived from the assumption for $a=(a_{1},a_{2},\ldots)$.
$\lambda(\varepsilon,\tilde{\varepsilon})$ is uniquely determined
because
$\sum_{i=1}^{\infty}a_{i}^{2}\left(\tau^{*}_{i}(\varepsilon,\tilde{\varepsilon})\right)^{2}$
is continuous and strictly decreasing with respect to $\lambda$.
Note that $\tau^{*}(\varepsilon,\tilde{\varepsilon})$ is also a function with respect to $\lambda(\varepsilon,\tilde{\varepsilon})$.

Second, 
we show that
\begin{align*}
\mathop{\sup}_{\theta\in\Theta(a,B)}R(\theta,Q_{\mathrm{G}_{\tau=\tau^{*}(\varepsilon,\tilde{\varepsilon})}})
\geq\mathop{\sup}_{\theta\in\Theta(a,B)}R(\theta,Q_{\mathrm{G}_{\tau=\theta}}).
\end{align*}
Since the function 
\begin{eqnarray*}
	r_{i}(z):=\frac{1}{2}\log\left(\frac{1+z^{2}/v^{2}_{\varepsilon,\tilde{\varepsilon}}}{1+z^{2}/v^{2}_{\varepsilon}}\right)
	+\frac{1}{2}\frac{v^{2}_{\varepsilon,\tilde{\varepsilon}}+\theta_{i}^{2}}{v^{2}_{\varepsilon,\tilde{\varepsilon}}+z^{2}}
	-\frac{1}{2}\frac{v^{2}_{\varepsilon}+\theta_{i}^{2}}{v^{2}_{\varepsilon}+z^{2}}
\end{eqnarray*}
has a minimum at $z=\theta_{i}$,
for $\theta\in l_{2}$,
\begin{eqnarray}
	\mathop{\inf}_{\tau\in l_{2}} R(\theta,Q_{\mathrm{G}_{\tau}})
&=&
\left[
\mathop{\sum}_{i=1}^{\infty}
\frac{1}{2}\log\left(\frac{1+\theta^{2}_{i}/v^{2}_{\varepsilon,\tilde{\varepsilon}}}{1+\theta^{2}_{i}/v^{2}_{\varepsilon}}\right)
\right].
\label{Min_tau}
\end{eqnarray}
Since the minimax risk is greater than the maximin risk,
\begin{eqnarray}
	\mathop{\sup}_{\theta\in\Theta(a,B)}R(\theta,Q_{\mathrm{G}_{\tau=\tau^{*}(\varepsilon,\tilde{\varepsilon})}})
&\geq&
	\mathop{\inf}_{\tau\in l_{2}}\mathop{\sup}_{\theta\in\Theta(a,B)}R(\theta,Q_{\mathrm{G}_{\tau}})
\nonumber\\
&\geq&
\mathop{\sup}_{\theta\in\Theta(a,B)}\mathop{\inf}_{\tau\in l_{2}}R(\theta,Q_{\mathrm{G}_{\tau}})
\nonumber\\
&=&
\mathop{\sup}_{\theta\in\Theta(a,B)}R(\theta,Q_{\mathrm{G}_{\tau=\theta}}).
\label{minimaxmaximineq}
\end{eqnarray}

Finally,
we show 
that
\begin{align*}
	\mathop{\sup}_{\theta\in\Theta(a,B)}R(\theta,Q_{\mathrm{G}_{\tau=\theta}})
	\geq\mathop{\sup}_{\theta\in\Theta(a,B)}R(\theta,Q_{\mathrm{G}_{\tau=\tau^{*}(\varepsilon,\tilde{\varepsilon})}}).
\end{align*}
Substituting $\tau=\tau^{*}(\varepsilon,\tilde{\varepsilon})$ 
into (\ref{KL_Bayes_productNormal})
for any $\theta\in\Theta(a,B)$
yields
\begin{align*}
&\mathop{\sum}_{i=1}^{T(\varepsilon,\tilde{\varepsilon})}
	\left\{
	\frac{1}{2}\log\left(
		\frac{1+(\tau^{*}_{i}(\varepsilon,\tilde{\varepsilon}))^{2}/v^{2}_{\varepsilon,\tilde{\varepsilon}}}
		{1+(\tau^{*}_{i}(\varepsilon,\tilde{\varepsilon})^{2}/v^{2}_{\varepsilon})}
\right)\right\}
-R(\theta,Q_{\mathrm{G}_{\tau=\tau^{*}(\varepsilon,\tilde{\varepsilon})}})
\nonumber\\
&=\frac{1}{2}\mathop{\sum}_{i=1}^{\infty}
\left\{
\frac{v^{2}_{\varepsilon}+\theta^{2}_{i}}{v^{2}_{\varepsilon}+\left(\tau^{*}_{i}(\varepsilon,\tilde{\varepsilon})\right)^{2}}
-\frac{v^{2}_{\varepsilon,\tilde{\varepsilon}}+\theta^{2}_{i}}{v^{2}_{\varepsilon,\tilde{\varepsilon}}+\left(\tau^{*}_{i}(\varepsilon,\tilde{\varepsilon})\right)^{2}}
\right\}
\nonumber\\
&=\frac{1}{2}\mathop{\sum}_{i=1}^{\infty}
\left\{
\frac{1}{v^{2}_{\varepsilon}+\left(\tau^{*}_{i}(\varepsilon,\tilde{\varepsilon})\right)^{2}}
-\frac{1}{v^{2}_{\varepsilon,\tilde{\varepsilon}}+\left(\tau^{*}_{i}(\varepsilon,\tilde{\varepsilon})\right)^{2}}\right\}
\left(\theta^{2}_{i}-\left(\tau^{*}_{i}(\varepsilon,\tilde{\varepsilon})\right)^{2}\right)
\nonumber\\
&=\frac{1}{2}\mathop{\sum}_{i=1}^{\infty}
\frac{v^{2}_{\varepsilon,\tilde{\varepsilon}}-v^{2}_{\varepsilon}}{(v^{2}_{\varepsilon}+\left(\tau^{*}_{i}(\varepsilon,\tilde{\varepsilon})\right)^{2})(v^{2}_{\varepsilon,\tilde{\varepsilon}}+\left(\tau^{*}_{i}(\varepsilon,\tilde{\varepsilon})\right)^{2})}
(\theta^{2}_{i}-\left(\tau^{*}_{i}(\varepsilon,\tilde{\varepsilon})\right)^{2})
\nonumber\\
&=\mathop{\sum}_{i=1}^{\infty}\frac{1}{2}\{2\lambda(\varepsilon,\tilde{\varepsilon})a_{i}^{2}\}
\left\{\left(\tau^{*}_{i}(\varepsilon,\tilde{\varepsilon})\right)-\theta_{i}^{2}\right\}
\geq 0
\end{align*}
with equality if $\theta=\tau^{*}(\varepsilon,\tilde{\varepsilon})$.
Thus,
\begin{eqnarray*}
	\mathop{\sum}_{i=1}^{T(\varepsilon,\tilde{\varepsilon})}
	\left\{\frac{1}{2}\log\left(
			\frac{1+(\tau^{*}_{i}(\varepsilon,\tilde{\varepsilon}))^{2}/v^{2}_{\varepsilon,\tilde{\varepsilon}}}
			{1+(\tau^{*}_{i}(\varepsilon,\tilde{\varepsilon}))^{2}/v^{2}_{\varepsilon}}
	\right)\right\}
	=\mathop{\sup}_{\theta\in\Theta(a,B)}R(\theta,Q_{\mathrm{G}_{\tau=\tau^{*}(\varepsilon,\tilde{\varepsilon})}}).
\end{eqnarray*}
Since
\begin{eqnarray*}
	\mathop{\sup}_{\theta\in\Theta(a,B)}R(\theta,Q_{\mathrm{G}_{\tau=\theta}})
&\geq&
	R(\tau^{*}(\varepsilon,\tilde{\varepsilon}),Q_{\mathrm{G}_{\tau=\tau^{*}(\varepsilon,\tilde{\varepsilon})}})
\nonumber\\
&=&
\mathop{\sum}_{i=1}^{T(\varepsilon,\tilde{\varepsilon})}\left\{\frac{1}{2}
\log\left(
	\frac{1+(\tau^{*}_{i}(\varepsilon,\tilde{\varepsilon}))^{2}/v^{2}_{\varepsilon,\tilde{\varepsilon}}}
	{1+(\tau^{*}_{i}(\varepsilon,\tilde{\varepsilon}))^{2}/v^{2}_{\varepsilon}}
\right)\right\},
\end{eqnarray*}
it follows that
\begin{align*}
	\mathop{\sup}_{\theta\in\Theta(a,B)}R(\theta,Q_{\mathrm{G}_{\tau=\theta}})
	\geq\mathop{\sup}_{\theta\in\Theta(a,B)}R(\theta,Q_{\mathrm{G}_{\tau=\tau^{*}(\varepsilon,\tilde{\varepsilon})}}).
\end{align*}
\end{proof}

\vspace{4mm}

\begin{proof}[Proof of Lemma \ref{Lowerboundbysubproblem}]
First,
note that the following equivalence holds:
\begin{eqnarray*}
	Q_{\theta}\ll\widehat{Q}(\cdot;X) \text{ for all } \theta\in l_{2} \text{ if and only if } Q_{0}\ll\widehat{Q}(\cdot;X),
\end{eqnarray*}
for $P_{\theta}$-almost all $X$.
This is because two Gaussian measures $Q_{\theta}$ and $Q_{0}$ are mutually absolutely continuous if and only if $\theta\in l_{2}$.

Second,
from the above equivalence,
we have the following lower bound of the minimax risk:
for any $d\in\mathbb{N}$,
\begin{align*}
\mathop{\inf}_{\widehat{Q}\in\mathcal{D}}
&\mathop{\sup}_{\theta\in\Theta(a,B)}R(\theta,\widehat{Q})
\nonumber\\
&=
\mathop{\inf}_{\begin{subarray}{c}\widehat{Q}\in\mathcal{D}:\\ Q_{0}\ll\widehat{Q}(\cdot;\cdot)\end{subarray}}
	\mathop{\sup}_{\theta\in\Theta(a,B)}R(\theta,\widehat{Q})
\nonumber\\
&\geq
\mathop{\inf}_{\begin{subarray}{c}\widehat{Q}\in\mathcal{D}:\\ Q_{0}\ll\widehat{Q}(\cdot;\cdot)\end{subarray}}
\mathop{\sup}_{\begin{subarray}{c}\theta^{(d)}\in \Theta^{(d)}(a,B):\\ \theta_{d+i}=0\,\mathrm{for}\, i\in\mathbb{N}\end{subarray}}
R(\theta,\widehat{Q})
\nonumber\\
&=
\mathop{\inf}_{\begin{subarray}{c}\widehat{Q}\in\mathcal{D}:\\ Q_{0}\ll\widehat{Q}(\cdot;\cdot)\end{subarray}}
\mathop{\sup}_{\begin{subarray}{c}\theta^{(d)}\in \Theta^{(d)}(a,B):\\ \theta_{d+i}=0\,\mathrm{for}\, i\in\mathbb{N}\end{subarray}}
	\mathrm{E}_{X,Y|(\theta^{(d)},0,\ldots)}\left[\log\frac{\mathrm{d}Q_{(\theta^{(d)},0,\ldots)}}{\mathrm{d}\widehat{Q}(\cdot;X=x)}(y)\right],
\end{align*}
where we denote $\theta$ with $\theta_{i}=0$ for $i\geq d+1$ by $(\theta^{(d)},0,\ldots)$.

Third,
we further bound the previous lower bound.
Hereafter,
we fix $d\in\mathbb{N}$.
To do this,
we consider the decomposition of the density $\mathrm{d}Q_{(\theta^{(d)},0,\ldots)}/\mathrm{d}\widehat{Q}(\cdot;X=x)$ with respect to $\widehat{Q}(\cdot;X=x)$ as follows.
Let $\pi^{(d)}:\mathbb{R}^{\infty}\to\mathbb{R}^{d}$ be the projection $\pi^{(d)}(x)=(x_{1},\ldots,x_{d})$.
The projection $\pi^{(d)}$
induces a marginal probability measure $\widehat{Q}^{(d)}(\cdot;X=x)$ on $(\mathbb{R}^{d},\mathcal{R}^{d})$ and
a conditional probability measure $\widehat{Q}(\cdot|\pi^{(d)}(Y);X=x)$ on $(\mathbb{R}^{\infty},\mathcal{R}^{\infty})$ such that
for any measurable set $A\in \mathcal{R}^{\infty}$,
\begin{eqnarray*}
	\widehat{Q}(A;X=x)=\int \widehat{Q}(A|\pi^{(d)}(Y)=y^{(d)};X=x)\widehat{Q}^{(d)}(\mathrm{d}y^{(d)};X=x).
\end{eqnarray*}
In accordance with the decomposition of $\widehat{Q}(\cdot;X=x)$ 
into two probability measures $\widehat{Q}^{(d)}(\cdot;X=x)$ and $\widehat{Q}(\cdot|\pi^{(d)}(Y);X=x)$,
we decompose the density $\mathrm{d}Q_{(\theta^{(d)},0,\ldots)}/\mathrm{d}\widehat{Q}(\cdot;X=x)$ with respect to $\widehat{Q}(\cdot;X=x)$ as
\begin{eqnarray*}
\frac{\mathrm{d}Q_{(\theta^{(d)},0,\ldots)}}{\mathrm{d}\widehat{Q}(\cdot;X=x)}(y)
&=&\frac{\mathrm{d}Q^{(d)}_{\theta^{(d)}}}{\mathrm{d}\widehat{Q}^{(d)}(\cdot;X=x)}(y^{(d)})
\frac{\mathrm{d}Q_{(\theta^{(d)},0,\ldots)}(\cdot|\pi^{(d)}(Y)=y^{(d)})}{\mathrm{d}\widehat{Q}(\cdot|\pi^{(d)}(Y)=y^{(d)};X=x)}(y)
\end{eqnarray*}
for almost all $y$.
See p.119 in \citet{Pollard(2002)} for the decomposition.
From the decomposition of the density and from Jensen's inequality,
for $\theta^{(d)}\in\Theta^{(d)}(a,B)$ and for $\widehat{Q}\in\mathcal{D}$ such that $Q_{0}\ll \widehat{Q}(\cdot;X)$,
we have
\begin{align*}
&\mathrm{E}_{X,Y|(\theta^{(d)},0,\ldots)}\left[\log\frac{\mathrm{d}Q_{(\theta^{(d)},0,\ldots)}}{\mathrm{d}\widehat{Q}(\cdot;X=x)}(y)\right]
\nonumber\\
&=
\mathrm{E}_{X,Y|(\theta^{(d)},0,\ldots)}\left[
	\log\frac{\mathrm{d}Q^{(d)}_{\theta^{(d)}}}{\mathrm{d}\widehat{Q}^{(d)}(\cdot;X=x)}(y^{(d)})
+\log\frac{\mathrm{d}Q_{(\theta^{(d)},0,\ldots)}(\cdot|\pi^{(d)}(Y)=y^{(d)})}{\mathrm{d}\widehat{Q}(\cdot|\pi^{(d)}(Y)=y^{(d)};X=x)}
\right]
\nonumber\\
&\geq
\mathrm{E}_{X,Y|(\theta^{(d)},0,\ldots)}\left[
	\log\frac{\mathrm{d}Q^{(d)}_{\theta^{(d)}}}{\mathrm{d}\widehat{Q}^{(d)}(\cdot;X=x)}(y^{(d)})
\right].
\end{align*}
We denote the probability measure obtained by taking the conditional expectation of $\widehat{Q}^{(d)}(\cdot;X=x)$
conditioned by $\pi^{(d)}(X)$
under $P_{(\theta^{(d)},0,\ldots)}$
by $\widehat{Q}^{(d)} (\cdot ; \pi^{(d)} (X) = x^{(d)} )$,
because it does not depend on $\theta^{(d)}$ given $\pi^{(d)}(X)=x^{(d)}$.
By the definition of $\widehat{Q}^{(d)}(\cdot;\pi^{(d)}(X)=x^{(d)})$,
\begin{eqnarray*}
	1=\mathrm{E}_{X,Y|(\theta^{(d)},0,\ldots)}
	\left[\frac{\mathrm{d}\widehat{Q}^{(d)}(\cdot;X=x)}{\mathrm{d}\widehat{Q}^{(d)}(\cdot;\pi^{(d)}(X)=x^{(d)})}\bigg{|}\pi^{(d)}(X)=x^{(d)}\right]
	\text{ $P^{(d)}_{\theta^{(d)}}$-a.s.}.
\end{eqnarray*}
By Jensen's inequality and by the above equality,
\begin{align*}
	\mathrm{E}_{X,Y|(\theta^{(d)},0,\ldots)}&\left[-\log \frac{\mathrm{d}\widehat{Q}^{(d)}(\cdot;X=x)}{\mathrm{d}\widehat{Q}^{(d)}(\cdot;\pi^{(d)}(X)=x^{(d)})}\right]
	\nonumber\\
	&\geq
	-\log \mathrm{E}_{X,Y|(\theta^{(d)},0,\ldots)}\left[\frac{\mathrm{d}\widehat{Q}^{(d)}(\cdot;X=x)}{\mathrm{d}\widehat{Q}^{(d)}(\cdot;\pi^{(d)}(X)=x^{(d)})}\right]=0.
\end{align*}
Thus,
\begin{align*}
&\mathrm{E}_{X,Y|(\theta^{(d)},0,\ldots)}\left[
	\log\frac{\mathrm{d}Q^{(d)}_{\theta^{(d)}}}{\mathrm{d}\widehat{Q}^{(d)}(\cdot;X=x)}(y^{(d)})
\right]
\nonumber\\
&=
\mathrm{E}_{X,Y|(\theta^{(d)},0,\ldots)}\left[
	\log\frac{\mathrm{d}Q^{(d)}_{\theta^{(d)}}}{\mathrm{d}\widehat{Q}^{(d)}(\cdot;\pi^{(d)}(X)=x^{(d)})}(y^{(d)})
\right]
\nonumber\\
&\quad+
\mathrm{E}_{X,Y|(\theta^{(d)},0,\ldots)}\left[
	\log\frac{\mathrm{d}\widehat{Q}^{(d)}(\cdot;\pi^{(d)}(X)=x^{(d)})}{\mathrm{d}\widehat{Q}^{(d)}(\cdot;X=x)}(y^{(d)})
\right]
\nonumber\\
&\geq
\mathrm{E}_{\pi^{(d)}(X),\pi^{(d)}(Y)|\theta^{(d)}}\left[
	\log\frac{\mathrm{d}Q^{(d)}_{\theta^{(d)}}}{\mathrm{d}\widehat{Q}^{(d)}(\cdot;\pi^{(d)}(X)=x^{(d)})}(y^{(d)})
\right],
\end{align*}
where $\mathrm{E}_{\pi^{(d)}(X),\pi^{(n)}(Y)|\theta^{(d)}}$ is the expectation of $\pi^{(d)}(X)$ and $\pi^{(d)}(Y)$
with respect to $P^{(d)}_{\theta^{(d)}}$ and $Q^{(d)}_{\theta^{(d)}}$.
Hence,
\begin{align*}
	&\mathop{\inf}_{\widehat{Q}\in \mathcal{D}}\mathop{\sup}_{\theta\in\Theta(a,B)} R(\theta,\widehat{Q})
\nonumber\\
&\geq
\mathop{\inf}_{\widehat{Q}^{(d)}\in\mathcal{D}^{(d)}}
\mathop{\sup}_{\theta^{(d)}\in \Theta^{(d)}(a,B)}
\mathrm{E}_{\pi^{(d)}(X),\pi^{(d)}(Y)|\theta^{(d)}}\left[
	\log\frac{\mathrm{d}Q_{\theta^{(d)}}}{\mathrm{d}\widehat{Q}^{(d)}(\cdot;\pi^{(d)}(X)=x^{(d)})}
\right].
\end{align*}
\end{proof}

\section{Proofs of Lemmas in Section 4}\label{Appendix:ProofofSection4}

\begin{proof}[Proof of Lemma \ref{Oracle_blockwiseStein}]
From Lemma \ref{GaussianMeasures},
the Kullback--Leibler risk of $Q_{h^{(d)}_{\mathcal{B}(d)}}$ is given by
\begin{eqnarray*}
	R\left(\theta,Q_{h^{(d)}_{\mathcal{B}(d)}}\right)
	=\mathop{\sum}_{j=1}^{J} R_{b_{j}}\left(\theta_{B_{j}},Q^{(b_{j})}_{h^{(b_{j})}}\right)
	+\mathop{\sum}_{i=d+1}^{\infty}\frac{\theta_{i}^{2}}{\tilde{\varepsilon}^{2}},
\end{eqnarray*}
where for $j\in\{1,\ldots,J\}$,
$Q^{(b_{j})}_{h^{(b_{j})}}(\cdot|X_{B_{j}})$ is the Bayesian predictive distribution on $\mathbb{R}^{b_{j}}$
based on Stein's prior $h^{(b_{j})}(\theta_{B_{j}})$.
For the block $B_{j}$ with $b_{j}>2$, inequality (\ref{oracleineq}) holds.
For the block $B_{j}$ with $b_{j}\geq 2$,
the inequality
\begin{eqnarray*}
\frac{b_{j}}{2}\log\left(
\frac{1+(||\theta_{B_{j}}||^{2}/b_{j})/v^{2}_{\varepsilon,\tilde{\varepsilon}}}
{1+(||\theta_{B_{j}}||^{2}/b_{j})/v^{2}_{\varepsilon}}
\right)
\leq
\log\left(
\frac{v^{2}_{\varepsilon}}{v^{2}_{\varepsilon,\tilde{\varepsilon}}}
\right)
\end{eqnarray*}
holds
because the left hand side in the above inequality is monotone increasing with respect to $||\theta_{B_{j}}||$.
Thus,
\begin{eqnarray*}
	R(\theta,Q_{h^{(d)}_{\mathcal{B}(d)}})
\leq
J\log\left(
	\frac{v^{2}_{\varepsilon}}{v^{2}_{\varepsilon,\tilde{\varepsilon}}}
\right)
+\mathop{\sum}_{i=1}^{J}
\frac{b_{i}}{2}\log\left(
	\frac{1+(||\theta_{B_{i}}||^{2}/b_{i})/v^{2}_{\varepsilon,\tilde{\varepsilon}}}
	{1+(||\theta_{B_{i}}||^{2}/b_{i})/v^{2}_{\varepsilon}}
\right)
+\mathop{\sum}_{i=d+1}^{\infty}\frac{\theta^{2}_{i}}{\tilde{\varepsilon}^{2}}.
\end{eqnarray*}

From the same calculation as (\ref{Min_tau}) in Lemma \ref{LinearMinimax},
\begin{eqnarray*}
	\mathop{\inf}_{\widehat{Q}\in \mathcal{G}_{\mathrm{BW}}(\mathcal{B}(d))}
	R(\theta,\widehat{Q})=\mathop{\sum}_{i=1}^{J}
\frac{b_{i}}{2}\log\left(
\frac{1+(||\theta_{B_{i}}||^{2}/b_{i})/v^{2}_{\varepsilon,\tilde{\varepsilon}}}
	{1+(||\theta_{B_{i}}||^{2}/b_{i})/v^{2}_{\varepsilon}}
\right)
+\mathop{\sum}_{i=d+1}^{\infty}\frac{\theta^{2}_{i}}{\tilde{\varepsilon}^{2}}.
\end{eqnarray*}
Thus,
\begin{eqnarray}
	R\left(\theta,Q_{h^{(d)}_{\mathcal{B}(d)}}\right)
\leq
J\log\left(\frac{v^{2}_{\varepsilon}}{v^{2}_{\varepsilon,\tilde{\varepsilon}}}\right)
+\mathop{\inf}_{\widehat{Q}\in \mathcal{G}_{\mathrm{BW}}(\mathcal{B}(d))}R(\theta,\widehat{Q}).
\label{BSteinoracle}
\end{eqnarray}
Combining inequality (\ref{BSteinoracle}) with Lemma \ref{MimicMonotoneclass} yields (\ref{oracleineq_blockwiseStein}).
\end{proof}

\vspace{4mm}

\begin{proof}[Proof of Lemma \ref{MimicMonotoneclass}]
	It suffices to show that for any $\tau^{(d)}\in \mathcal{T}_{\mathrm{mon}}$,
	there exists $\bar{\tau}^{(d)}\in \mathcal{T}_{\mathrm{BW}}$
such that
\begin{eqnarray}
	R_{d}\left(\theta^{(d)},Q^{(d)}_{\mathrm{G}_{\bar{\tau}^{(d)}}}\right)
	\leq
	(1+\eta)R_{d}(\theta^{(d)},Q^{(d)}_{\mathrm{G}_{\tau^{(d)}}})
	+\frac{b_{1}}{2}\log\left(\frac{v^{2}_{\varepsilon}}{v^{2}_{\varepsilon,\tilde{\varepsilon}}}\right),
\label{KL_upper_G_bar}
\end{eqnarray}
where $Q^{(d)}_{\mathrm{G}_{\tau^{(d)}}}$ 
and $Q^{(d)}_{\mathrm{G}_{\bar{\tau}^{(d)}}}$
are Bayesian predictive distributions on $\mathbb{R}^{d}$
based on $\mathrm{G}_{\tau^{(d)}}:=\otimes_{i=1}^{d}\mathcal{N}(0,\tau_{i}^{2})$ and 
$\mathrm{G}_{\bar{\tau}^{(d)}}:=\otimes_{i=1}^{d}\mathcal{N}(0,\bar{\tau}_{i}^{2})$,
respectively.

For any $\tau^{(d)}\in \mathcal{T}_{\mathrm{mon}}$,
we define $\bar{\tau}^{(d)}=(\bar{\tau}_{1},\ldots,\bar{\tau}_{d})$ as
\begin{eqnarray}
\bar{\tau}_{i}=\left\{
	\begin{array}{l}
		\tau_{1} \text{ for } i\in B_{1}, \\
		\tau_{b_{1}+1} \text{ for } i\in B_{2}, \\
	\cdots \\
	        \tau_{b_{1}+\cdots+b_{J-1}+1} \text{ for } i\in B_{J}.
	\end{array}
	\right.
\end{eqnarray}
For $j\in\{1,\ldots,J\}$ and for $i\in B_{j}$,
let $\tau_{(j)}$ be $\bar{\tau}_{i}$
because $\bar{\tau}_{i}$ does not change in the same block $B_{j}$.
Then,
\begin{align}
	R_{d}&\left(\theta^{(d)},Q^{(d)}_{\mathrm{G}_{\bar{\tau}^{(d)}}}\right)
\nonumber\\
&=\mathop{\sum}_{i=1}^{d}\left[
\frac{1}{2}\log\left(\frac{1+\bar{\tau}_{i}^{2}/v^{2}_{\varepsilon,\tilde{\varepsilon}}}
{1+\bar{\tau}_{i}^{2}/v^{2}_{\varepsilon}}\right)
-\frac{1}{2}\frac{(v^{2}_{\varepsilon}-v^{2}_{\varepsilon,\tilde{\varepsilon}})\bar{\tau}^{2}_{i}}{(v^{2}_{\varepsilon,\tilde{\varepsilon}}+\bar{\tau}_{i}^{2})(v^{2}_{\varepsilon}+\bar{\tau}_{i}^{2})}
\right]
\nonumber\\
&\quad+
\mathop{\sum}_{i=1}^{d}\left[
\frac{1}{2}\frac{(v^{2}_{\varepsilon}-v^{2}_{\varepsilon,\tilde{\varepsilon}})\theta^{2}_{i}}{(v^{2}_{\varepsilon,\tilde{\varepsilon}}+\bar{\tau}_{i}^{2})(v^{2}_{\varepsilon}+\bar{\tau}_{i}^{2})}
\right].
\label{KL_mon}
\end{align}
From the inequality that $\tau_{i}\leq\bar{\tau}_{i}$ for $i\in\{1,\ldots,d\}$,
the second term in (\ref{KL_mon}) is bounded above as follows:
\begin{eqnarray*}
\mathop{\sum}_{i=1}^{d}\left[
\frac{1}{2}\frac{(v^{2}_{\varepsilon}-v^{2}_{\varepsilon,\tilde{\varepsilon}})\theta^{2}_{i}}{(v^{2}_{\varepsilon,\tilde{\varepsilon}}+\bar{\tau}_{i}^{2})(v^{2}_{\varepsilon}+\bar{\tau}_{i}^{2})}
\right]
&\leq&
\mathop{\sum}_{i=1}^{d}\left[
	\frac{1}{2}\frac{(v^{2}_{\varepsilon}-v^{2}_{\varepsilon,\tilde{\varepsilon}})\theta^{2}_{i}}{(v^{2}_{\varepsilon,\tilde{\varepsilon}}+\tau_{i}^{2})(v^{2}_{\varepsilon}+\tau_{i}^{2})}
\right].
\end{eqnarray*}
Thus,
\begin{align}
	R_{d}&\left(\theta^{(d)},Q^{(d)}_{\mathrm{G}_{\bar{\tau}^{(d)}}}\right)
\nonumber\\
&\leq
\mathop{\sum}_{i=1}^{d}\left[
	\frac{1}{2}\log\left(\frac{1+\bar{\tau}_{i}^{2}/v^{2}_{\varepsilon,\tilde{\varepsilon}}}{1+\bar{\tau}_{i}^{2}/v^{2}_{\varepsilon}}\right)
-\frac{1}{2}\frac{(v^{2}_{\varepsilon}-v^{2}_{\varepsilon,\tilde{\varepsilon}})\bar{\tau}^{2}_{i}}{(v^{2}_{\varepsilon,\tilde{\varepsilon}}+\bar{\tau}_{i}^{2})(v^{2}_{\varepsilon}+\bar{\tau}_{i}^{2})}\right]
\nonumber\\
&\quad+
\mathop{\sum}_{i=1}^{d}\left[
	\frac{1}{2}\frac{(v^{2}_{\varepsilon}-v^{2}_{\varepsilon,\tilde{\varepsilon}})\theta^{2}_{i}}{(v^{2}_{\varepsilon,\tilde{\varepsilon}}+\tau_{i}^{2})(v^{2}_{\varepsilon}+\tau_{i}^{2})}
\right].
\label{KL_mon_upper}
\end{align}
For the first term on the right hand side of (\ref{KL_mon_upper}),
from the definition of $\bar{\tau}^{(d)}$,
\begin{align}
	\mathop{\sum}_{i=1}^{d}&\left[
	\frac{1}{2}\log\left(\frac{1+\bar{\tau}_{i}^{2}/v^{2}_{\varepsilon,\tilde{\varepsilon}}}{1+\bar{\tau}_{i}^{2}/v^{2}_{\varepsilon}}\right)
-\frac{1}{2}\frac{(v^{2}_{\varepsilon}-v^{2}_{\varepsilon,\tilde{\varepsilon}})\bar{\tau}^{2}_{i}}{(v^{2}_{\varepsilon,\tilde{\varepsilon}}+\bar{\tau}_{i}^{2})(v^{2}_{\varepsilon}+\bar{\tau}_{i}^{2})}\right]
\nonumber\\
&=\mathop{\sum}_{j=1}^{J}\mathop{\sum}_{m\in B_{j}}\left[
\frac{1}{2}\log\left(\frac{1+\tau_{(j)}^{2}/v^{2}_{\varepsilon,\tilde{\varepsilon}}}{1+\tau_{(j)}^{2}/v^{2}_{\varepsilon}}\right)
-\frac{1}{2}\frac{(v^{2}_{\varepsilon}-v^{2}_{\varepsilon,\tilde{\varepsilon}})\tau^{2}_{(j)}}{(v^{2}_{\varepsilon,\tilde{\varepsilon}}+\tau_{(j)}^{2})(v^{2}_{\varepsilon}+\tau_{(j)}^{2})}\right].
\label{KL_mon_upper_equiv}
\end{align}
Note that the function $f(x)$ defined by
\begin{eqnarray*}
f(x)=\log\left(\frac{1+x/v^{2}_{\varepsilon,\tilde{\varepsilon}}}{1+x/v^{2}_{\varepsilon}}\right)
-\frac{(v^{2}_{\varepsilon}-v^{2}_{\varepsilon,\tilde{\varepsilon}})x}{(v^{2}_{\varepsilon}+x)(v^{2}_{\varepsilon,\tilde{\varepsilon}}+x)}
\end{eqnarray*}
is monotone increasing with respect to $x\in [0,\infty)$
and then $f(x)\leq \log(v^{2}_{\varepsilon}/v^{2}_{\varepsilon,\tilde{\varepsilon}})$
because 
\begin{eqnarray*}
f'(x)=(v^{2}_{\varepsilon}-v^{2}_{\varepsilon,\tilde{\varepsilon}})(v^{2}_{\varepsilon}+v^{2}_{\varepsilon,\tilde{\varepsilon}})
\frac{x}{(x+v^{2}_{\varepsilon})(x+v^{2}_{\varepsilon,\tilde{\varepsilon}})}\geq 0.
\end{eqnarray*}
Since $f(x)\leq \log(v^{2}_{\varepsilon}/v^{2}_{\varepsilon,\tilde{\varepsilon}})$ for $x\in[0,\infty)$,
\begin{eqnarray*}
\mathop{\sum}_{m\in B_{1}}\left[
\frac{1}{2}\log\left(
\frac{1+\tau_{(1)}^{2}/v^{2}_{\varepsilon,\tilde{\varepsilon}}}{1+\tau_{(1)}^{2}/v^{2}_{\varepsilon}}\right)
-\frac{1}{2}\frac{(v^{2}_{\varepsilon}-v^{2}_{\varepsilon,\tilde{\varepsilon}})\tau^{2}_{(1)}}{(v^{2}_{\varepsilon,\tilde{\varepsilon}}+\tau_{(1)}^{2})(v^{2}_{\varepsilon}+\tau_{(1)}^{2})}\right]
&\leq&
\frac{b_{1}}{2}\log \left(\frac{v^{2}_{\varepsilon}}{v^{2}_{\varepsilon,\tilde{\varepsilon}}}\right).
\end{eqnarray*}
Thus,
the first term in (\ref{KL_mon_upper_equiv}) is bounded as follows:
\begin{align}
	\mathop{\sum}_{j=1}^{J}&\mathop{\sum}_{m\in B_{j}}\left[
\frac{1}{2}\log\left(
\frac{1+\tau_{(j)}^{2}/v^{2}_{\varepsilon,\tilde{\varepsilon}}}{1+\tau_{(j)}^{2}/v^{2}_{\varepsilon}}\right)
-\frac{1}{2}\frac{(v^{2}_{\varepsilon}-v^{2}_{\varepsilon,\tilde{\varepsilon}})\tau^{2}_{(j)}}{(v^{2}_{\varepsilon,\tilde{\varepsilon}}+\tau_{(j)}^{2})(v^{2}_{\varepsilon}+\tau_{(j)}^{2})}\right]
\nonumber\\
&\leq
\frac{b_{1}}{2}\log\left(\frac{v^{2}_{\varepsilon}}{v^{2}_{\varepsilon,\tilde{\varepsilon}}}\right)
\nonumber\\
&\quad+
\mathop{\sum}_{j=2}^{J}\mathop{\sum}_{m\in B_{j}}\left[
	\frac{1}{2}\log\left(
	\frac{1+\tau_{(j)}^{2}/v^{2}_{\varepsilon,\tilde{\varepsilon}}}{1+\tau_{(j)}^{2}/v^{2}_{\varepsilon}}\right)
-\frac{1}{2}\frac{(v^{2}_{\varepsilon}-v^{2}_{\varepsilon,\tilde{\varepsilon}})\tau^{2}_{(j)}}{(v^{2}_{\varepsilon,\tilde{\varepsilon}}+\tau_{(j)}^{2})(v^{2}_{\varepsilon}+\tau_{(j)}^{2})}\right]
\nonumber\\
&=
\frac{b_{1}}{2}\log\left(\frac{v^{2}_{\varepsilon}}{v^{2}_{\varepsilon,\tilde{\varepsilon}}}\right)
\nonumber\\
&\quad+
\mathop{\sum}_{j=2}^{J}B_{j}\left[
\frac{1}{2}\log\left(\frac{1+\tau_{(j)}^{2}/v^{2}_{\varepsilon,\tilde{\varepsilon}}}{1+\tau_{(j)}^{2}/v^{2}_{\varepsilon}}\right)
-\frac{1}{2}\frac{(v^{2}_{\varepsilon}-v^{2}_{\varepsilon,\tilde{\varepsilon}})\tau^{2}_{(j)}}{(v^{2}_{\varepsilon,\tilde{\varepsilon}}+\tau_{(j)}^{2})(v^{2}_{\varepsilon}+\tau_{(j)}^{2})}\right].
\label{KL_first_mon_upper}
\end{align}
From the assumption on the cardinalities of the blocks
and
from the inequality that 
$f(\tau_{(j)})\leq f(\tau_{i})$ for $i\in B_{j-1}$,
\begin{align}
\mathop{\sum}_{j=2}^{J}&B_{j}\left[
\frac{1}{2}\log\left(\frac{1+\tau_{(j)}^{2}/v^{2}_{\varepsilon,\tilde{\varepsilon}}}{1+\tau_{(j)}^{2}/v^{2}_{\varepsilon}}\right)
-\frac{1}{2}\frac{(v^{2}_{\varepsilon}-v^{2}_{\varepsilon,\tilde{\varepsilon}})\tau^{2}_{(j)}}{(v^{2}_{\varepsilon,\tilde{\varepsilon}}+\tau_{(j)}^{2})(v^{2}_{\varepsilon}+\tau_{(j)}^{2})}\right]
\nonumber\\
&\leq
(1+\eta)\mathop{\sum}_{j=2}^{J}B_{j-1}\left[
\frac{1}{2}\log\left(\frac{1+\tau_{(j)}^{2}/v^{2}_{\varepsilon,\tilde{\varepsilon}}}{1+\tau_{(j)}^{2}/v^{2}_{\varepsilon}}\right)
-\frac{1}{2}\frac{(v^{2}_{\varepsilon}-v^{2}_{\varepsilon,\tilde{\varepsilon}})\tau^{2}_{(j)}}{(v^{2}_{\varepsilon,\tilde{\varepsilon}}+\tau_{(j)}^{2})(v^{2}_{\varepsilon}+\tau_{(j)}^{2})}\right]
\nonumber\\
&\leq
(1+\eta)\mathop{\sum}_{j=2}^{J}\mathop{\sum}_{m\in B_{j-1}}\left[
	\frac{1}{2}\log\left(\frac{1+\tau_{m}^{2}/v^{2}_{\varepsilon,\tilde{\varepsilon}}}{1+\tau_{m}^{2}/v^{2}_{\varepsilon}}\right)
-\frac{1}{2}\frac{(v^{2}_{\varepsilon}-v^{2}_{\varepsilon,\tilde{\varepsilon}})\tau^{2}_{m}}{(v^{2}_{\varepsilon,\tilde{\varepsilon}}+\tau_{m}^{2})(v^{2}_{\varepsilon}+\tau_{m}^{2})}\right]
\nonumber\\
&=
(1+\eta)\mathop{\sum}_{i=1}^{d}\left[
	\frac{1}{2}\log\left(\frac{1+\tau_{i}^{2}/v^{2}_{\varepsilon,\tilde{\varepsilon}}}{1+\tau_{i}^{2}/v^{2}_{\varepsilon}}\right)
-\frac{1}{2}\frac{(v^{2}_{\varepsilon}-v^{2}_{\varepsilon,\tilde{\varepsilon}})\tau^{2}_{i}}{(v^{2}_{\varepsilon,\tilde{\varepsilon}}+\tau_{i}^{2})(v^{2}_{\varepsilon}+\tau_{i}^{2})}\right].
\label{KL_first_mon_upper_upper}
\end{align}
From (\ref{KL_mon_upper}), (\ref{KL_first_mon_upper}), and (\ref{KL_first_mon_upper_upper}),
we obtain (\ref{KL_upper_G_bar}).
\end{proof}

\bibliographystyle{imsart-nameyear}
\bibliography{Yano_Pinsker}

\begin{thebibliography}{21}

\bibitem[\protect\citeauthoryear{Belitser and
  Ghosal}{2003}]{BelitserandGhosal(2003)}
\begin{barticle}[author]
\bauthor{\bsnm{Belitser},~\bfnm{E.}\binits{E.}} \AND
  \bauthor{\bsnm{Ghosal},~\bfnm{S.}\binits{S.}}
(\byear{2003}).
\btitle{Adaptive {B}ayesian inference of the mean of an inifinite-dimensional
  normal distribution}.
\bjournal{Ann. Statist.}
\bvolume{{\bf 31}}
\bpages{\,pp. 536--559}.
\end{barticle}
\endbibitem

\bibitem[\protect\citeauthoryear{Brown, George and
  Xu}{2008}]{BrownGeorgeXu(2008)}
\begin{barticle}[author]
\bauthor{\bsnm{Brown},~\bfnm{L.}\binits{L.}},
  \bauthor{\bsnm{George},~\bfnm{E.}\binits{E.}} \AND
  \bauthor{\bsnm{Xu},~\bfnm{X.}\binits{X.}}
(\byear{2008}).
\btitle{Admissible predictive density estimation}.
\bjournal{Ann. Statist.}
\bvolume{{\bf 36}}
\bpages{pp. 1156--1170}.
\end{barticle}
\endbibitem

\bibitem[\protect\citeauthoryear{Brown and Zhao}{2009}]{BrownandZhao(2009)}
\begin{barticle}[author]
\bauthor{\bsnm{Brown},~\bfnm{L.}\binits{L.}} \AND
  \bauthor{\bsnm{Zhao},~\bfnm{L.}\binits{L.}}
(\byear{2009}).
\btitle{Estimators for {G}aussian modles having a block-wise structure}.
\bjournal{Statist. Sinica}
\bvolume{{\bf 19}}
\bpages{pp. 885--903}.
\end{barticle}
\endbibitem

\bibitem[\protect\citeauthoryear{Cai, Low and Zhao}{2000}]{Caietal(2000)}
\begin{btechreport}[author]
\bauthor{\bsnm{Cai},~\bfnm{T.}\binits{T.}},
  \bauthor{\bsnm{Low},~\bfnm{M.}\binits{M.}} \AND
  \bauthor{\bsnm{Zhao},~\bfnm{L.}\binits{L.}}
(\byear{2000}).
\btitle{Sharp adaptive estimation by a blockwise method}
\btype{Technical Report},
\bpublisher{Warton School, University of Pennsylvania, Philadelphia}.
\end{btechreport}
\endbibitem

\bibitem[\protect\citeauthoryear{Cavalier and
  Tsybakov}{2001}]{CavalierandTsybakov(2001)}
\begin{barticle}[author]
\bauthor{\bsnm{Cavalier},~\bfnm{L.}\binits{L.}} \AND
  \bauthor{\bsnm{Tsybakov},~\bfnm{A.}\binits{A.}}
(\byear{2001}).
\btitle{Penalized blockwise Stein's method, monotone oracles and sharp adaptive
  estimation}.
\bjournal{Math. Methods of Statist.}
\bvolume{{\bf 10}}
\bpages{pp. 247--282}.
\end{barticle}
\endbibitem

\bibitem[\protect\citeauthoryear{Dudley}{2002}]{Dudley_RAP}
\begin{bbook}[author]
\bauthor{\bsnm{Dudley},~\bfnm{R.}\binits{R.}}
(\byear{2002}).
\btitle{Real analysis and Probability},
\bedition{2nd} ed.
\bpublisher{Cambridge University Press}.
\end{bbook}
\endbibitem

\bibitem[\protect\citeauthoryear{Efromovich and
  Pinsker}{1984}]{EfromovichandPinsker(1984)}
\begin{barticle}[author]
\bauthor{\bsnm{Efromovich},~\bfnm{S.}\binits{S.}} \AND
  \bauthor{\bsnm{Pinsker},~\bfnm{M.}\binits{M.}}
(\byear{1984}).
\btitle{Learning algorithm for nonparmetric filtering}.
\bjournal{Automation and Remote Control}
\bvolume{{\bf 11}}
\bpages{pp.1434--1440}.
\end{barticle}
\endbibitem

\bibitem[\protect\citeauthoryear{George, Liang and
  Xu}{2006}]{GeorgeLiangXu(2006)}
\begin{barticle}[author]
\bauthor{\bsnm{George},~\bfnm{E.}\binits{E.}},
  \bauthor{\bsnm{Liang},~\bfnm{F.}\binits{F.}} \AND
  \bauthor{\bsnm{Xu},~\bfnm{X.}\binits{X.}}
(\byear{2006}).
\btitle{Improved minimax predictive densities under Kullback--Leibler loss}.
\bjournal{Ann. Statist.}
\bvolume{{\bf 34}}
\bpages{pp. 78--91}.
\end{barticle}
\endbibitem

\bibitem[\protect\citeauthoryear{Komaki}{2001}]{Komaki(2001)}
\begin{barticle}[author]
\bauthor{\bsnm{Komaki},~\bfnm{F.}\binits{F.}}
(\byear{2001}).
\btitle{A shrinkage predictive distribution for multivariate normal
  observations}.
\bjournal{Biometrika}
\bvolume{{\bf 88}}
\bpages{pp. 859--864}.
\end{barticle}
\endbibitem

\bibitem[\protect\citeauthoryear{Kubokawa}{1991}]{Kubokawa(1991)}
\begin{barticle}[author]
\bauthor{\bsnm{Kubokawa},~\bfnm{T.}\binits{T.}}
(\byear{1991}).
\btitle{An approach to improving the {J}ames--{S}tein estimator}.
\bjournal{J. Multi. Anal.}
\bvolume{{\bf 36}}
\bpages{pp. 121--126}.
\end{barticle}
\endbibitem

\bibitem[\protect\citeauthoryear{Mandelbaum}{1984}]{Mandelbaum(1984)}
\begin{barticle}[author]
\bauthor{\bsnm{Mandelbaum},~\bfnm{A.}\binits{A.}}
(\byear{1984}).
\btitle{All admissible linear estimators of the mean of a {G}aussian
  distribution of a {H}ilbert space}.
\bjournal{Ann. Statist.}
\bvolume{{\bf 12}}
\bpages{pp. 1448--1466}.
\end{barticle}
\endbibitem

\bibitem[\protect\citeauthoryear{Mukherjee and
  Johnstone}{2015}]{MukherjeeandJohnstone(2015)}
\begin{barticle}[author]
\bauthor{\bsnm{Mukherjee},~\bfnm{G.}\binits{G.}} \AND
  \bauthor{\bsnm{Johnstone},~\bfnm{I.}\binits{I.}}
(\byear{2015}).
\btitle{Exact minimax estimation of the predictive density in sparse Gaussian
  models}.
\bjournal{Ann. Statist.}
\bvolume{{\bf 43}}
\bpages{pp. 937--961}.
\end{barticle}
\endbibitem

\bibitem[\protect\citeauthoryear{Philippe}{1997}]{Philippe(1997)}
\begin{barticle}[author]
\bauthor{\bsnm{Philippe},~\bfnm{A.}\binits{A.}}
(\byear{1997}).
\btitle{Simulation of right and left truncated gamma distributions by
  mixtures}.
\bjournal{Statist. and Comput.}
\bvolume{{\bf 7}}
\bpages{pp. 173--181}.
\end{barticle}
\endbibitem

\bibitem[\protect\citeauthoryear{Pinsker}{1980}]{Pinsker(1980)}
\begin{barticle}[author]
\bauthor{\bsnm{Pinsker},~\bfnm{M.}\binits{M.}}
(\byear{1980}).
\btitle{Optimal filtering of square integrable signals in {G}aussian white
  noise}.
\bjournal{Problems Inform. Transmission}
\bvolume{{\bf 16}}
\bpages{\,120--133}.
\end{barticle}
\endbibitem

\bibitem[\protect\citeauthoryear{Pollard}{2002}]{Pollard(2002)}
\begin{bbook}[author]
\bauthor{\bsnm{Pollard},~\bfnm{D.}\binits{D.}}
(\byear{2002}).
\btitle{A User's Guide to Measure Theoretic Probability}.
\bpublisher{Cambridge University Press}.
\end{bbook}
\endbibitem

\bibitem[\protect\citeauthoryear{Tsybakov}{2009}]{Tsybakov(2009)}
\begin{bbook}[author]
\bauthor{\bsnm{Tsybakov},~\bfnm{A.}\binits{A.}}
(\byear{2009}).
\btitle{Introduction to Nonparametric Estimation}.
\bpublisher{Springer Science+Business Media}.
\end{bbook}
\endbibitem

\bibitem[\protect\citeauthoryear{Wasserman}{2007}]{Wasserman(2007)}
\begin{bbook}[author]
\bauthor{\bsnm{Wasserman},~\bfnm{L.}\binits{L.}}
(\byear{2007}).
\btitle{All of Nonparametric Statistics},
\bedition{3rd} ed.
\bpublisher{Springer}.
\end{bbook}
\endbibitem

\bibitem[\protect\citeauthoryear{Williams}{1991}]{Williams(1991)}
\begin{bbook}[author]
\bauthor{\bsnm{Williams},~\bfnm{D.}\binits{D.}}
(\byear{1991}).
\btitle{Probability with Martingale}.
\bpublisher{Cambridge University Press}.
\end{bbook}
\endbibitem

\bibitem[\protect\citeauthoryear{Xu and Liang}{2010}]{XuandLiang(2010)}
\begin{barticle}[author]
\bauthor{\bsnm{Xu},~\bfnm{X.}\binits{X.}} \AND
  \bauthor{\bsnm{Liang},~\bfnm{F.}\binits{F.}}
(\byear{2010}).
\btitle{Asymptotic minimax risk of predictive density estimation for
  non-parametric regression}.
\bjournal{Bernoulli}
\bvolume{{\bf 16}}
\bpages{\,pp. 543--560}.
\end{barticle}
\endbibitem

\bibitem[\protect\citeauthoryear{Xu and Zhou}{2011}]{XuandZhou(2011)}
\begin{barticle}[author]
\bauthor{\bsnm{Xu},~\bfnm{X.}\binits{X.}} \AND
  \bauthor{\bsnm{Zhou},~\bfnm{D.}\binits{D.}}
(\byear{2011}).
\btitle{Empirical {B}ayes predictive densities for high-dimensional normal
  models}.
\bjournal{J. Multi. Anal.}
\bvolume{{\bf 102}}
\bpages{pp. 1417--1428}.
\end{barticle}
\endbibitem

\bibitem[\protect\citeauthoryear{Zhao}{2000}]{Zhao(2000)}
\begin{barticle}[author]
\bauthor{\bsnm{Zhao},~\bfnm{L.}\binits{L.}}
(\byear{2000}).
\btitle{{B}ayesian aspects of some nonparametric problems}.
\bjournal{Ann. Statist.}
\bvolume{{\bf 28}}
\bpages{\,pp. 532--552}.
\end{barticle}
\endbibitem

\end{thebibliography}
\end{document}